\newtheorem{thm}{Theorem}[section]
\newtheorem{cor}[thm]{Corollary}
\newtheorem{lem}[thm]{Lemma}
\newtheorem{defin}[thm]{Definition}
\newtheorem{rem}[thm]{Remark}
\numberwithin{equation}{section}
\begin{document}

\begin{frontmatter}

%% Title, authors and addresses

%% use the tnoteref command within \title for footnotes;
%% use the tnotetext command for theassociated footnote;
%% use the fnref command within \author or \address for footnotes;
%% use the fntext command for theassociated footnote;
%% use the corref command within \author for corresponding author footnotes;
%% use the cortext command for theassociated footnote;
%% use the ead command for the email address,
%% and the form \ead[url] for the home page:
%% \title{Title\tnoteref{label1}}
%% \tnotetext[label1]{}
%% \author{Name\corref{cor1}\fnref{label2}}
%% \ead[url]{home page}
%% \fntext[label2]{}
%% \cortext[cor1]{}
%% \address{Address\fnref{label3}}
%% \fntext[label3]{}

\title{Weighted BMO-BLO estimates for Littlewood--Paley square operators}

%% use optional labels to link authors explicitly to addresses:
%% \author[label1,label2]{}
%% \address[label1]{}[Wang,Hua]

\author{Runzhe Zhang and Hua Wang}
\address{School of Mathematics and System Science, Xinjiang University,\\
Urumqi 830046, P. R. China\\
\textbf{Dedicated to the memory of Li Xue}}
\ead{wanghua@pku.edu.cn}

\begin{abstract}
Let $T(f)$ denote the Littlewood--Paley square operators, including the Littlewood--Paley $\mathcal{G}$-function $\mathcal{G}(f)$, Lusin's area integral $\mathcal{S}(f)$ and Stein's function $\mathcal{G}^{\ast}_{\lambda}(f)$ with $\lambda>2$. We establish the boundedness of Littlewood--Paley square operators on the weighted spaces $\mathrm{BMO}(\omega)$ with $\omega\in A_1$. The weighted space $\mathrm{BLO}(\omega)$ (the space of functions with bounded lower oscillation) is introduced and studied in this paper. This new space is a proper subspace of $\mathrm{BMO}(\omega)$. It is proved that if $T(f)(x_0)$ is finite for a single point $x_0\in\mathbb R^n$, then $T(f)(x)$ is finite almost everywhere in $\mathbb R^n$. Moreover, it is shown that $T(f)$ is bounded from $\mathrm{BMO}(\omega)$ into $\mathrm{BLO}(\omega)$, provided that $\omega\in A_1$. The corresponding John--Nirenberg inequality suitable for the space $\mathrm{BLO}(\omega)$ with $\omega\in A_1$ is discussed. Based on this, the equivalent characterization of the space $\mathrm{BLO}(\omega)$ is also given.
\end{abstract}

\begin{keyword}
%% PACS codes here, in the form: \PACS code \sep code
%% MSC codes here, in the form: \MSC code \sep code
%% or \MSC[2008] code \sep code (2000 is the default)
Littlewood--Paley square operators, $A_p$ weights, weighted BMO spaces, weighted BLO spaces, John--Nirenberg inequality.
\MSC[2020] 42B20, 42B25, 42B35
\end{keyword}

\end{frontmatter}

\tableofcontents

\section{Introduction}
In the present paper, we study the existence and boundedness of Littlewood--Paley square operators, which are a generalization of the classical Littlewood--Paley operators defined by Poisson integrals.The weighted version of BLO spaces is introduced and studied, and weighted BMO-BLO estimates for Littlewood--Paley square operators are established. We believe that the results obtained in this paper are original in the weighted case.

The symbols $\mathbb R$ and $\mathbb N$ stand for the sets of all real numbers and natural numbers, respectively. Let $\mathbb R^n$ be the $n$-dimensional Euclidean space endowed with the Lebesgue measure $dx$. The Euclidean norm of $x=(x_1,x_2,\dots,x_n)\in \mathbb R^n$ is given by
\begin{equation*}
|x|:=\bigg(\sum_{i=1}^n x_i^2\bigg)^{1/2}.
\end{equation*}
It is well known that the Littlewood--Paley theory is a very important tool in harmonic analysis, complex analysis and PDEs. Littlewood--Paley theory can be viewed as a profound generalization of the Pythagorean theorem. It originated in the 1930's and developed in the 1960's. The Littlewood--Paley function in one dimension was first introduced by Littlewood and Paley in studying the dyadic decomposition of Fourier series (see \cite{littlewood,littlewood2,littlewood3}). The Littlewood--Paley function of higher dimensions was first defined and studied by Stein (see \cite{emstein1,emstein2,stein}). Let us now recall the classical Littlewood--Paley operators on $\mathbb R^n$, which include the $g$-function,Lusin's area integral and Stein's $g^{\ast}_{\lambda}$-function. Let $f$ be a locally integrable function in $\mathbb R^n$ and $u(x,t):=P_t*f(x)$ be the Poisson integral of $f$ on the upper half space ${\mathbb R}^{n+1}_+=\big\{(y,t)\in{\mathbb R}^{n+1}:y\in\mathbb R^n,t>0\big\}$, where
\begin{equation*}
P_t(x):=c_n\cdot\frac{t}{(t^2+|x|^2)^{{(n+1)}/2}}\quad \& \quad c_n=\frac{\Gamma((n+1)/2)}{\pi^{(n+1)/2}}
\end{equation*}
denotes the Poisson kernel in ${\mathbb R}^{n+1}_+$.Then the classical Littlewood--Paley $g$-function of $f$ is defined by
\begin{equation*}
g(f)(x):=\bigg(\int_0^\infty \big|\nabla u(x,t)\big|^2 t\,dt\bigg)^{1/2},
\end{equation*}
where
\begin{equation*}
\nabla=\Big(\frac{\partial}{\partial t},\frac{\partial}{\partial x_1},\dots,\frac{\partial}{\partial x_n}\Big)\quad \& \quad \big|\nabla u(x,t)\big|^2=\Big|\frac{\partial u}{\partial t}\Big|^2+\sum_{j=1}^n\Big|\frac{\partial u}{\partial x_j}\Big|^2.
\end{equation*}
The classical Lusin's area integral (also referred to as the square function) and Stein's $g^{\ast}_{\lambda}$-function are defined, respectively, by
\begin{equation*}
S(f)(x):=\bigg(\iint_{\Gamma(x)}\big|\nabla u(y,t)\big|^2 t^{1-n}\,dydt\bigg)^{1/2}
\end{equation*}
and
\begin{equation*}
g^*_\lambda(f)(x):=\bigg(\iint_{{\mathbb R}^{n+1}_+}\bigg(\frac t{t+|x-y|}\bigg)^{\lambda n}
\big|\nabla u(y,t)\big|^2 t^{1-n}\,dydt\bigg)^{1/2},
\end{equation*}
where $\lambda>2$, and $\Gamma(x)$ denotes the usual cone with the vertex $x$ in $\mathbb R^n$,
\begin{equation*}
\Gamma(x):=\Big\{(y,t)\in{\mathbb R}^{n+1}_+:|y-x|<t\Big\}~~\mbox{and}~~
{\mathbb R}^{n+1}_+=\mathbb R^n\times(0,+\infty).
\end{equation*}

\begin{itemize}
  \item By the Plancherel formula, we can easily see that the classical Littlewood--Paley operators are all bounded on $L^2(\mathbb R^n)$.
  \item Let $1<p<\infty$. Stein proved that the Littlewood--Paley $g$-function can characterize $L^p$ spaces. Moreover, there exist two positive constants $C_1$ and $C_2$, independent of $f$, such that
  \begin{equation}\label{11}
  C_1\|f\|_{L^p}\leq\|g(f)\|_{L^p}\leq C_2\|f\|_{L^p},
  \end{equation}
  for every $f\in L^p(\mathbb R^n)$. The above estimate also holds for Lusin's area integral $S(f)$ and Littlewood--Paley $g^{\ast}_{\lambda}$-function $g^*_\lambda(f)$ when $\lambda>2$. For the proofs of these results, see Stein \cite{emstein1,emstein2,stein} and Fefferman \cite{fe}.
\end{itemize}
It is well known that $u(x,t)=P_t*f(x)$ satisfies Laplace's equation $\Delta u=0$ in ${\mathbb R}^{n+1}_+$,
\begin{equation*}
\Delta u(x,t)=\Delta_x u(x,t)+\frac{\partial^2u(x,t)}{\partial t^2}=0,
\end{equation*}
and has boundary values equal to $f$, in the sense that
\begin{equation*}
\lim_{t\to 0}u(x,t)=\lim_{t\to 0}P_t*f(x)=f(x)
\end{equation*}
almost everywhere. Moreover, $u(\cdot,t)\rightarrow f(\cdot)$ in $L^p(\mathbb R^n)$ if $f\in L^p(\mathbb R^n)$ with $1\leq p<\infty$. The estimates for classical Littlewood--Paley operators rely heavily on some tricks from classical harmonic analysis and partial differential equations (Green's theorem, the mean value property of harmonic functions, etc).

We now consider the following more general Littlewood--Paley square operators on $\mathbb R^n$. Let $\psi$ be a real-valued function on $\mathbb R^n$ satisfying the following conditions.
\begin{itemize}
  \item[(\textbf{P1})] (\textbf{the vanishing condition}):
  \begin{equation}\label{psi1}
  \psi\in L^1(\mathbb R^n)\quad \mbox{and} \quad \int_{\mathbb R^n}\psi(x)\,dx=0.
  \end{equation}
  \item[(\textbf{P2})] (\textbf{the size condition}): there exist two positive constants $C_1$ and $\delta$ such that
  \begin{equation}\label{psi2}
  |\psi(x)|\leq C_1\cdot\frac{1}{(1+|x|)^{n+\delta}}.
  \end{equation}
  \item[(\textbf{P3})] (\textbf{the smoothness condition}): there exist two positive constants $C_2$ and $\gamma$ such that
  \begin{equation}\label{psi3}
  |\psi(x+h)-\psi(x)|\leq C_2\cdot\frac{|h|^{\gamma}}{(1+|x|)^{n+\delta+\gamma}},
  \end{equation}
  whenever $2|h|\leq |x|$.
\end{itemize}
For such a function $\psi$, the generalized Littlewood--Paley $g$-function, Lusin's area integral and Littlewood--Paley $g^{\ast}_{\lambda}$-function are defined as follows:
\begin{equation*}
\mathcal{G}(f)(x):=\bigg(\int_0^\infty \big|\psi_t*f(x)\big|^2\frac{dt}{t}\bigg)^{1/2},
\end{equation*}
\begin{equation*}
\mathcal{S}(f)(x):=\bigg(\iint_{\Gamma(x)}\big|\psi_t*f(y)\big|^2\frac{dydt}{t^{n+1}}\bigg)^{1/2},
\end{equation*}
and
\begin{equation*}
\mathcal{G}^{*}_{\lambda}(f)(x):=\bigg(\iint_{{\mathbb R}^{n+1}_+}\left(\frac t{t+|x-y|}\right)^{\lambda n}
\big|\psi_t*f(y)\big|^2\frac{dydt}{t^{n+1}}\bigg)^{1/2}, \quad \lambda>2,
\end{equation*}
where for any function $\psi$ and for any $t\in(0,+\infty)$, we denote
\begin{equation*}
\psi_t(x):=\frac{1}{t^n}\psi\Big(\frac{x}{\,t\,}\Big).
\end{equation*}
\begin{itemize}
  \item Denote by $LP$ the collection of all functions $\psi$ satisfying \eqref{psi1}, \eqref{psi2} and \eqref{psi3}.
  By the classical theory of vector-valued singular integral operators, we can also obtain the strong-type $(p,p)$ ($1<p<\infty$) and weak-type $(1,1)$ estimates for generalized (real-variable) Littlewood--Paley square operators, including the above generalized Littlewood--Paley $g$-function,Lusin's area integral and Littlewood--Paley $g^{\ast}_{\lambda}$-function (these operators are sublinear and non-negative).
  \item As in \eqref{11}, it was shown that the generalized Littlewood--Paley $g$-function $\mathcal{G}$ can also characterize $L^p$ spaces. For any $1<p<\infty$ and $\psi\in LP$, then there exist two positive constants $C_1$ and $C_2$, independent of $f$, such that
  \begin{equation}\label{12}
  C_1\|f\|_{L^p}\leq\|\mathcal{G}(f)\|_{L^p}\leq C_2\|f\|_{L^p},
  \end{equation}
  for all $f\in L^p(\mathbb R^n)$. Moreover, the above estimate also holds for generalized Lusin's area integral $\mathcal{S}(f)$ and Stein's function $\mathcal{G}^*_{\lambda}(f)$ when $\lambda>2$. More details can be found in
  \cite[Chapter 5]{lu}, \cite[Chapter XII]{tor}, \cite[Chapter 6]{wilson2} and \cite[Theorem 1.1]{xueding}.
\end{itemize}

\begin{rem}\label{rem11}
If we take
\begin{equation*}
\psi(x)=\frac{\partial}{\partial t}P_t(x)\Big|_{t=1},
\end{equation*}
then by the well-known properties of the Poisson kernel $P_t(x)$ and the mean value theorem, we can easily check that the above three conditions are satisfied.
\end{rem}

Recall that, for any given $p\in[1,+\infty)$, the Lebesgue space $L^p(\mathbb R^n)$ is defined as the set of all integrable functions $f$ on $\mathbb R^n$ such that
\begin{equation*}
\|f\|_{L^p}:=\bigg(\int_{\mathbb R^n}|f(x)|^p\,dx\bigg)^{1/p}<+\infty,
\end{equation*}
Let $L^{\infty}(\mathbb R^n)$ denote the Banach space of all essentially bounded measurable functions $f$ on $\mathbb R^n$.
The norm of $f\in L^{\infty}(\mathbb R^n)$ is given by
\begin{equation*}
\|f\|_{L^\infty}:=\underset{x\in\mathbb R^n}{\mbox{ess\,sup}}\,|f(x)|<+\infty.
\end{equation*}
For any $x_0\in\mathbb R^n$ and $r>0$, let $B(x_0,r):=\{x\in\mathbb R^n:|x-x_0|<r\}$ denote the open ball centered at $x_0$ with the radius $r$ and $B(x_0,r)^{\complement}$ denote its complement. Given $B=B(x_0,r)$ and $t>0$, we will write $tB$ for the $t$-dilate ball, which is the ball with the same center $x_0$ and with radius $tr$.Similarly, $Q(x_0,r)$ denotes the cube centered at $x_0$ and with the sidelength $r$. Here and in what follows, only cubes with sides parallel to the coordinate axes are considered, and $t Q=Q(x_0,t r)$. For a measurable set $E\subset\mathbb R^n$, we use the notation $m(E)$ for the $n$-dimensional Lebesgue measure of the set $E$, and we use the notation $\chi_{E}$ to denote the characteristic function of the set $E$: $\chi_E(x)=1$ if $x\in E$ and $0$ if $x\notin E$.

Let us now recall the definition of the space $\mathrm{BMO}(\mathbb R^n)$. A locally integrable function $f$ on $\mathbb R^n$ is said to be in $\mathrm{BMO}(\mathbb R^n)$, the space of bounded mean oscillation (see \cite{john}), if
\begin{equation*}
\|f\|_{\mathrm{BMO}}:=\sup_{\mathcal{B}\subset\mathbb R^n}\frac{1}{m(\mathcal{B})}
\int_{\mathcal{B}}|f(x)-f_{\mathcal{B}}|\,dx<+\infty,
\end{equation*}
where $f_{\mathcal{B}}$ denotes the mean value of $f$ over $\mathcal{B}$, i.e.,
\begin{equation*}
f_{\mathcal{B}}:=\frac{1}{m(\mathcal{B})}\int_{\mathcal{B}} f(y)\,dy
\end{equation*}
and the supremum is taken over all balls $\mathcal{B}$ in $\mathbb R^n$. Modulo constants, the space $\mathrm{BMO}(\mathbb R^n)$ is a Banach space with respect to the norm $\|\cdot\|_{\mathrm{BMO}}$. The space of BMO functions was first introduced by John and Nirenberg in \cite{john}.

A locally integrable function $f$ on $\mathbb R^n$ is said to be in $\mathrm{BLO}(\mathbb R^n)$, the space of bounded lower oscillation (see \cite{coifman}), if there exists a constant $C>0$ such that for any ball $\mathcal{B}\subset\mathbb R^n$,
\begin{equation*}
\frac{1}{m(\mathcal{B})}\int_{\mathcal{B}}\Big[f(x)-\underset{y\in\mathcal{B}}{\mathrm{ess\,inf}}\,f(y)\Big]\,dx\leq C.
\end{equation*}
The minimal constant $C$ as above is defined to be the BLO-constant of $f$ and denoted by $\|f\|_{\mathrm{BLO}}$. The space of BLO functions was first introduced by Coifman and Rochberg in \cite{coifman}. It is easy to see that
\begin{equation*}
L^{\infty}(\mathbb R^n)\subset\mathrm{BLO}(\mathbb R^n)\subset\mathrm{BMO}(\mathbb R^n).
\end{equation*}
Moreover, the above inclusion relations are both strict, see \cite{hu,meng,ou} for some examples.

\begin{rem}
Equivalently, we could define the above notions with cubes instead of balls. Hence we shall use these two different definitions appropriate to calculations.
\end{rem}

A few historical remarks are given as follows:
\begin{enumerate}
  \item The Littlewood--Paley $g$-function of $f$ is well defined under some assumptions on $f$, although it may be infinite on a set of positive measure. In 1985, Wang \cite{wang} first studied the behavior of classical Littlewood--Paley $g$-function acting on $L^{\infty}(\mathbb R^n)$ and $\mathrm{BMO}(\mathbb R^n)$, and proved the following result. If $f\in \mathrm{BMO}(\mathbb R^n)$, then $g(f)$ is either
infinite everywhere or finite almost everywhere, and in the latter case, there is a positive constant $C$ depending only on the dimension $n$ such that
\begin{equation*}
\big\|g(f)\big\|_{\mathrm{BMO}}\leq C\big\|f\big\|_{\mathrm{BMO}}.
\end{equation*}
The above interesting result also holds for the classical Lusin's area integral $S(f)$ and Stein's $g^{\ast}_{\lambda}$-function $g^{\ast}_{\lambda}(f)$, which was established by Kurtz \cite{k} in 1987.
  \item Subsequently, in 2004, Sun \cite{sun} and Yabuta \cite{ya} studied the existence and boundedness properties of generalized Littlewood--Paley operators on BMO spaces (and Campanato spaces), and proved the following result. Suppose that $\psi\in L^1(\mathbb R^n)$ satisfies \eqref{psi1}, \eqref{psi2} with $\delta=1$, and the condition
\begin{equation}\label{psi4}
\big|\nabla\psi(x)\big|\leq C_2\cdot\frac{1}{(1+|x|)^{n+2}},
\end{equation}
where $\nabla:=(\partial/{\partial x_1},\dots,\partial/{\partial x_n})$ and $C_2$ is a positive constant independent of $x=(x_1,\dots,x_n)\in\mathbb R^n$. Then the generalized Littlewood--Paley $g$-function $\mathcal{G}$ is bounded on $\mathrm{BMO}(\mathbb R^n)$. More precisely, if $f\in \mathrm{BMO}(\mathbb R^n)$ and $\mathcal{G}(f)(x_0)<+\infty$ for a single point $x_0\in \mathbb R^n$, then $\mathcal{G}(f)$ is finite almost everywhere, and there exists a positive constant $C>0$, independent of $f$, such that
\begin{equation*}
\big\|\mathcal{G}(f)\big\|_{\mathrm{BMO}}\leq C\big\|f\big\|_{\mathrm{BMO}}.
\end{equation*}
Similar results for generalized Lusin's area integral and Littlewood--Paley $g^{\ast}_{\lambda}$-function were also obtained in \cite{sun,ya}.
\end{enumerate}

In 1990, Leckband \cite{leck} established the boundedness of the square of the Littlewood--Paley $g$-function, Lusin's area integral and Littlewood--Paley $g^{\ast}_{\lambda}$-function from $L^{\infty}(\mathbb R^n)$ into $\mathrm{BLO}(\mathbb R^n)$, which is a proper subspace of $\mathrm{BMO}(\mathbb R^n)$. More precisely, Leckband proved that (see \cite[Theorem 1]{leck})

\begin{thm}
If $f\in L^{\infty}(\mathbb R^n)$, then there exists a positive constant $C>0$, independent of $f$, such that
\begin{equation*}
\big\|[T_{g}(f)]^2\big\|_{\mathrm{BLO}}\leq C\big\|f\big\|_{L^{\infty}}^2,
\end{equation*}
where $T_{g}(f)$ denotes any one of the usual classical or generalized Littlewood--Paley functions.
\end{thm}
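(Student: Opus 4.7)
The goal is to show, for every ball $\mathcal B = B(x_0, r) \subset \mathbb R^n$,
\begin{equation*}
\frac{1}{m(\mathcal B)} \int_{\mathcal B} [T_g(f)]^2(x)\, dx - \underset{z \in \mathcal B}{\mathrm{ess\,inf}}\, [T_g(f)]^2(z) \leq C\|f\|_{L^\infty}^2,
\end{equation*}
uniformly in $f$ and $\mathcal B$. I present the scheme for the generalized $g$-function $\mathcal G$; the Lusin area integral $\mathcal S$ and Stein's $\mathcal G^*_\lambda$ are handled analogously, with the cone $\Gamma(x)$ or the weight $(t/(t+|x-y|))^{\lambda n}$ absorbed via Fubini. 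Decompose $f = f_1 + f_2$ with $f_1 := f\chi_{8\mathcal B}$; the $L^2$ boundedness of $\mathcal G$ (estimate~\eqref{12} with $p=2$) gives $\int_{\mathcal B}[\mathcal G(f_1)]^2 \leq C\|f\|_{L^\infty}^2\,m(\mathcal B)$, so by Chebyshev's inequality together with a positive-measure intersection with the ess-inf set of $[\mathcal G(f)]^2$, I can select $z_0 \in \mathcal B$ with both $[\mathcal G(f_1)]^2(z_0) \leq C\|f\|_{L^\infty}^2$ and $[\mathcal G(f)]^2(z_0) \leq \underset{\mathcal B}{\mathrm{ess\,inf}}\,[\mathcal G(f)]^2 + \varepsilon$ for any prescribed $\varepsilon > 0$.

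Next I establish pointwise kernel bounds. For $x\in\mathcal B$ and $y\in(8\mathcal B)^\complement$, the inequality $2|x-z_0|\leq 4r\leq |x-y|$ makes the smoothness condition (\textbf{P3}) applicable, and combined with the size condition (\textbf{P2}) and integration in $y$ (splitting the $t$-range at $t=r$) yields
\begin{equation*}
|\psi_t*f_2(z_0)|\leq C\|f\|_{L^\infty}\min\{(t/r)^\delta,1\}, \quad |\psi_t*f_2(x)-\psi_t*f_2(z_0)|\leq C\|f\|_{L^\infty}\min\{(t/r)^\delta,(r/t)^\gamma\},
\end{equation*}
while a parallel splitting of $8\mathcal B$ into $\{|x-y|\geq 4r\}$ (where (\textbf{P3}) still applies) and $\{|x-y|<4r\}$ (where the trivial bound $\|\psi_t\|_\infty\leq Ct^{-n}$ suffices) delivers $|\psi_t*f_1(x)-\psi_t*f_1(z_0)|\leq C\|f\|_{L^\infty}(r/t)^n$ for $t\geq r$. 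These imply the $L^2(dt/t)$-oscillation bound $\|\psi_t*f_2(x)-\psi_t*f_2(z_0)\|_{L^2(dt/t)}\leq C\|f\|_{L^\infty}$ together with two pairing bounds $\int_0^\infty|\psi_t*f_2(z_0)||\psi_t*f_2(x)-\psi_t*f_2(z_0)|\,dt/t\leq C\|f\|_{L^\infty}^2$ and $\int_0^\infty|\psi_t*f_2(z_0)||\psi_t*f_1(x)-\psi_t*f_1(z_0)|\,dt/t\leq C\|f\|_{L^\infty}^2$ after averaging in $x$.

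I now expand $[\mathcal G(f)]^2 = [\mathcal G(f_1)]^2 + 2I + [\mathcal G(f_2)]^2$ with $I(x):=\int_0^\infty(\psi_t*f_1)(\psi_t*f_2)\,dt/t$, subtract the analogous expansion at $z_0$, and bound the three differences after averaging over $\mathcal B$. The $[\mathcal G(f_1)]^2$-difference is controlled by the $L^2$ estimate at $x$ and the choice of $z_0$. The $[\mathcal G(f_2)]^2$-difference, after applying $a^2-b^2 = (a-b)^2 + 2b(a-b)$ inside the $t$-integral, decomposes into the $L^2(dt/t)$-oscillation bound and the first pairing integral, each $\leq C\|f\|_{L^\infty}^2$. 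The cross-term difference $2[I(x)-I(z_0)]$ splits as $2\int[\psi_t*f_1(x)-\psi_t*f_1(z_0)]\psi_t*f_2(z_0)\,dt/t$ (controlled via the second pairing bound) plus $2\int\psi_t*f_1(x)[\psi_t*f_2(x)-\psi_t*f_2(z_0)]\,dt/t$ (controlled by Cauchy--Schwarz in $L^2(dt/t)$, using the $L^2(dt/t)$-oscillation bound and the average bound $\frac{1}{m(\mathcal B)}\int_{\mathcal B}\mathcal G(f_1)\leq C\|f\|_{L^\infty}$ via Jensen's inequality). Summing yields $\frac{1}{m(\mathcal B)}\int_{\mathcal B}[\mathcal G(f)]^2 - [\mathcal G(f)]^2(z_0)\leq C\|f\|_{L^\infty}^2$; letting $\varepsilon\to 0$ completes the argument.

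\textbf{Main obstacle.} The delicate point is the pairing integral $2\int\psi_t*f_2(z_0)[\psi_t*f_2(x)-\psi_t*f_2(z_0)]\,dt/t$ arising inside the $[\mathcal G(f_2)]^2$-difference: a blunt Cauchy--Schwarz in $L^2(dt/t)$ produces a factor $\mathcal G(f_2)(z_0)$, which is not a priori controlled by $\|f\|_{L^\infty}$ and destroys the BLO estimate. The resolution is to exploit the pointwise decay from the kernel step: $|\psi_t*f_2(z_0)|$ decays as $(t/r)^\delta$ for $t<r$ while the increment $|\psi_t*f_2(x)-\psi_t*f_2(z_0)|$ decays as $(r/t)^\gamma$ for $t>r$, so the product is absolutely integrable in $dt/t$ with norm $\leq C\|f\|_{L^\infty}^2$, sidestepping Cauchy--Schwarz entirely. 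This finer use of the kernel structure is precisely what makes $[T_g(f)]^2$ (rather than $T_g(f)$ itself) fall into BLO.
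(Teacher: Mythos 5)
First, a point of reference: the paper does not prove this statement — it is quoted as Leckband's theorem \cite[Theorem 1]{leck} — so there is no in-paper proof to compare against line by line. The closest arguments in the paper are the proofs of Theorems \ref{mainthm1}--\ref{mainthm3}, which split the $t$-integral at $t=r$ into a local piece $T_0$ and a tail $T_\infty$, bound the average of $T_0(f)$ over $\mathcal B$ (via $L^2$-boundedness for $f\chi_{c\mathcal B}$ and kernel decay for the far part of $f$), and show that $T_\infty(f)$ oscillates by at most a constant on $\mathcal B$; the pointwise inequality $T_\infty\le T$ then disposes of the essential infimum with no selection of a distinguished point. Your route is genuinely different: you split $f$ spatially rather than splitting $t$, expand the square algebraically into $[\mathcal G(f_1)]^2+2I+[\mathcal G(f_2)]^2$, and compare against a single near-infimum point $z_0$. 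Your kernel estimates (the $\min\{(t/r)^\delta,1\}$ decay of $\psi_t*f_2$, the $\min\{(t/r)^\delta,(r/t)^\gamma\}$ oscillation bound, the two pairing integrals) are all correct, and your identification of $\int_0^\infty|\psi_t*f_2(z_0)|\,|\psi_t*f_2(x)-\psi_t*f_2(z_0)|\,dt/t$ as the crux — where Cauchy--Schwarz must be avoided because $\mathcal G(f_2)(z_0)$ is not controlled by $\|f\|_{L^\infty}$ — is exactly the right insight.

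The one genuine flaw is the selection of $z_0$. The set where $[\mathcal G(f)]^2\le \mathrm{ess\,inf}_{\mathcal B}[\mathcal G(f)]^2+\varepsilon$ has positive measure, but that measure can be arbitrarily small, so nothing guarantees that it meets the Chebyshev set $\{z:[\mathcal G(f_1)]^2(z)\le 2C\|f\|_{L^\infty}^2\}$, whose measure is only known to be at least $m(\mathcal B)/2$; the claimed ``positive-measure intersection'' is not justified. Fortunately the Chebyshev condition is never actually needed: in the difference $\tfrac{1}{m(\mathcal B)}\int_{\mathcal B}[\mathcal G(f_1)]^2(x)\,dx-[\mathcal G(f_1)]^2(z_0)$ the subtracted term is nonnegative and may simply be dropped, after which the $L^2$ bound on the average suffices, and every other estimate in your scheme holds for an arbitrary $z_0\in\mathcal B$. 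So take $z_0$ to be any point of the near-infimum set and delete the Chebyshev step. Two smaller points: you should justify that the three differences are individually well defined (work with integrals of differences of the $t$-integrands, since $[\mathcal G(f_2)]^2(x)$ and $[\mathcal G(f_2)]^2(z_0)$ may each be infinite), and for $\mathcal G^{\ast}_{\lambda}$ the ``absorb the weight via Fubini'' step genuinely costs factors of $2^{\ell n}$ and requires a restriction on $\lambda$ (compare the hypothesis $(\lambda-3)n>2\delta+2\gamma$ in Theorem \ref{mainthm3}), so ``handled analogously'' is too quick there.
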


In 2008, Meng and Yang \cite{meng} further discussed the behavior of generalized Littlewood--Paley operators on BMO spaces. Let $\mathcal{G}(f)$ be the Littlewood--Paley $g$-function of $f$ on $\mathbb R^n$. Meng and Yang proved that if $f\in \mathrm{BMO}(\mathbb R^n)$, then $\mathcal{G}(f)$ is either infinite everywhere or finite almost everywhere, and in the latter case, $[\mathcal{G}(f)]^2$ is bounded from $\mathrm{BMO}(\mathbb R^n)$ into $\mathrm{BLO}(\mathbb R^n)$(see \cite[Theorem 1.1]{meng}), which is an improvement of the result of Leckband.

\begin{thm}
Suppose that $\psi\in L^1(\mathbb R^n)$ satisfies \eqref{psi1}, \eqref{psi2} with $\delta=1$ and \eqref{psi4}. If $f\in \mathrm{BMO}(\mathbb R^n)$, then $\mathcal{G}(f)$ is either infinite everywhere or finite almost everywhere, and in the latter case, there exists a positive constant $C>0$, independent of $f$, such that
\begin{equation*}
\big\|[\mathcal{G}(f)]^2\big\|_{\mathrm{BLO}}\leq C\big\|f\big\|_{\mathrm{BMO}}^2.
\end{equation*}
\end{thm}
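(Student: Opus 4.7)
The plan is the standard BMO-to-BLO decomposition argument, adapted to the squared Littlewood--Paley operator. Freeze an arbitrary ball $B=B(x_0,r)$ and split
$$
f=f_1+f_2+f_{2B},\qquad f_1:=(f-f_{2B})\chi_{2B},\qquad f_2:=(f-f_{2B})\chi_{(2B)^{\complement}}.
$$
The vanishing condition \eqref{psi1} forces $\psi_t*f_{2B}\equiv 0$, so $\psi_t*f=\psi_t*f_1+\psi_t*f_2$, and expansion of the square yields the key identity
$$
[\mathcal{G}(f)]^2=[\mathcal{G}(f_1)]^2+[\mathcal{G}(f_2)]^2+2\int_0^{\infty}(\psi_t*f_1)(\psi_t*f_2)\,\frac{dt}{t}.
$$

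For the dichotomy, I would assume $\mathcal{G}(f)(\bar x)<+\infty$ at some point $\bar x$ and fix any ball $B\ni\bar x$. The John--Nirenberg inequality places $f_1$ in $L^2(\mathbb{R}^n)$, so \eqref{12} gives $\mathcal{G}(f_1)<+\infty$ a.e. For $\mathcal{G}(f_2)$, a standard dyadic annular decomposition of $(2B)^{\complement}$ combined with the smoothness condition \eqref{psi4} produces the pointwise oscillation bound
$$
|\mathcal{G}(f_2)(x)-\mathcal{G}(f_2)(\bar x)|\leq C\|f\|_{\mathrm{BMO}},\qquad x\in B,
$$
with convergence of the dyadic sum ensured by the extra decay in \eqref{psi4}. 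Since $\mathcal{G}(f_2)(\bar x)\leq\mathcal{G}(f)(\bar x)+\mathcal{G}(f_1)(\bar x)<+\infty$ at suitable $\bar x$, finiteness of $\mathcal{G}(f_2)$, and hence of $\mathcal{G}(f)$, transfers a.e.\ to $B$; since $B$ was arbitrary, $\mathcal{G}(f)$ is finite a.e.\ on $\mathbb{R}^n$.

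For the BLO estimate, choose $z_0\in B$ with (i) $[\mathcal{G}(f)]^2(z_0)$ finite and arbitrarily close to $\mathrm{ess\,inf}_{z\in B}[\mathcal{G}(f)]^2(z)$ and (ii) $\mathcal{G}(f_1)(z_0)\leq C\|f\|_{\mathrm{BMO}}$; both can be arranged simultaneously by Chebyshev on the $L^2$-bound for $\mathcal{G}(f_1)$. It then suffices to bound
$$
\frac{1}{m(B)}\int_B\big\{[\mathcal{G}(f)]^2(x)-[\mathcal{G}(f)]^2(z_0)\big\}\,dx\leq C\|f\|^2_{\mathrm{BMO}}.
$$
Subtracting the key identity at $x$ and $z_0$ and using $[\mathcal{G}(f_1)]^2(x)-[\mathcal{G}(f_1)]^2(z_0)\leq[\mathcal{G}(f_1)]^2(x)$, the integrand splits into three pieces: the $[\mathcal{G}(f_1)]^2$-piece, controlled by \eqref{12} and the John--Nirenberg bound $\int_{2B}|f-f_{2B}|^2\,dx\leq C\,m(B)\|f\|^2_{\mathrm{BMO}}$; a squared-oscillation piece for $\mathcal{G}(f_2)$; and a cross-term difference, dispatched by Cauchy--Schwarz combined with (ii) and the previous two estimates.

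The main obstacle, and the technical heart of the argument, is the squared-oscillation bound
$$
\big|[\mathcal{G}(f_2)]^2(x)-[\mathcal{G}(f_2)]^2(z_0)\big|\leq C\|f\|^2_{\mathrm{BMO}},\qquad x\in B.
$$
I would write $|\psi_t*f_2|^2(x)-|\psi_t*f_2|^2(z_0)=[\psi_t*f_2(x)-\psi_t*f_2(z_0)][\psi_t*f_2(x)+\psi_t*f_2(z_0)]$, integrate in $dt/t$, and apply Cauchy--Schwarz. The first $L^2(dt/t)$-factor is controlled by $C\|f\|_{\mathrm{BMO}}$ via the smoothness-based dyadic estimate from the dichotomy step; the second factor is $\mathcal{G}(f_2)(x)+\mathcal{G}(f_2)(z_0)$, bounded by a suitable multiple of $\|f\|_{\mathrm{BMO}}$ through the sublinear inequality $\mathcal{G}(f_2)\leq\mathcal{G}(f)+\mathcal{G}(f_1)$ combined with (i)--(ii) and the linear oscillation bound. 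The remaining manipulations are routine bookkeeping.
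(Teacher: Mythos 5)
The statement you are proving is the one the paper quotes from Meng--Yang rather than proves, but it can be measured against the argument the paper does give for its weighted, unsquared analogues: there the key move is to split the $t$-integral at $t=r$, writing $\mathcal{G}=\mathcal{G}_0+\mathcal{G}_\infty$, and your proposal omits this scale decomposition entirely. That omission produces a genuine gap at the step you call the technical heart. After Cauchy--Schwarz in $L^2(dt/t)$ you claim the second factor $\mathcal{G}(f_2)(x)+\mathcal{G}(f_2)(z_0)$ is bounded by $C\|f\|_{\mathrm{BMO}}$ via $\mathcal{G}(f_2)\le\mathcal{G}(f)+\mathcal{G}(f_1)$ together with (i)--(ii). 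But (i) only gives $\mathcal{G}(f)(z_0)\approx\big(\mathrm{ess\,inf}_{B}[\mathcal{G}(f)]^2\big)^{1/2}$, and this essential infimum is \emph{not} controlled by $\|f\|_{\mathrm{BMO}}$: taking $f=\sum_{k=1}^{N}\eta(x/(2^kr))$ with $\eta$ a fixed mean-zero bump, one has $\|f\|_{\mathrm{BMO}}\lesssim 1$ while $\mathcal{G}(f)\approx\sqrt{N}$ throughout $B(0,r)$, and $\mathcal{G}(f_2)\approx\sqrt{N}$ as well. Your bound for the squared-oscillation piece then reads $C\|f\|_{\mathrm{BMO}}\cdot\sqrt{N}$ rather than $C\|f\|_{\mathrm{BMO}}^2$, and the cross term $H(x)-H(z_0)$ suffers the same loss. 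The entire content of the BLO (as opposed to BMO) conclusion is that the oscillation of $[\mathcal{G}(f)]^2$ above its infimum stays $O(\|f\|_{\mathrm{BMO}}^2)$ even when that infimum is enormous, so this factor cannot be absorbed.

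The repair is exactly the missing $t$-splitting. Write $[\mathcal{G}(f)]^2=[\mathcal{G}_0(f)]^2+[\mathcal{G}_\infty(f)]^2$ (an identity, not an inequality), note $\mathrm{ess\,inf}_B[\mathcal{G}(f)]^2\ge\mathrm{ess\,inf}_B[\mathcal{G}_\infty(f)]^2$, and treat the two pieces asymmetrically: the local piece $[\mathcal{G}_0(f)]^2$ has \emph{average} over $B$ at most $C\|f\|_{\mathrm{BMO}}^2$ (here your spatial splitting $f_1+f_2$ and the $L^2$/John--Nirenberg bounds do the work, since $[\mathcal{G}_0(f_2)]^2\lesssim\|f\|_{\mathrm{BMO}}^2$ pointwise thanks to the factor $(t/r)^{\delta}$ for $t<r$), while the global piece has \emph{oscillation} at most $C\|f\|_{\mathrm{BMO}}^2$. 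For the latter one must not apply a global Cauchy--Schwarz in $L^2(dt/t)$; instead bound $\big||\psi_t*f(x)|^2-|\psi_t*f(y)|^2\big|\le|\psi_t*f(x)-\psi_t*f(y)|\,\big(|\psi_t*f(x)|+|\psi_t*f(y)|\big)$ pointwise in $t$, use the decay $|\psi_t*f(x)-\psi_t*f(y)|\lesssim (r/t)^{\gamma'}\|f\|_{\mathrm{BMO}}$ together with the crude size bound $|\psi_t*f(y)|\lesssim(1+\log(t/r))\|f\|_{\mathrm{BMO}}$ valid for $t\ge r$, $y\in B$, and only then integrate $dt/t$; the product is summable and yields $C\|f\|_{\mathrm{BMO}}^2$. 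This is the squared analogue of the paper's $I_\infty$ estimate and is how Meng--Yang's proof proceeds. A smaller soft spot of the same origin: in your dichotomy you need $\mathcal{G}(f_1)(\bar x)<\infty$ at the specific point where $\mathcal{G}(f)(\bar x)<\infty$, which is only known almost everywhere; the $t$-splitting fixes this too, since $\mathcal{G}_0(f_2)$ and $\mathcal{G}_\infty(f_1)$ are finite at every point of $B$.
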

Similar results for generalized Lusin's area integral and Littlewood--Paley $g^{\ast}_{\lambda}$-function were also obtained by Meng and Yang (see \cite[Theorems 1.2 and 1.3]{meng}). The corresponding estimates for Marcinkiewicz integrals can be found in \cite{hu}.

We remark that the condition \eqref{psi4} implies \eqref{psi3} with $\gamma=1$, by applying the mean value theorem. Arguing as in the proof of Theorem 1.1 in \cite{meng}, we can also show that the conclusions of the above theorem still hold for the generalized Littlewood--Paley operators $\mathcal{G}(f)$,$\mathcal{S}(f)$ and $\mathcal{G}^{\ast}_{\lambda}(f)$, under the conditions \eqref{psi1},\eqref{psi2} and \eqref{psi3} on $\psi$.

Note that for any given ball $\mathcal{B}$ in $\mathbb R^n$ and for any $x\in \mathcal{B}$, if
\begin{equation*}
\underset{y\in\mathcal{B}}{\mathrm{ess\,inf}}\,\big[\mathcal{F}(y)\big]<+\infty,
\end{equation*}
then
\begin{equation*}
\big[\mathcal{F}(x)\big]-\underset{y\in\mathcal{B}}{\mathrm{ess\,inf}}\,\big[\mathcal{F}(y)\big]\leq
\Big(\big[\mathcal{F}(x)\big]^2-\underset{y\in\mathcal{B}}{\mathrm{ess\,inf}}\,\big[\mathcal{F}(y)\big]^2\Big)^{1/2},
\end{equation*}
which in turn implies that
\begin{equation}\label{BLOsquare}
\big\|\mathcal{F}\big\|^2_{\mathrm{BLO}}\leq\big\|\mathcal{F}^2\big\|_{\mathrm{BLO}}.
\end{equation}

As a direct consequence of \eqref{BLOsquare}, we obtain the following results.
\begin{thm}\label{thm11}
If $f\in L^{\infty}(\mathbb R^n)$, then there exists a positive constant $C>0$, independent of $f$, such that
\begin{equation*}
\big\|T_{g}(f)\big\|_{\mathrm{BLO}}\leq C\big\|f\big\|_{L^{\infty}},
\end{equation*}
where $T_{g}(f)$ denotes any one of the usual classical or generalized Littlewood--Paley functions.
\end{thm}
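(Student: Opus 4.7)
The plan is to obtain the stated $L^{\infty}\to\mathrm{BLO}$ estimate as an immediate consequence of Leckband's theorem (stated just above) together with the general functional inequality~\eqref{BLOsquare}. By Leckband's theorem applied to the square of the Littlewood--Paley operator, there exists a constant $C>0$, independent of $f$, such that
\begin{equation*}
\bigl\|[T_g(f)]^2\bigr\|_{\mathrm{BLO}}\leq C\bigl\|f\bigr\|_{L^\infty}^2.
\end{equation*}
Since $T_g(f)$ is nonnegative, inequality~\eqref{BLOsquare} applied to $\mathcal{F}=T_g(f)$ gives
\begin{equation*}
\bigl\|T_g(f)\bigr\|_{\mathrm{BLO}}^2\leq \bigl\|[T_g(f)]^2\bigr\|_{\mathrm{BLO}}.
\end{equation*}
Combining the two displays and taking the square root yields $\|T_g(f)\|_{\mathrm{BLO}}\leq C^{1/2}\|f\|_{L^\infty}$, which is the desired conclusion.

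Before applying~\eqref{BLOsquare}, one must verify its standing hypothesis, namely that $\mathrm{ess\,inf}_{y\in\mathcal{B}}T_g(f)(y)<+\infty$ on every ball $\mathcal{B}\subset\mathbb{R}^n$. This is automatic once Leckband's theorem is invoked: the very fact that $[T_g(f)]^2\in\mathrm{BLO}$ implies that $[T_g(f)]^2$ has finite essential infimum on every ball, and hence so does $T_g(f)$ itself, because $T_g(f)\geq 0$. Thus the pointwise comparison used to derive~\eqref{BLOsquare} is legitimate on every ball, and the chain of inequalities above makes sense without any caveat.

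I do not expect any serious obstacle here, since the heavy lifting has already been done in Leckband's theorem and in the elementary inequality~\eqref{BLOsquare}. The argument is uniform in the choice of $T_g$ because both ingredients are stated once and for all for any of the generalized Littlewood--Paley functions $\mathcal{G}(f)$, $\mathcal{S}(f)$, and $\mathcal{G}^{\ast}_{\lambda}(f)$; no additional regularity assumption on $\psi$ beyond \eqref{psi1}--\eqref{psi3} is needed for this deduction. I would therefore present the proof as a short paragraph that records the two inequalities in sequence, checks the finiteness of the essential infimum, and concludes by taking the square root.
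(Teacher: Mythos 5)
Your proposal is correct and follows exactly the paper's own route: the paper states Theorem \ref{thm11} precisely as "a direct consequence of \eqref{BLOsquare}" combined with Leckband's theorem, which is the two-line deduction you give. Your extra remark verifying that $\underset{y\in\mathcal{B}}{\mathrm{ess\,inf}}\,T_g(f)(y)<+\infty$ on every ball (so that the pointwise inequality behind \eqref{BLOsquare} applies) is a small but legitimate point that the paper leaves implicit.
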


\begin{thm}\label{thm12}
Suppose that $\psi\in L^1(\mathbb R^n)$ satisfies \eqref{psi1}, \eqref{psi2} with $\delta=1$ and \eqref{psi4}. If $f\in \mathrm{BMO}(\mathbb R^n)$, then $\mathcal{G}(f)$ is either infinite everywhere or finite almost everywhere, and in the latter case, there exists a positive constant $C>0$, independent of $f$, such that
\begin{equation*}
\big\|\mathcal{G}(f)\big\|_{\mathrm{BLO}}\leq C\big\|f\big\|_{\mathrm{BMO}}.
\end{equation*}
\end{thm}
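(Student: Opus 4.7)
The plan is to derive this theorem as an immediate corollary of the Meng--Yang estimate $\|[\mathcal{G}(f)]^2\|_{\mathrm{BLO}} \leq C\|f\|_{\mathrm{BMO}}^2$ (recorded in the theorem stated just above) combined with the elementary inequality \eqref{BLOsquare}. In short, the substantive analytic work has already been done at the level of the square $[\mathcal{G}(f)]^2$; my only task is to transfer that estimate back to $\mathcal{G}(f)$ itself.

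First I would settle the zero--one alternative. Since $\mathcal{G}(f)$ is nonnegative, one has $\mathcal{G}(f)(x)=+\infty$ if and only if $[\mathcal{G}(f)(x)]^2=+\infty$ at every point $x\in\mathbb{R}^n$. Hence the dichotomy ``infinite everywhere or finite almost everywhere'' already proved by Meng--Yang for $[\mathcal{G}(f)]^2$ transfers verbatim to $\mathcal{G}(f)$.

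Assume henceforth that $\mathcal{G}(f)$ is finite almost everywhere. Then for every ball $\mathcal{B}\subset\mathbb{R}^n$ one has $\mathrm{ess\,inf}_{y\in\mathcal{B}}\mathcal{G}(f)(y)<+\infty$, so the hypothesis underlying \eqref{BLOsquare} is met with $\mathcal{F}=\mathcal{G}(f)$. Chaining \eqref{BLOsquare} with the quoted Meng--Yang bound then gives
\begin{equation*}
\big\|\mathcal{G}(f)\big\|_{\mathrm{BLO}}^{2}\;\leq\;\big\|[\mathcal{G}(f)]^{2}\big\|_{\mathrm{BLO}}\;\leq\;C\,\big\|f\big\|_{\mathrm{BMO}}^{2},
\end{equation*}
and extracting the square root produces $\|\mathcal{G}(f)\|_{\mathrm{BLO}}\leq C^{1/2}\|f\|_{\mathrm{BMO}}$, which is the conclusion of the theorem.

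There is no substantive obstacle here: the argument is purely a repackaging of the preceding result via the trivial pointwise inequality $a-b\leq\sqrt{a^{2}-b^{2}}$ valid for $0\leq b\leq a<\infty$, which is what underlies \eqref{BLOsquare}. The only point worth double-checking in a careful write-up is that the Meng--Yang estimate for $[\mathcal{G}(f)]^{2}$ is indeed available under exactly the stated hypotheses \eqref{psi1}, \eqref{psi2} with $\delta=1$, and \eqref{psi4}, which is precisely the hypothesis set assumed in Theorem~\ref{thm12}.
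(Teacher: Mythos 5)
Your argument is exactly the paper's: the authors state Theorem \ref{thm12} as ``a direct consequence of \eqref{BLOsquare}'' applied to the Meng--Yang bound $\|[\mathcal{G}(f)]^2\|_{\mathrm{BLO}}\leq C\|f\|_{\mathrm{BMO}}^2$, which is precisely the chain $\|\mathcal{G}(f)\|_{\mathrm{BLO}}^2\leq\|[\mathcal{G}(f)]^2\|_{\mathrm{BLO}}\leq C\|f\|_{\mathrm{BMO}}^2$ you give, with the same transfer of the everywhere-infinite/a.e.-finite dichotomy. The proof is correct and matches the paper's approach.
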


\section{Definitions of weighted BMO and BLO spaces}
A weight $\omega$ is a nonnegative locally integrable function on $\mathbb R^n$ that takes values in $(0,+\infty)$ almost everywhere.
For $1<p<\infty$, a weight $\omega$ is said to belong to the Muckenhoupt class $A_p$, if there exists a positive constant $C$ such that
\begin{equation*}
\bigg(\frac{1}{m(\mathcal{B})}\int_{\mathcal{B}}\omega(x)\,dx\bigg)
\bigg(\frac{1}{m(\mathcal{B})}\int_{\mathcal{B}}\omega(x)^{1-p'}dx\bigg)^{p-1}\leq C<+\infty
\end{equation*}
holds for every ball $\mathcal{B}$ in $\mathbb R^n$, where $p'=p/{(p-1)}$ denotes the H\"{o}lder conjugate exponent of $p$. For $p=1$, a weight $\omega$ is said to belong to the Muckenhoupt class $A_1$, if there is a positive constant $C$ such that
\begin{equation*}
\frac{1}{m(\mathcal{B})}\int_{\mathcal{B}}\omega(x)\,dx\leq C\cdot\underset{x\in \mathcal{B}}{\mbox{ess\,inf}}\,\omega(x)
\end{equation*}
holds for every ball $B$ in $\mathbb R^n$. The smallest constant $C>0$ such that the above inequality holds is called the $A_1$-constant of $\omega$, and is denoted by $[\omega]_{A_1}$. It is well known that $A_1\subset A_p$, for any $1<p<\infty$. Given a Lebesgue measurable set $E\subseteq\mathbb R^n$, we use the notation
\begin{equation*}
\omega(E):=\int_{E}\omega(x)\,dx
\end{equation*}
to denote the $\omega$-measure of the set $E$. Then we have
\begin{equation}\label{A1property}
\frac{1}{m(\mathcal{B})}\int_{\mathcal{B}}\omega(x)\,dx=\frac{\omega(\mathcal{B})}{m(\mathcal{B})}
\leq [\omega]_{A_1}\cdot\underset{x\in \mathcal{B}}{\mbox{ess\,inf}}\,\omega(x).
\end{equation}
Given a weight $\omega$ defined on $\mathbb R^n$, as usual, the weighted Lebesgue space $L^p(\omega)$ for $1\leq p<\infty$ is defined as the set of all integrable functions $f$ on $\mathbb R^n$ such that
\begin{equation*}
\big\|f\big\|_{L^p(\omega)}:=\bigg(\int_{\mathbb R^n}|f(x)|^p\omega(x)\,dx\bigg)^{1/p}<+\infty.
\end{equation*}

\begin{defin}
Let $\omega$ be a weight function on $\mathbb R^n$.We define the weighted $\mathrm{BMO}$ space as follows:
\begin{equation*}
\mathrm{BMO}(\omega):=\Big\{f\in L^1_{\mathrm{loc}}(\mathbb R^n):\|f\|_{\mathrm{BMO}(\omega)}<+\infty\Big\},
\end{equation*}
where
\begin{equation*}
\|f\|_{\mathrm{BMO}}:=\sup_{\mathcal{B}\subset\mathbb R^n}\frac{1}{\omega(\mathcal{B})}
\int_{\mathcal{B}}|f(x)-f_{\mathcal{B}}|\,dx,
\end{equation*}
and $f_{\mathcal{B}}$ denotes the mean value of the function $f$ over $\mathcal{B}$.
\end{defin}
The weighted space $\mathrm{BMO}(\omega)$ was introduced and studied by Garcia-Cuerva in \cite{garcia1}. In general, for $1\leq p<\infty$, a locally integrable function $f$ is said to be in $\mathrm{BMO}^p(\omega)$, if
\begin{equation*}
\|f\|_{\mathrm{BMO}^{p}(\omega)}:=\sup_{\mathcal{B}}\left(\frac{1}{\omega(\mathcal{B})}\int_{\mathcal{B}}
\big|f(x)-f_{\mathcal{B}}\big|^{p}\omega(x)^{1-p}dx\right)^{1/p}<+\infty.
\end{equation*}
Let $\omega\in A_1$. Garcia-Cuerva in \cite{garcia1} proved that for any $1\leq p<\infty$, there exists an absolute constant $C>0$ such that
\begin{equation*}
\|f\|_{\mathrm{BMO}^{p}(\omega)}\leq C\|f\|_{\mathrm{BMO}(\omega)}.
\end{equation*}
This tells us that the spaces $\mathrm{BMO}^p(\omega)$ coincide, and the norms of $\|\cdot\|_{\mathrm{BMO}^{p}(\omega)}$ are equivalent with respect to different values of $p$, provided that $\omega\in A_1$. See Theorem \ref{lastthm2} in the last section. For more results about weighted BMO spaces, we refer the readers to \cite{bloom,hubei,wang1,wang2} and the references therein.

Motivated by the definition of $\mathrm{BMO}(\omega)$, we now introduce the following space $\mathrm{BLO}(\omega)$, which is a subspace of $\mathrm{BMO}(\omega)$.

\begin{defin}
Let $\omega$ be a weight function on $\mathbb R^n$. We say that a locally integrable function $f$ on $\mathbb R^n$ is in the space $\mathrm{BLO}(\omega)$, if there exists a constant $C>0$ such that for any ball $\mathcal{B}\subset\mathbb R^n$,
\begin{equation*}
\frac{1}{\omega(\mathcal{B})}\int_{\mathcal{B}}\Big[f(x)-\underset{y\in\mathcal{B}}{\mathrm{ess\,inf}}\,f(y)\Big]\,dx\leq C.
\end{equation*}
The minimal constant $C$ as above is defined to be the $\mathrm{BLO}$-constant of $f$ with respect to $\omega$, and is denoted by $\|f\|_{\mathrm{BLO}(\omega)}$.
\end{defin}
It is easy to verify that
\begin{equation}\label{bmoblo}
\|f\|_{\mathrm{BMO}(\omega)}\leq 2\|f\|_{\mathrm{BLO}(\omega)}.
\end{equation}
In fact, for any ball $\mathcal{B}\subset\mathbb R^n$,
\begin{equation*}
\begin{split}
&\frac{1}{\omega(\mathcal{B})}\int_{\mathcal{B}}\big|f(x)-f_{\mathcal{B}}\big|\,dx\\
&=\frac{1}{\omega(\mathcal{B})}\int_{\mathcal{B}}\Big|f(x)-\underset{y\in\mathcal{B}}{\mathrm{ess\,inf}}\,f(y)
+\underset{y\in\mathcal{B}}{\mathrm{ess\,inf}}\,f(y)-f_{\mathcal{B}}\Big|\,dx\\
&\leq\frac{1}{\omega(\mathcal{B})}\int_{\mathcal{B}}\Big[f(x)-\underset{y\in\mathcal{B}}{\mathrm{ess\,inf}}\,f(y)\Big]\,dx
+\frac{m(\mathcal{B})}{\omega(\mathcal{B})}\Big|\underset{y\in\mathcal{B}}{\mathrm{ess\,inf}}\,f(y)-f_{\mathcal{B}}\Big|\\
&\leq\frac{2}{\omega(\mathcal{B})}\int_{\mathcal{B}}\Big[f(x)-\underset{y\in\mathcal{B}}{\mathrm{ess\,inf}}\,f(y)\Big]\,dx
\leq 2\|f\|_{\mathrm{BLO}(\omega)},
\end{split}
\end{equation*}
as desired.
\begin{rem}
\begin{itemize}
  \item It should be pointed out that $\|\cdot\|_{\mathrm{BLO}(\omega)}$ is not a norm and $\mathrm{BLO}(\omega)$ is not a linear space (it is a proper subspace of $\mathrm{BMO}(\omega)$).
  \item For the weighted case, we can also define the above notions with cubes in place of balls, and these two different definitions are actually equivalent.
\end{itemize}
\end{rem}
Similarly, we can also define the general version of weighted BLO spaces. For $1\leq p<\infty$, we say that a locally integrable function $f$ is in $\mathrm{BLO}^p(\omega)$, if
\begin{equation*}
\|f\|_{\mathrm{BLO}^{p}(\omega)}:=\sup_{\mathcal{B}}\left(\frac{1}{\omega(\mathcal{B})}\int_{\mathcal{B}}
\Big[f(x)-\underset{y\in\mathcal{B}}{\mathrm{ess\,inf}}\,f(y)\Big]^{p}\omega(x)^{1-p}dx\right)^{1/p}<+\infty.
\end{equation*}
In this paper, we will establish a weighted version of John--Nirenberg's inequality suitable for the space $\mathrm{BLO}(\omega)$ with $\omega\in A_1$. There exist two positive constants $C_1$ and $C_2$, depending only on the dimension $n$, such that for every cube $\mathcal{Q}$ in $\mathbb R^n$ and every $\lambda>0$,
\begin{equation*}
\begin{split}
&\nu\Big(\Big\{x\in \mathcal{Q}:\Big[f(x)-\underset{y\in\mathcal{Q}}{\mathrm{ess\,inf}}\,f(y)\Big]>\lambda\Big\}\Big)\\
&\leq {C}_1\cdot \nu(\mathcal{Q})\exp\bigg\{-\Big[[\omega]_{A_1}\underset{x\in\mathcal{Q}}{\mathrm{ess\,inf}}\,\omega(x)\Big]^{-1}
\frac{{C}_2\lambda}{\|f\|_{\mathrm{BLO}(\omega)}}\bigg\},
\end{split}
\end{equation*}
where $\nu(x):=\omega(x)^{1-p}$ for $1<p<\infty$. The proof is based on the Calder\'{o}n--Zygmund decomposition and stopping time arguments. We shall use this inequality to prove that the spaces $\mathrm{BLO}^p(\omega)$ coincide, and the $\mathrm{BLO}$-constants of $\|\cdot\|_{\mathrm{BLO}^{p}(\omega)}$ are equivalent with respect to different values of $p$, provided that $\omega\in A_1$. For further details, see Lemma \ref{expblo} and Theorem \ref{weightedblomain} in the last section.

The main purpose of this paper is to establish weighted BMO-BLO estimates for classical and generalized Littlewood--Paley square operators. We also give the corresponding $L^{\infty}$-BLO results in the weighted case.

\section{Main results}
By the weighted theory of vector-valued singular integral operators, we can deduce the following weighted $L^p$-boundedness of Littlewood--Paley square operators. The proof of such kind of results can be found in \cite[Chapter 5]{lu} and \cite[Chapter 7]{wilson2} (see also \cite{xueding} and \cite{kurtz}).

\begin{thm}\label{thmgfunction}
For all $1<p<\infty$ and $\omega\in A_p$, then there exists a positive constant $C>0$, independent of $f$, such that
\begin{equation*}
\big\|\mathcal{G}({f})\big\|_{L^p(\omega)}\leq C\big\|f\big\|_{L^p(\omega)}
\end{equation*}
for every $f\in L^p(\omega)$.
\end{thm}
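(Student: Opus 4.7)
The plan is to realize $\mathcal{G}$ as the pointwise norm of a Hilbert-space-valued singular integral operator and invoke the weighted vector-valued Calder\'on--Zygmund theory. Let $\mathcal{H}=L^2((0,+\infty),dt/t)$ and define
\begin{equation*}
Tf(x)(t):=\psi_t*f(x),\qquad t>0,
\end{equation*}
so that $\mathcal{G}(f)(x)=\|Tf(x)\|_{\mathcal{H}}$, and consequently $\|\mathcal{G}(f)\|_{L^p(\omega)}=\|Tf\|_{L^p(\omega;\mathcal{H})}$. The associated kernel $K(x,y)\in\mathcal{H}$ is given by $K(x,y)(t)=\psi_t(x-y)$, and it suffices to show that $T$ is strongly bounded on $L^2(\mathbb{R}^n;\mathcal{H})$ and that $K$ satisfies the standard Hilbert-space-valued H\"ormander conditions. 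Once these are established, the weighted theory of vector-valued singular integrals (see \cite[Chapter 5]{lu}, \cite[Chapter 7]{wilson2}) gives the $L^p(\omega)$ boundedness for every $1<p<\infty$ and $\omega\in A_p$.

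For the $L^2$ boundedness, I would apply Plancherel's theorem to the definition of $\|Tf\|_{L^2(\mathcal{H})}^2$ to reduce matters to showing
\begin{equation*}
\int_0^\infty\big|\widehat{\psi}(t\xi)\big|^2\frac{dt}{t}\leq C\quad\text{uniformly in }\xi\neq 0.
\end{equation*}
This uniform bound follows from (P1), (P2), (P3): the vanishing condition $\widehat{\psi}(0)=0$ together with the size condition \eqref{psi2} gives $|\widehat{\psi}(t\xi)|\leq C|t\xi|^{\alpha}$ for small $|t\xi|$, while the smoothness condition \eqref{psi3} produces enough decay of $\widehat{\psi}$ at infinity to make the tail integrable. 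After a change of variables $s=t|\xi|$, the integrand becomes direction-dependent only through $\xi/|\xi|$, and the previous two bounds yield a finite answer.

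The kernel estimates are a direct calculation using the hypotheses on $\psi$. For the size condition, \eqref{psi2} gives
\begin{equation*}
|\psi_t(x-y)|\leq \frac{C\,t^{\delta}}{(t+|x-y|)^{n+\delta}},
\end{equation*}
and squaring, multiplying by $dt/t$, integrating in $t$, and substituting $u=t/|x-y|$ leads to $\|K(x,y)\|_{\mathcal{H}}\leq C|x-y|^{-n}$. For the regularity condition, writing $z=(x-y)/t$ and $h=(y-y')/t$, the hypothesis $2|y-y'|\leq |x-y|$ is exactly $2|h|\leq |z|$, so (P3) applies and yields
\begin{equation*}
|\psi_t(x-y')-\psi_t(x-y)|\leq \frac{C\,|y-y'|^{\gamma}\,t^{\delta}}{(t+|x-y|)^{n+\delta+\gamma}}.
\end{equation*}
The same change of variables then gives $\|K(x,y)-K(x,y')\|_{\mathcal{H}}\leq C|y-y'|^{\gamma}|x-y|^{-n-\gamma}$, which is the H\"ormander smoothness condition.

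The main technical obstacle is verifying the $L^2$ boundedness: this is where conditions (P1), (P2), (P3) must all be used together, and one has to show both that the Fourier integral $\int_0^{\infty}|\widehat{\psi}(t\xi)|^2\,dt/t$ converges at $t\to 0$ and at $t\to\infty$, and that the result is bounded uniformly in $\xi$. The remaining steps are routine consequences of the weighted Calder\'on--Zygmund theorem for Hilbert-space-valued operators, applied with the kernel $K$ verified above.
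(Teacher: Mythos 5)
Your proposal is correct and follows exactly the route the paper itself indicates: the paper does not prove Theorem \ref{thmgfunction} but simply defers to the weighted theory of vector-valued singular integral operators (citing \cite[Chapter 5]{lu}, \cite[Chapter 7]{wilson2}, \cite{xueding}, \cite{kurtz}), and your realization of $\mathcal{G}$ as an $L^2((0,\infty),dt/t)$-valued Calder\'on--Zygmund operator, with $L^2$ boundedness via Plancherel from (P1)--(P3) and the kernel size and H\"older estimates derived from \eqref{psi2} and \eqref{psi3}, is precisely the standard argument implemented in those references.
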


\begin{thm}\label{thmsfunction}
For all $1<p<\infty$ and $\omega\in A_p$, then there exists a positive constant $C>0$, independent of $f$, such that
\begin{equation*}
\big\|\mathcal{S}({f})\big\|_{L^p(\omega)}\leq C\big\|f\big\|_{L^p(\omega)}
\end{equation*}
for every $f\in L^p(\omega)$.
\end{thm}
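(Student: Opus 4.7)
The plan is to realize $\mathcal{S}(f)(x)$ as the norm of a Hilbert-space-valued operator and then invoke the weighted theory of vector-valued Calder\'on--Zygmund operators, in the spirit of \cite{lu} and \cite{wilson2}.

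First I would set up the vector-valued framework. Let $\Gamma := \{(z,t)\in \mathbb{R}^{n+1}_+ : |z|<t\}$ denote the cone with vertex at the origin, and set $H := L^2(\Gamma, dz\,dt/t^{n+1})$. After the substitution $y=x+z$ in the definition of $\mathcal{S}(f)$, one has
$$[\mathcal{S}(f)(x)]^2 = \iint_{\Gamma} \big|\psi_t*f(x+z)\big|^2\,\frac{dz\,dt}{t^{n+1}},$$
so defining $T$ from scalar functions on $\mathbb{R}^n$ to $H$-valued functions by $(Tf)(x)(z,t) := (\psi_t*f)(x+z)$, one obtains $\mathcal{S}(f)(x) = \|(Tf)(x)\|_H$. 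The target weighted inequality is thus equivalent to $\|Tf\|_{L^p(\omega;H)} \le C\|f\|_{L^p(\omega)}$.

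Next I would establish the unweighted $L^2$ bound for $T$. By Fubini, $\int_{|y-x|<t}dx = c_n t^n$, which reduces $\|\mathcal{S}(f)\|_{L^2}^2$ to $c_n\|\mathcal{G}(f)\|_{L^2}^2$, finite by (\ref{12}). The operator $T$ has the $H$-valued kernel $K(x,u)(z,t) := \psi_t(x+z-u)\chi_\Gamma(z,t)$, and the task is to verify the H\"ormander-type smoothness condition in $H$-norm, namely that for all $u_0\in\mathbb{R}^n$, $r>0$ and $u\in B(u_0,r)$,
$$\int_{|x-u_0|>2r}\|K(x,u)-K(x,u_0)\|_H\,dx \le C.$$

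To check this, I would split the integration over $\Gamma$ into the regions $\{t\le|x-u_0|/4\}$ and $\{t>|x-u_0|/4\}$. On the first region, condition (P3) applies to $\psi_t(x+z-u)-\psi_t(x+z-u_0)$, since $|x+z-u_0|\gtrsim|x-u_0|\gg |u-u_0|$ there; integrating the resulting pointwise bound against $dz\,dt/t^{n+1}$ over the cone yields the Lipschitz-type estimate $\|K(x,u)-K(x,u_0)\|_H \lesssim |u-u_0|^\gamma/|x-u_0|^{n+\gamma}$. On the second region the size condition (P2) applied to each of $\psi_t(x+z-u)$, $\psi_t(x+z-u_0)$ suffices, the $t$-integral converging thanks to $\delta>0$. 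Combining both pieces and integrating in $x$ over $\{|x-u_0|>2r\}$ gives a bound uniform in $u,u_0,r$.

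Having verified the $L^2$-boundedness of $T$ together with the $H$-valued H\"ormander condition on $K$, the conclusion follows at once from the weighted vector-valued Calder\'on--Zygmund theorem (see \cite[Chapter 5]{lu} and \cite[Chapter 7]{wilson2}): one obtains $\|Tf\|_{L^p(\omega;H)}\le C\|f\|_{L^p(\omega)}$ for every $1<p<\infty$ and every $\omega\in A_p$, which is exactly the desired weighted bound for $\mathcal{S}(f)$. The main obstacle is the careful estimation of $\|K(x,u)-K(x,u_0)\|_H$ in a form that is uniform in the cone geometry: one must track how $|x+z-u_0|$ compares with $|x-u_0|$ as $(z,t)$ ranges over $\Gamma$, and ensure that the extra decay $\gamma>0$ in (P3) is enough to make the $t$-integral in the $H$-norm convergent after the change $r=|x+z-u_0|/t$.
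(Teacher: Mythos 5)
The paper does not actually prove this theorem: it is quoted from the literature (\cite[Chapter 5]{lu}, \cite[Chapter 7]{wilson2}, \cite{xueding}, \cite{kurtz}), and your vector-valued Calder\'on--Zygmund strategy is exactly the argument those references carry out, so the overall plan is the right one. The Fubini reduction of the $L^2$ bound to $\mathcal{G}$, the identification $\mathcal{S}(f)(x)=\|(Tf)(x)\|_H$, and your treatment of the region $t\le|x-u_0|/4$ (where \textbf{(P3)} applies and yields the Lipschitz-type bound $|u-u_0|^{\gamma}/|x-u_0|^{n+\gamma}$) are all correct.

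The gap is in the region $t>|x-u_0|/4$. There you propose to bound the difference by the sum of the two size estimates. But on the cone the best uniform bound from \textbf{(P2)} is $|\psi_t(w)|\lesssim t^{-n}$, and this is sharp when $|w|\lesssim t$, which does occur for $(z,t)$ in that region; consequently $\iint_{|z|<t,\,t>|x-u_0|/4}\big(|\psi_t(x+z-u)|+|\psi_t(x+z-u_0)|\big)^2\,\frac{dz\,dt}{t^{n+1}}\lesssim\int_{|x-u_0|/4}^{\infty}t^{-n}\,\frac{dt}{t^{n+1}}\approx|x-u_0|^{-2n}$. This contributes $|x-u_0|^{-n}$ to $\|K(x,u)-K(x,u_0)\|_H$ with no gain in $|u-u_0|$, and $\int_{|x-u_0|>2r}|x-u_0|^{-n}\,dx=+\infty$, so neither the H\"ormander integral condition nor the pointwise standard estimate follows from this step. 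The standard repair is to use \textbf{(P3)} on this region as well, on the subset where $|x+z-u_0|\ge 2|u-u_0|$: since $t\gtrsim|x-u_0|$ there, the gain $(|u-u_0|/t)^{\gamma}$ gives a contribution $\lesssim|u-u_0|^{\gamma}/|x-u_0|^{n+\gamma}$; on the exceptional subset $|x+z-u_0|<2|u-u_0|$ the $z$-section has measure $\lesssim|u-u_0|^{n}$, and the crude bound $t^{-n}$ then yields $\lesssim|u-u_0|^{n/2}/|x-u_0|^{3n/2}$. Both pieces are integrable in $x$ over $\{|x-u_0|>2r\}$ uniformly in $r$. A further small point: for the weighted $A_p$ conclusion you should record the pointwise standard-kernel estimates ($\|K(x,u)\|_H\lesssim|x-u|^{-n}$ together with the Lipschitz bound above), since the integral H\"ormander condition alone is not sufficient for $L^p(\omega)$ boundedness for all $\omega\in A_p$; your computation in fact produces the pointwise bounds, so only the statement of what is being invoked needs adjusting.
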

These weighted $L^p$ estimates were extended by many authors to the multilinear case, one can see \cite{shi,xueqing,xuepeng} for more details.

Inspired by Theorem \ref{thm11} and Theorem \ref{thm12}, we consider the behavior on weighted $L^{\infty}$ and weighted BMO spaces for Littlewood--Paley square operators. Based on Theorem \ref{thmgfunction} and Theorem \ref{thmsfunction}, as well as some properties of $A_p$ weights and weighted BMO spaces, we establish the existence and boundedness of generalized Littlewood--Paley square operators on weighted BMO spaces, including the $g$-function, Lusin's area integral and Stein's $g^{\ast}_{\lambda}$-function for $\lambda>3+{(2\delta+2\gamma)}/n$. It is proved that if the above operators are finite for one point, then they are finite almost everywhere in $\mathbb R^n$. Moreover, these operators are bounded from $\mathrm{BMO}(\omega)$ into $\mathrm{BLO}(\omega)$ when $\omega\in A_1$. We will also give the $L^{\infty}$-BLO results for Littlewood--Paley square operators, such kind of results can be viewed as a generalization of Theorem \ref{thm11} in the weighted case.

The main results of this paper are stated as follows.

\begin{thm}\label{mainthm1}
For any ${f}\in \mathrm{BMO}(\omega)$ and $\omega\in A_1$, then $\mathcal{G}({f})$ is either infinite everywhere or finite almost everywhere, and in the latter case, there exists a positive constant $C$, independent of ${f}$, such that
\begin{equation*}
\big\|\mathcal{G}({f})\big\|_{\mathrm{BLO}(\omega)}\leq C\big\|f\big\|_{\mathrm{BMO}(\omega)}.
\end{equation*}
\end{thm}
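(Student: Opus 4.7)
My plan follows a weighted adaptation of the Meng--Yang/Sun--Yabuta strategy for BMO-type estimates of Littlewood--Paley operators. Fix an arbitrary ball $\mathcal{B}=B(x_0,r)\subset\mathbb R^n$ and decompose $f=f_1+f_2+c_{\mathcal{B}}$ with $c_{\mathcal{B}}:=f_{2\mathcal{B}}$, $f_1:=(f-c_{\mathcal{B}})\chi_{2\mathcal{B}}$, and $f_2:=(f-c_{\mathcal{B}})\chi_{(2\mathcal{B})^{\complement}}$. By the vanishing condition \eqref{psi1}, $\psi_t\ast c_{\mathcal{B}}\equiv 0$, so sublinearity of $\mathcal{G}$ gives both $\mathcal{G}(f)\le\mathcal{G}(f_1)+\mathcal{G}(f_2)$ and the reverse $\mathcal{G}(f_2)\le\mathcal{G}(f)+\mathcal{G}(f_1)$ pointwise. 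The analysis then reduces to two estimates.

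The first is the local bound $\frac{1}{\omega(\mathcal{B})}\int_{\mathcal{B}}\mathcal{G}(f_1)(x)\,dx\le C\|f\|_{\mathrm{BMO}(\omega)}$. I would prove it by applying H\"{o}lder's inequality with an exponent $p>1$ against the weight $\omega^{1-p}$: since $\omega\in A_1\subset A_{p'}$ we have $\omega^{1-p}\in A_p$, so Theorem \ref{thmgfunction} gives the $L^p(\omega^{1-p})$-boundedness of $\mathcal{G}$, and the resulting $L^p(\omega^{1-p})$-norm of $f_1$ is controlled by $\|f\|_{\mathrm{BMO}(\omega)}$ through Garcia-Cuerva's equivalence $\|f\|_{\mathrm{BMO}^p(\omega)}\le C\|f\|_{\mathrm{BMO}(\omega)}$ combined with the doubling $\omega(2\mathcal{B})\le C\omega(\mathcal{B})$. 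The second is the H\"{o}lder-type oscillation
\[
|\mathcal{G}(f_2)(x)-\mathcal{G}(f_2)(x_0)|\le C\|f\|_{\mathrm{BMO}(\omega)}\cdot\frac{\omega(\mathcal{B})}{m(\mathcal{B})},\qquad x\in\mathcal{B}.
\]
By Minkowski's inequality in $L^2(dt/t)$ the left-hand side is controlled by $\int_{(2\mathcal{B})^{\complement}}|f(y)-f_{2\mathcal{B}}|\bigl(\int_0^{\infty}|\psi_t(x-y)-\psi_t(x_0-y)|^2\,dt/t\bigr)^{1/2}dy$; the smoothness condition \eqref{psi3}, valid because $2|x-x_0|\le|x_0-y|$ for $y\in(2\mathcal{B})^{\complement}$, makes the inner expression of size $|x-x_0|^{\gamma}|x_0-y|^{-n-\gamma}$ after performing the $t$-integral. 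A dyadic annular decomposition of $(2\mathcal{B})^{\complement}$ together with the weighted-BMO telescoping $|f_{2^{k+1}\mathcal{B}}-f_{2\mathcal{B}}|\le Ck\|f\|_{\mathrm{BMO}(\omega)}\omega(\mathcal{B})/m(\mathcal{B})$ (which relies on the $A_1$ comparison $\omega(2^{k+1}\mathcal{B})/m(2^{k+1}\mathcal{B})\le C\omega(\mathcal{B})/m(\mathcal{B})$) then yields the claim.

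Combining these two estimates produces, for a.e.\ $x,\tilde y\in\mathcal{B}$,
\[
\mathcal{G}(f)(x)-\mathcal{G}(f)(\tilde y)\le\mathcal{G}(f_1)(x)+\mathcal{G}(f_1)(\tilde y)+2C\|f\|_{\mathrm{BMO}(\omega)}\frac{\omega(\mathcal{B})}{m(\mathcal{B})}.
\]
The main obstacle is then the transition from $\mathcal{G}(f)(\tilde y)$ to $\mathrm{ess\,inf}_{\mathcal{B}}\mathcal{G}(f)$ while preserving the correct constant: I need $\tilde y\in\mathcal{B}$ simultaneously approximating the essential infimum of $\mathcal{G}(f)$ on $\mathcal{B}$ to within $\varepsilon$ and satisfying the pointwise control $\mathcal{G}(f_1)(\tilde y)\le C\|f\|_{\mathrm{BMO}(\omega)}\omega(\mathcal{B})/m(\mathcal{B})$, so that after integrating in $x$, dividing by $\omega(\mathcal{B})$, and multiplying the $\tilde y$-term by $m(\mathcal{B})/\omega(\mathcal{B})$, exactly the bound $C\|f\|_{\mathrm{BMO}(\omega)}$ emerges. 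Such $\tilde y$ is produced by a Chebyshev/distribution-function argument: the $L^p(\omega^{1-p})$-estimate on $\mathcal{G}(f_1)$ combined with the $A_1$ lower bound $\omega\ge\omega(\mathcal{B})/([\omega]_{A_1}m(\mathcal{B}))$ a.e.\ on $\mathcal{B}$ forces $\{\tilde y\in\mathcal{B}:\mathcal{G}(f_1)(\tilde y)>\lambda\}$ to have small Lebesgue measure once $\lambda$ is a large multiple of $\|f\|_{\mathrm{BMO}(\omega)}\omega(\mathcal{B})/m(\mathcal{B})$, and intersecting this good set with the positive-measure level set $\{\mathcal{G}(f)<\mathrm{ess\,inf}_{\mathcal{B}}\mathcal{G}(f)+\varepsilon\}$ yields the required $\tilde y$. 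Finally, the ``infinite everywhere vs.\ finite a.e." dichotomy comes from the same building blocks: the oscillation estimate forces $\mathcal{G}(f_2)$ to be either finite at every point of $\mathcal{B}$ or infinite everywhere on $\mathcal{B}$, while $\mathcal{G}(f_1)$ is a.e.\ finite by the local $L^p(\omega^{1-p})$ bound; hence finiteness of $\mathcal{G}(f)$ at a single point transfers through a suitable choice of ball to finiteness of $\mathcal{G}(f_2)$ on that ball, and then to a.e.\ finiteness of $\mathcal{G}(f)\le\mathcal{G}(f_1)+\mathcal{G}(f_2)$, propagating to all of $\mathbb R^n$ by covering.
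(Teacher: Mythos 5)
Your local estimate for $\mathcal{G}(f_1)$ and your oscillation estimate for $\mathcal{G}(f_2)$ are both sound (the $t$-integral $\int_0^{\infty}t^{2\delta-1}(t+|x_0-y|)^{-2(n+\delta+\gamma)}\,dt$ does converge to $\lesssim|x_0-y|^{-2(n+\gamma)}$ without any restriction on $t$, since $f_2$ lives outside $2\mathcal{B}$), and together they would prove $\mathrm{BMO}(\omega)$-boundedness, where one compares $\mathcal{G}(f)$ to the constant $\mathcal{G}(f_2)(x_0)$ and takes absolute values. But the step you yourself flag as the main obstacle --- passing from $\mathcal{G}(f)(\tilde y)$ to $\mathrm{ess\,inf}_{\mathcal{B}}\mathcal{G}(f)$ --- is a genuine gap, and the Chebyshev selection does not close it. Markov's inequality applied to $\int_{\mathcal{B}}\mathcal{G}(f_1)\,dx\lesssim\omega(\mathcal{B})\|f\|_{\mathrm{BMO}(\omega)}$ gives a good set $G=\{\tilde y\in\mathcal{B}:\mathcal{G}(f_1)(\tilde y)\le\lambda\}$ with $m(G)\ge m(\mathcal{B})/2$, but the level set $\{\mathcal{G}(f)<\mathrm{ess\,inf}_{\mathcal{B}}\mathcal{G}(f)+\varepsilon\}$ is only guaranteed to have \emph{positive} measure, which may be arbitrarily small, so the two sets need not intersect; no choice of $\lambda$ fixes this. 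What your argument actually yields is a bound on $\mathcal{G}(f)(x)-\mathrm{ess\,inf}_{G}\mathcal{G}(f)$, which is \emph{smaller} than the BLO quantity $\mathcal{G}(f)(x)-\mathrm{ess\,inf}_{\mathcal{B}}\mathcal{G}(f)$, since $\mathrm{ess\,inf}_G\ge\mathrm{ess\,inf}_{\mathcal{B}}$. The root cause is that in your purely spatial decomposition the piece with small oscillation, $\mathcal{G}(f_2)$, is not a pointwise minorant of $\mathcal{G}(f)$: you only have $\mathcal{G}(f)\ge\mathcal{G}(f_2)-\mathcal{G}(f_1)$, and $\mathcal{G}(f_1)$ is not pointwise controlled, so you cannot rule out that $\mathrm{ess\,inf}_{\mathcal{B}}\mathcal{G}(f)$ sits far below $\mathcal{G}(f_2)(x_0)$.

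The paper's proof supplies exactly the missing ingredient by cutting the defining integral in the $t$-variable at $t=r$ rather than (only) in space: writing $\mathcal{G}(f)^2=\mathcal{G}_0(f)^2+\mathcal{G}_\infty(f)^2$ with $\mathcal{G}_\infty(f)(x)=(\int_r^{\infty}|\psi_t*f(x)|^2\,dt/t)^{1/2}$, one gets \emph{both} that $\mathcal{G}_\infty(f)\le\mathcal{G}(f)$ pointwise (so $\mathrm{ess\,inf}_{\mathcal{B}}\mathcal{G}(f)\ge\mathrm{ess\,inf}_{\mathcal{B}}\mathcal{G}_\infty(f)$, which is the one-sided inequality BLO requires) \emph{and} that $\mathcal{G}_\infty(f)$ --- of the full $f$, not of a truncation --- oscillates by at most $C\|f\|_{\mathrm{BMO}(\omega)}\,\mathrm{ess\,inf}_{\mathcal{B}}\omega$ on $\mathcal{B}$, because for $t>r$ the near part of $f$ is tamed by $|\psi_t|\lesssim t^{-n}$ and $\int_r^{\infty}t^{-2n-1}\,dt\lesssim r^{-2n}$. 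This reduces the problem to $\frac{1}{\omega(\mathcal{B})}\int_{\mathcal{B}}\mathcal{G}_0(f)\,dx+\frac{1}{\omega(\mathcal{B})}\int_{\mathcal{B}}\mathrm{ess\,sup}_{y\in\mathcal{B}}|\mathcal{G}_\infty(f)(x)-\mathcal{G}_\infty(f)(y)|\,dx$, and only at that point is the spatial decomposition $f=f^1+f^2+f^3$ invoked. Your two technical estimates can be recycled inside that framework, but without the temporal splitting (or some substitute pointwise minorant with controlled oscillation) the proposal does not prove the BLO bound.
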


\begin{thm}\label{mainthm2}
For any ${f}\in \mathrm{BMO}(\omega)$ and $\omega\in A_1$, then $\mathcal{S}({f})$ is either infinite everywhere or finite almost everywhere, and in the latter case, there exists a positive constant $C$, independent of ${f}$, such that
\begin{equation*}
\big\|\mathcal{S}({f})\big\|_{\mathrm{BLO}(\omega)}\leq C\big\|f\big\|_{\mathrm{BMO}(\omega)}.
\end{equation*}
\end{thm}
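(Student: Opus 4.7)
The plan is to parallel the proof of Theorem \ref{mainthm1} and to absorb the extra work caused by the cone structure of $\mathcal{S}$. Fix a ball $\mathcal{B}=B(x_0,r)$ and split
$$f=(f-f_{2\mathcal{B}})\chi_{2\mathcal{B}}+(f-f_{2\mathcal{B}})\chi_{\mathbb R^n\setminus 2\mathcal{B}}+f_{2\mathcal{B}}=:f_1+f_2+f_{2\mathcal{B}}.$$
Since $\int_{\mathbb R^n}\psi=0$ by \eqref{psi1}, $\psi_t*f_{2\mathcal{B}}\equiv 0$ and hence $\psi_t*f=\psi_t*f_1+\psi_t*f_2$. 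Viewing $\mathcal{S}$ as a Minkowski norm on the Hilbert space $L^{2}(\Gamma(x),dy\,dt/t^{n+1})$ yields both $\mathcal{S}(f)(x)\le\mathcal{S}(f_1)(x)+\mathcal{S}(f_2)(x)$ and the reverse inequality $\mathcal{S}(f)(y)\ge\mathcal{S}(f_2)(y)-\mathcal{S}(f_1)(y)$; subtracting gives, for all $x,y\in\mathcal{B}$,
$$\mathcal{S}(f)(x)-\mathcal{S}(f)(y)\le\mathcal{S}(f_1)(x)+\mathcal{S}(f_1)(y)+|\mathcal{S}(f_2)(x)-\mathcal{S}(f_2)(y)|.$$
For each $\varepsilon>0$, pick $y_\varepsilon\in\mathcal{B}$ with $\mathcal{S}(f)(y_\varepsilon)<\mathrm{ess\,inf}_{y\in\mathcal{B}}\mathcal{S}(f)(y)+\varepsilon$ and $\mathcal{S}(f_1)(y_\varepsilon)<+\infty$ (admissible since $f_1\in L^{2}(\omega^{-1})$ and $\omega^{-1}\in A_2$ when $\omega\in A_1$, whence $\mathcal{S}(f_1)<\infty$ a.e.\ by Theorem \ref{thmsfunction}); substituting $y=y_\varepsilon$, averaging in $x$ against $dx/\omega(\mathcal{B})$ over $\mathcal{B}$, and letting $\varepsilon\downarrow 0$ reduces the theorem to $\le C\|f\|_{\mathrm{BMO}(\omega)}$ bounds for (I) the $\omega(\mathcal{B})^{-1}dx$-average of $\mathcal{S}(f_1)$ on $\mathcal{B}$, (II) the boundary term $m(\mathcal{B})\omega(\mathcal{B})^{-1}\mathcal{S}(f_1)(y_\varepsilon)$, and (III) the $\omega(\mathcal{B})^{-1}dx$-average on $\mathcal{B}$ of $|\mathcal{S}(f_2)(x)-\mathcal{S}(f_2)(y_\varepsilon)|$.

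Terms (I) and (II) are routine. Since $\omega\in A_1\subset A_2$ implies $\omega^{-1}\in A_2$, Theorem \ref{thmsfunction} with $p=2$ and weight $\omega^{-1}$ gives $\|\mathcal{S}(f_1)\|_{L^{2}(\omega^{-1})}\le C\|f_1\|_{L^{2}(\omega^{-1})}$, and the $\mathrm{BMO}^{2}(\omega)$-equivalence $\|f\|_{\mathrm{BMO}^{2}(\omega)}\le C\|f\|_{\mathrm{BMO}(\omega)}$ from the previous section bounds the right-hand side by $C\omega(2\mathcal{B})^{1/2}\|f\|_{\mathrm{BMO}(\omega)}$; the elementary H\"{o}lder step $\int_{\mathcal{B}}\mathcal{S}(f_1)\,dx\le\|\mathcal{S}(f_1)\|_{L^{2}(\omega^{-1})}\,\omega(\mathcal{B})^{1/2}$ and the doubling property $\omega(2\mathcal{B})\le C\omega(\mathcal{B})$ then deliver (I). The pointwise value in (II) is absorbed by first restricting $y_{\varepsilon}$ to the positive-$\omega$-measure subset of $\mathcal{B}$ on which the essinf is almost attained, and then averaging over that subset, converting (II) into another instance of the (I)-type H\"{o}lder estimate.

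The main obstacle is term (III), where the cone structure creates genuine new difficulties not present for $\mathcal{G}$. I split the defining integral at $t=2r$. For $t\le 2r$ and $(y,t)\in\Gamma(x)$ with $x\in\mathcal{B}$, one has $|y-x_{0}|<3r$, so the size condition \eqref{psi2} gives $|\psi_{t}(y-z)|\le C\,t^{\delta}/(r+|z-x_{0}|)^{n+\delta}$ for $z\in\mathbb R^{n}\setminus 2\mathcal{B}$, and combining this with the standard $\mathrm{BMO}(\omega)$-decay
$$\int_{\mathbb R^{n}\setminus 2\mathcal{B}}\frac{|f(z)-f_{2\mathcal{B}}|}{(r+|z-x_{0}|)^{n+\delta}}\,dz\le C\,r^{-\delta}\|f\|_{\mathrm{BMO}(\omega)}$$
yields the uniform estimate $|\psi_{t}*f_{2}(y)|\le C\,t^{\delta}r^{-\delta}\|f\|_{\mathrm{BMO}(\omega)}$ on the small-$t$ cone. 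Squaring and integrating against $|\{|s|<t\}|\cdot t^{-(n+1)}=c\,t^{-1}$ in $t\in(0,2r)$ yields $\int_{0}^{2r}t^{2\delta-1}\,dt\le Cr^{2\delta}$, which exactly cancels the $r^{-2\delta}$ and bounds the small-$t$ contribution to both $\mathcal{S}(f_{2})(x)$ and $\mathcal{S}(f_{2})(y_{\varepsilon})$ by $C\|f\|_{\mathrm{BMO}(\omega)}$. For the large-$t$ part, I parametrize both cone integrals on the common domain $\{|s|<t,\,t>2r\}$ via $y=x+s$ and $y=y_{\varepsilon}+s$ respectively; reverse Minkowski then controls the large-$t$ piece of $|\mathcal{S}(f_{2})(x)-\mathcal{S}(f_{2})(y_{\varepsilon})|$ by
$$\Bigl(\iint_{|s|<t,\,t>2r}\bigl|\psi_{t}*f_{2}(x+s)-\psi_{t}*f_{2}(y_{\varepsilon}+s)\bigr|^{2}\frac{ds\,dt}{t^{n+1}}\Bigr)^{1/2},$$
and the smoothness condition \eqref{psi3} applies to $\psi_{t}$ (its hypothesis $2|h|\le|u|$ holding after rescaling, since $|x-y_{\varepsilon}|\le 2r<t$), producing an extra factor $(|x-y_{\varepsilon}|/t)^{\gamma}$ that renders the double integral absolutely convergent and bounded by $C\|f\|_{\mathrm{BMO}(\omega)}^{2}$. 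Finally, the finiteness dichotomy drops out of the same decomposition: the small-$t$ uniform bound on $\mathcal{S}(f_{2})$ propagates the finiteness of $\mathcal{S}(f)$ from any single point to a whole neighborhood, while $\mathcal{S}(f_{1})$ is a.e.\ finite by Theorem \ref{thmsfunction}.
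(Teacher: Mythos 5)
There is a genuine structural gap in your reduction, located in term (II). Because you decompose only the \emph{function} ($f=f_1+f_2+f_{2\mathcal{B}}$) and not the \emph{operator}, the lower bound $\mathcal{S}(f)(y)\ge \mathcal{S}(f_2)(y)-\mathcal{S}(f_1)(y)$ forces you to evaluate $\mathcal{S}(f_1)$ pointwise at the chosen point $y_\varepsilon$. But $\mathcal{S}(f_1)$ is only finite almost everywhere and lies in $L^2(\omega^{-1})$ on $\mathcal{B}$; it admits no pointwise (or essential supremum) bound of the form $C\,\omega(\mathcal{B})m(\mathcal{B})^{-1}\|f\|_{\mathrm{BMO}(\omega)}$, and the set $E_\varepsilon$ where $\mathcal{S}(f)$ nearly attains its essential infimum can have arbitrarily small measure and can sit anywhere in $\mathcal{B}$. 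Your proposed repair --- averaging over $E_\varepsilon$ and invoking the (I)-type H\"older estimate --- fails precisely because the $L^2(\omega^{-1})$ control of $\mathcal{S}(f_1)$ over all of $\mathcal{B}$ gives no control of its average over a subset of tiny measure (the H\"older step produces a factor $\omega(E_\varepsilon)^{1/2}/m(E_\varepsilon)$, or $\omega^{-1}(E_\varepsilon)^{-1/2}$, which is unbounded). This is exactly why the paper splits the \emph{$t$-integral} at $t=r$, writing $\mathcal{S}(f)\le \mathcal{S}_0(f)+\mathcal{S}_\infty(f)$: since $\mathcal{S}_0(f)\ge 0$, one has $\mathrm{ess\,inf}_{\mathcal{B}}\,\mathcal{S}(f)\ge \mathrm{ess\,inf}_{\mathcal{B}}\,\mathcal{S}_\infty(f)$, so the local-in-$t$ piece is discarded from the infimum side and only ever appears averaged in $x$ over $\mathcal{B}$ (where your (I)-type argument applies to $\mathcal{S}_0(f^2)$), while the global-in-$t$ piece $\mathcal{S}_\infty(f)$ has \emph{uniformly} bounded oscillation for all $x,y\in\mathcal{B}$. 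Without some such device that makes the uncontrolled piece nonnegative and removable from the infimum, your route cannot close.

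Two secondary points. First, your ``standard $\mathrm{BMO}(\omega)$-decay'' inequality $\int_{\mathbb R^n\setminus 2\mathcal{B}}|f(z)-f_{2\mathcal{B}}|(r+|z-x_0|)^{-n-\delta}dz\le Cr^{-\delta}\|f\|_{\mathrm{BMO}(\omega)}$ is missing the factor $[\omega]_{A_1}\,\mathrm{ess\,inf}_{\mathcal{B}}\,\omega$ coming from Lemma \ref{wanglemma1}; as written it is false (test $\omega\equiv c$ with $c$ large), and that factor is exactly what must cancel against $m(\mathcal{B})/\omega(\mathcal{B})$ via \eqref{omegawh} at the end. Second, your justification for applying the smoothness condition \eqref{psi3} in the large-$t$ part is incorrect: $|x-y_\varepsilon|\le 2r<t$ does not imply $2|x-y_\varepsilon|\le|y_\varepsilon+s-\xi|$, since for $|s|$ comparable to $t$ the point $y_\varepsilon+s$ can be arbitrarily close to $\xi\in\mathbb R^n\setminus 2\mathcal{B}$; the region where the smoothness hypothesis fails must be treated separately with the size condition (the paper's splitting of the $\xi$-integral at $8\mathcal{B}$ is aimed at this, and after subtracting $f_{8\mathcal{B}}$ from the full $f$ the near region is handled by the crude bound $|\psi_t|\lesssim t^{-n}$).
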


\begin{thm}\label{mainthm3}
Suppose that $(\lambda-3)n>2\delta+2\gamma$. For any ${f}\in \mathrm{BMO}(\omega)$ and $\omega\in A_1$, then $\mathcal{G}^{\ast}_{\lambda}({f})$ is either infinite everywhere or finite almost everywhere, and in the latter case, there exists a positive constant $C$, independent of ${f}$, such that
\begin{equation*}
\big\|\mathcal{G}^{\ast}_{\lambda}({f})\big\|_{\mathrm{BLO}(\omega)}\leq C\big\|f\big\|_{\mathrm{BMO}(\omega)}.
\end{equation*}
\end{thm}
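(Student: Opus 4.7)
The plan is to adapt the splitting-plus-dichotomy template underlying Theorems~\ref{mainthm1} and \ref{mainthm2}, handling the extra difficulty that in $\mathcal{G}^*_\lambda$ the dependence on the point $x$ enters only through the ambient kernel $(t/(t+|x-y|))^{\lambda n}$, not through $\psi_t*f(y)$ itself. Fix a ball $\mathcal{B}=B(x_0,r_0)$ and let $\mathcal{B}^*=2\mathcal{B}$. Write $f=f_1+f_2+f_{\mathcal{B}^*}$ with $f_1:=(f-f_{\mathcal{B}^*})\chi_{\mathcal{B}^*}$ and $f_2:=(f-f_{\mathcal{B}^*})\chi_{(\mathcal{B}^*)^{\complement}}$; by the vanishing moment \eqref{psi1}, $\mathcal{G}^*_\lambda(f_{\mathcal{B}^*})\equiv 0$. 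Two applications of sublinearity give $\mathcal{G}^*_\lambda(f)(x)\le\mathcal{G}^*_\lambda(f_1)(x)+\mathcal{G}^*_\lambda(f_2)(x)$ and $\mathcal{G}^*_\lambda(f_2)(z_0)\le\mathcal{G}^*_\lambda(f)(z_0)+\mathcal{G}^*_\lambda(f_1)(z_0)$. Choosing $z_0\in\mathcal{B}$ to approximate the essential infimum of $\mathcal{G}^*_\lambda(f)$ over $\mathcal{B}$, these combine into the controlling inequality
$$\mathcal{G}^*_\lambda(f)(x)-\mathrm{ess\,inf}_{y\in\mathcal{B}}\mathcal{G}^*_\lambda(f)(y) \le \mathcal{G}^*_\lambda(f_1)(x)+\mathcal{G}^*_\lambda(f_1)(z_0)+\bigl|\mathcal{G}^*_\lambda(f_2)(x)-\mathcal{G}^*_\lambda(f_2)(z_0)\bigr|.$$

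The two local terms are handled via the standard weighted $L^p$-boundedness of $\mathcal{G}^*_\lambda$ (the $g^*_\lambda$-analogue of Theorems~\ref{thmgfunction}--\ref{thmsfunction}, obtained from weighted vector-valued Calder\'on--Zygmund theory). For a fixed $p>1$, I first arrange $z_0$ additionally to satisfy $\mathcal{G}^*_\lambda(f_1)(z_0)\le 2\cdot\frac{1}{m(\mathcal{B})}\int_\mathcal{B}\mathcal{G}^*_\lambda(f_1)(y)\,dy$ (possible because such $z_0$'s form a set of positive measure in $\mathcal{B}$), and then H\"older's inequality with the $A_p$ property and the Garcia-Cuerva equivalence $\mathrm{BMO}^p(\omega)\simeq\mathrm{BMO}(\omega)$ yield
$$\frac{1}{\omega(\mathcal{B})}\int_\mathcal{B}\bigl[\mathcal{G}^*_\lambda(f_1)(x)+\mathcal{G}^*_\lambda(f_1)(z_0)\bigr]\,dx \le C\|f\|_{\mathrm{BMO}(\omega)},$$
after invoking the $A_1$ property \eqref{A1property} to trade $m(\mathcal{B})$ against $\omega(\mathcal{B})$.

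The heart of the proof is the pointwise bound on $\bigl|\mathcal{G}^*_\lambda(f_2)(x)-\mathcal{G}^*_\lambda(f_2)(z_0)\bigr|$ for $x,z_0\in\mathcal{B}$. Using $|a-b|^2\le|a^2-b^2|$ for $a,b\ge 0$ on the definition and the mean value theorem on the kernel,
$$\bigl|\mathcal{G}^*_\lambda(f_2)(x)-\mathcal{G}^*_\lambda(f_2)(z_0)\bigr|^2 \le C|x-z_0|\iint_{\mathbb{R}^{n+1}_+}\frac{t^{\lambda n}}{(t+|x_0-y|)^{\lambda n+1}}\bigl|\psi_t*f_2(y)\bigr|^2\,\frac{dy\,dt}{t^{n+1}}.$$
I decompose $y\in\mathbb{R}^n$ into the annular shells $\mathcal{B}_k:=2^{k+1}\mathcal{B}^*\setminus 2^k\mathcal{B}^*$, bound $|\psi_t*f_2(y)|$ on each shell via \eqref{psi2} and \eqref{psi3} combined with the weighted oscillation estimate $\frac{1}{m(2^{k+1}\mathcal{B}^*)}\int_{2^{k+1}\mathcal{B}^*}|f-f_{\mathcal{B}^*}|\,dy \le C(k+1)\,\frac{\omega(2^{k+1}\mathcal{B}^*)}{m(2^{k+1}\mathcal{B}^*)}\|f\|_{\mathrm{BMO}(\omega)}$, split the $t$-integration at $t\approx 2^kr_0$, and sum. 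The threshold $(\lambda-3)n>2\delta+2\gamma$ is precisely what makes both the $t$-integrals converge uniformly in $k$ \emph{and} the resulting $k$-series decay geometrically, once \eqref{A1property} absorbs each factor $\omega(\mathcal{B}_k)/m(\mathcal{B}_k)$ into $[\omega]_{A_1}\,\mathrm{ess\,inf}_{\mathcal{B}_k}\omega$. The main obstacle is this bookkeeping step; once complete, averaging against $d\omega/\omega(\mathcal{B})$ over $\mathcal{B}$ together with $|x-z_0|\le 4r_0$ delivers the $\mathrm{BLO}(\omega)$-bound, while the same difference estimate applied between a point of finiteness and an arbitrary other point supplies the finiteness dichotomy.
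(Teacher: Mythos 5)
Your overall architecture (local/far splitting of $f$, weighted $L^2(\omega^{-1})$-boundedness of $\mathcal{G}^*_\lambda$ for the local piece, a pointwise oscillation estimate for the far piece) is sound, and it is a genuinely different route from the paper's: the paper first splits the operator in $t$ at the scale $r$, bounds the small-$t$ part $\mathcal{G}^*_{\lambda,0}$ directly (no oscillation) via the weighted $L^2$ bound plus the size condition and the integral claim \eqref{hard1}, and forms the oscillation only for $t>r$, where the substitution $y=x+z$ moves the $x$-dependence into $\psi_t*f(x+z)$ so that the smoothness condition \eqref{psi3} on $\psi$ does the work (claim \eqref{hard2}). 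You instead keep the ambient variable fixed and exploit the smoothness of the explicit weight $(t/(t+|x-y|))^{\lambda n}$, which in principle lets you avoid \eqref{psi3} for the far term entirely.

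However, your central displayed inequality is false as stated, and this is a genuine gap. The mean value theorem applied to $s\mapsto (t/(t+s))^{\lambda n}$ gives a bound with $(t+\xi)^{-\lambda n-1}$ where $\xi$ lies between $|x-y|$ and $|z_0-y|$; when $y$ is close to $x$ or $z_0$ (hence $|x_0-y|\lesssim r_0$) and $t\ll r_0$, $\xi$ can be near $0$ while $|x_0-y|\approx r_0$, so you cannot replace $(t+\xi)$ by $(t+|x_0-y|)$ --- the claimed kernel bound fails by a factor of order $(r_0/t)^{\lambda n+1}$. Your subsequent bookkeeping compounds this: the shells $\mathcal{B}_k=2^{k+1}\mathcal{B}^*\setminus 2^k\mathcal{B}^*$ cover only $\mathbb{R}^n\setminus\mathcal{B}^*$, yet the ambient $y$-integral in $\mathcal{G}^*_\lambda(f_2)$ runs over all of $\mathbb{R}^n$ and $\psi_t*f_2(y)\neq0$ for $y\in\mathcal{B}^*$, so the most delicate region is simply omitted. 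The region $\{y\in\mathcal{B}^*,\,t<r_0\}$ can in fact be controlled, but by a different argument: bound the kernel difference crudely by the sum of the two kernels, use $\int_{\mathbb{R}^n}(t/(t+|x-y|))^{\lambda n}\,dy\approx t^n$ (valid since $\lambda>1$) to cancel the $t^{-n}$ from the measure, and use the decay $|\psi_t*f_2(y)|\lesssim (t/r_0)^\delta\,\|f\|_{\mathrm{BMO}(\omega)}\,\mathrm{ess\,inf}_{\mathcal{B}}\,\omega$ coming from the size condition and the distance from $y$ to $\mathrm{supp}\,f_2$; this yields a convergent $\int_0^{r_0}t^{2\delta-1}\,dt$. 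Without this supplementary step (or the paper's device of splitting at $t=r$ before forming any oscillation), your proof does not close. You should also note that the weighted $L^2$ bound for $\mathcal{G}^*_\lambda$ with an $A_2$ weight is not merely ``standard'' here --- the paper has to prove it (Theorem \ref{thmlafunction}) by comparison with dilated area integrals, since it is applied to $\omega^{-1}\in A_2$.
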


By remark \ref{rem11}, we may take $\psi(x)$ to be
\begin{equation*}
\psi(x)=\frac{\partial}{\partial t}P_t(x)\Big|_{t=1},
\end{equation*}
and then the functions $\mathcal{G}({f})(x)$, $\mathcal{S}({f})(x)$ and $\mathcal{G}^{\ast}_{\lambda}({f})(x)$ defined above become the classical Littlewood--Paley functions. Hence, the conclusions of the above theorems also hold for the classical Littlewood--Paley operators.

Throughout this paper, the letter $C$ will stand for a positive constant, not necessarily the same at each occurrence but independent of the essential variables. By $\mathbf{X}\lesssim\mathbf{Y}$, we mean that there exists a positive constant $C>0$ such that $\mathbf{X}\leq C\mathbf{Y}$. If $\mathbf{X}\lesssim\mathbf{Y}$ and $\mathbf{Y}\lesssim\mathbf{X}$, then we write $\mathbf{X}\approx\mathbf{Y}$ and say that $\mathbf{X}$ and $\mathbf{Y}$ are equivalent.

\section{Some key lemmas}
Let us begin by recalling some well-known results about $A_1$ weights (see, for example, \cite{duoand} and \cite{grafakos2}).
\begin{lem}\label{lemmaA1}
Let $n\in\mathbb N$. Then we have:
\begin{enumerate}
  \item[$(1)$] for $\omega\in A_1$ and for any ball $\mathcal{B}$ in $\mathbb R^n$,
\begin{equation*}
\omega(2\mathcal{B})\leq 2^{n}[\omega]_{A_1}\cdot\omega(\mathcal{B}),
\end{equation*}
that is, $\omega(x)\,dx$ is a doubling measure($\omega$ satisfies the doubling property).
  \item[$(2)$] Let $\omega\in A_2$. In general, for any $t>0$ and for any ball $\mathcal{B}$ in $\mathbb R^n$,
\begin{equation*}
\omega(t\mathcal{B})\leq t^{2n}[\omega]_{A_2}\cdot\omega(\mathcal{B}).
\end{equation*}
\end{enumerate}
\end{lem}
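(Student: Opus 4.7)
Both parts of the lemma are direct consequences of the Muckenhoupt $A_p$ definitions; the plan is to set up the correct monotone inequalities and then apply the definition on a suitably chosen ball.

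For part (1), I would apply the $A_1$ condition directly to the dilated ball $2\mathcal{B}$:
\[
\omega(2\mathcal{B})\leq m(2\mathcal{B})\cdot[\omega]_{A_1}\underset{x\in 2\mathcal{B}}{\mathrm{ess\,inf}}\,\omega(x)=2^n m(\mathcal{B})\cdot[\omega]_{A_1}\underset{x\in 2\mathcal{B}}{\mathrm{ess\,inf}}\,\omega(x).
\]
Since the essential infimum over the larger set $2\mathcal{B}$ is bounded above by the essential infimum over $\mathcal{B}$, and the latter is trivially bounded above by $\omega(\mathcal{B})/m(\mathcal{B})$, the desired inequality $\omega(2\mathcal{B})\leq 2^n[\omega]_{A_1}\omega(\mathcal{B})$ follows at once.

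For part (2), the plan is to combine the Cauchy--Schwarz inequality with the $A_2$ condition on the dilated ball $t\mathcal{B}$; it suffices to treat $t\geq 1$, since for $t<1$ the containment $t\mathcal{B}\subset\mathcal{B}$ together with $[\omega]_{A_2}\geq 1$ makes the claim immediate. Writing $\mathbf{1}_{\mathcal{B}}=\omega^{1/2}\cdot\omega^{-1/2}$ and applying Cauchy--Schwarz gives
\[
m(\mathcal{B})^2\leq\omega(\mathcal{B})\cdot\omega^{-1}(\mathcal{B}),
\]
while the $A_2$ condition applied to $t\mathcal{B}$ reads
\[
\omega(t\mathcal{B})\cdot\omega^{-1}(t\mathcal{B})\leq[\omega]_{A_2}\cdot m(t\mathcal{B})^2=t^{2n}[\omega]_{A_2}\cdot m(\mathcal{B})^2.
\]
Using the monotonicity $\omega^{-1}(\mathcal{B})\leq\omega^{-1}(t\mathcal{B})$ and chaining these inequalities produces
\[
\omega(t\mathcal{B})\,\omega^{-1}(\mathcal{B})\leq\omega(t\mathcal{B})\,\omega^{-1}(t\mathcal{B})\leq t^{2n}[\omega]_{A_2}m(\mathcal{B})^2\leq t^{2n}[\omega]_{A_2}\,\omega(\mathcal{B})\,\omega^{-1}(\mathcal{B}),
\]
after which cancelling the positive factor $\omega^{-1}(\mathcal{B})$ yields $\omega(t\mathcal{B})\leq t^{2n}[\omega]_{A_2}\omega(\mathcal{B})$.

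There is no genuine obstacle in either part, since both assertions are short, standard consequences of the $A_p$ definitions. The only subtlety worth flagging is that the $A_p$ inequality must be applied on the \emph{larger} ball, with the passage to the smaller one carried out through the monotonicity of the set functions $\omega(\cdot)$ and $\omega^{-1}(\cdot)$; the dilation factors $2^n$ in (1) and $t^{2n}$ in (2) arise purely from the ratios of Lebesgue measures $m(2\mathcal{B})/m(\mathcal{B})$ and $m(t\mathcal{B})/m(\mathcal{B})$.
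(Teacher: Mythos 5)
Your argument for part (1) is correct and is the standard one: apply the $A_1$ inequality on the dilated ball $2\mathcal{B}$ and pass back to $\mathcal{B}$ via the monotonicity of the essential infimum together with the trivial bound $\underset{x\in\mathcal{B}}{\mathrm{ess\,inf}}\,\omega(x)\leq\omega(\mathcal{B})/m(\mathcal{B})$. The paper gives no proof of this lemma at all (it is recalled from \cite{duoand} and \cite{grafakos2}), so there is nothing to compare against; your main computation in part (2) --- chaining $m(\mathcal{B})^2\leq\omega(\mathcal{B})\,\omega^{-1}(\mathcal{B})$ from Cauchy--Schwarz with the $A_2$ inequality on $t\mathcal{B}$ and the monotonicity $\omega^{-1}(\mathcal{B})\leq\omega^{-1}(t\mathcal{B})$, then cancelling the finite positive factor $\omega^{-1}(\mathcal{B})$ --- is also correct, and is the standard route, valid precisely when $t\geq1$.

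The genuine gap is your dispatch of the case $t<1$. From $t\mathcal{B}\subset\mathcal{B}$ you only get $\omega(t\mathcal{B})\leq\omega(\mathcal{B})$, whereas the target bound $t^{2n}[\omega]_{A_2}\,\omega(\mathcal{B})$ carries the factor $t^{2n}<1$, which $[\omega]_{A_2}\geq1$ does not compensate when $t$ is small. In fact the inequality as stated is simply false for $t<1$: take $\omega\equiv1$, so that $[\omega]_{A_2}=1$ and $\omega(t\mathcal{B})=t^{n}m(\mathcal{B})>t^{2n}m(\mathcal{B})$. So no argument can close that case; the hypothesis should read $t\geq1$. This is a slip in the paper's formulation rather than a defect of your main argument --- the lemma is only ever invoked with $t=2^{\ell}$, $\ell\geq1$, in the proof of Theorem \ref{thmlafunction} --- but your claim that the small-$t$ case is ``immediate'' is a genuine error and should be replaced by the restriction $t\geq1$. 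Everything else in the proposal is sound.
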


We also need the following two auxiliary lemmas about $\mathrm{BMO}(\omega)$ functions.

\begin{lem}\label{BMOp}
Let $f\in \mathrm{BMO}(\omega)$ and $\omega\in A_1$. Then for any $1\leq p<\infty$ and for any ball $\mathcal{B}$ in $\mathbb R^n$, we get
\begin{equation*}
\bigg(\frac{1}{\omega(\mathcal{B})}\int_{\mathcal{B}}\big|f(x)-f_{\mathcal{B}}\big|^p\omega(x)^{1-p}dx\bigg)^{1/p}
\leq C\big\|f\big\|_{\mathrm{BMO}(\omega)}.
\end{equation*}
\end{lem}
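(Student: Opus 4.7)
The case $p = 1$ is immediate: the claimed inequality is precisely the definition of $\|f\|_{\mathrm{BMO}(\omega)}$ (with $C = 1$). So I focus on $p > 1$. The plan is to reduce the weighted $L^p$ mean-oscillation on $\mathcal{B}$ to an unweighted one via the $A_1$ pointwise lower bound on $\omega$, and then establish the resulting unweighted estimate via a John--Nirenberg-type exponential decay obtained by Calder\'on--Zygmund iteration.

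By the $A_1$ property \eqref{A1property}, for a.e.\ $x \in \mathcal{B}$ we have $\omega(x) \ge [\omega]_{A_1}^{-1}\, \omega(\mathcal{B})/m(\mathcal{B})$. Since $1 - p < 0$, this yields the pointwise bound
$$\omega(x)^{1-p} \le [\omega]_{A_1}^{p-1} \left(\frac{\omega(\mathcal{B})}{m(\mathcal{B})}\right)^{1-p}$$
for a.e.\ $x \in \mathcal{B}$. Substituting this into the integrand of the left-hand side and collecting powers of $\omega(\mathcal{B})$ and $m(\mathcal{B})$, it suffices to prove the unweighted estimate
$$\left(\frac{1}{m(\mathcal{B})} \int_\mathcal{B} |f(x) - f_\mathcal{B}|^p\,dx\right)^{1/p} \le C\, \|f\|_{\mathrm{BMO}(\omega)} \cdot \frac{\omega(\mathcal{B})}{m(\mathcal{B})},$$
where $C$ may depend on $n$, $p$, and $[\omega]_{A_1}$.

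To verify this, I would establish the distributional exponential-decay inequality
$$m\!\left(\{x \in \mathcal{B}: |f(x)-f_\mathcal{B}| > \lambda\}\right) \le c_1\, m(\mathcal{B}) \exp\!\left(-\frac{c_2\, \lambda\, m(\mathcal{B})}{\alpha\, \omega(\mathcal{B})}\right), \qquad \lambda > 0,$$
where $\alpha := \|f\|_{\mathrm{BMO}(\omega)}$. This would be proved by a Calder\'on--Zygmund stopping-time argument, starting from the base inequality $\frac{1}{m(\mathcal{B})} \int_\mathcal{B} |f - f_\mathcal{B}|\,dx \le \alpha\, \omega(\mathcal{B})/m(\mathcal{B})$: one stops at level $K\alpha\, \omega(\mathcal{B})/m(\mathcal{B})$ for a suitably large $K$, produces disjoint stopping cubes of total Lebesgue measure at most $m(\mathcal{B})/K$, and iterates on each stopping cube. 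The $A_1$ doubling of $\omega\,dx$ (Lemma \ref{lemmaA1}(1)) together with the sub-cube hypothesis $\frac{1}{m(\mathcal{Q})} \int_\mathcal{Q} |f - f_\mathcal{Q}|\,dx \le \alpha\, \omega(\mathcal{Q})/m(\mathcal{Q})$ is used to pass from one generation to the next. Integrating the distributional estimate against $p\lambda^{p-1}\,d\lambda$ by layer-cake then yields the reduced bound above.

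The main obstacle is the Calder\'on--Zygmund iteration. In classical BMO, each sub-cube inherits the same uniform mean-oscillation bound, driving the geometric decay. Here the sub-cube bound $\alpha\, \omega(\mathcal{Q})/m(\mathcal{Q})$ is not uniform, and the ratio $\omega(\mathcal{Q})/m(\mathcal{Q})$ can grow as $\omega$ concentrates on descendants. Taming this drift---so that the resulting exponential-decay rate remains expressible in terms of only the top-level ratio $\omega(\mathcal{B})/m(\mathcal{B})$---requires careful bookkeeping combined with the $A_1$ reverse-average bound $\omega(\mathcal{Q})/m(\mathcal{Q}) \le [\omega]_{A_1}\, \mathrm{ess\,inf}_{\mathcal{Q}}\, \omega$, which is the technical heart of the argument.
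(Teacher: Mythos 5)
Your case $p=1$ and the pointwise bound $\omega(x)\ge[\omega]_{A_1}^{-1}\,\omega(\mathcal{B})/m(\mathcal{B})$ for a.e.\ $x\in\mathcal{B}$ are both correct, but the reduction you build on them fails: the unweighted estimate you reduce to,
\begin{equation*}
\Big(\frac{1}{m(\mathcal{B})}\int_{\mathcal{B}}|f(x)-f_{\mathcal{B}}|^p\,dx\Big)^{1/p}\le C\,\|f\|_{\mathrm{BMO}(\omega)}\,\frac{\omega(\mathcal{B})}{m(\mathcal{B})},
\end{equation*}
is false, and so is the purely Lebesgue distributional inequality from which you propose to derive it. Take $\omega(x)=|x|^{-a}$ with $0<a<n$ (an $A_1$ weight) and $f=\omega$; then $\frac{1}{\omega(\mathcal{B})}\int_{\mathcal{B}}|f-f_{\mathcal{B}}|\,dx\le 2$ for every ball, so $\|f\|_{\mathrm{BMO}(\omega)}\le 2$, yet on $\mathcal{B}=B(0,1)$ one has $\int_{\mathcal{B}}|f-f_{\mathcal{B}}|^p\,dx=+\infty$ as soon as $ap\ge n$ (possible for every $p>1$), while your right-hand side is finite. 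Likewise $m(\{x\in\mathcal{B}:|f(x)-f_{\mathcal{B}}|>\lambda\})\approx\lambda^{-n/a}$ decays only polynomially, so no exponential bound whose rate depends only on $\alpha$, $m(\mathcal{B})$ and $\omega(\mathcal{B})$ can hold. The root of the problem is that the factor $\omega(x)^{1-p}$ in the statement is not a nuisance to be majorized away: a $\mathrm{BMO}(\omega)$ function is allowed to oscillate like $\omega$ locally, and $\omega(x)^{1-p}$ is exactly what compensates for that oscillation where $\omega$ is large. The $A_1$ condition gives a lower bound for $\omega$ on $\mathcal{B}$ comparable to its average, but no upper bound, and on sub-cubes near a singularity of $\omega$ the ratio $\omega(Q)/m(Q)$ is unbounded relative to $\omega(\mathcal{B})/m(\mathcal{B})$. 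This is also why the fix you propose for the Calder\'on--Zygmund iteration, $\omega(Q)/m(Q)\le[\omega]_{A_1}\mathrm{ess\,inf}_{Q}\omega$, does not close the induction: $\mathrm{ess\,inf}_{Q}\omega$ only increases as $Q$ shrinks, so it cannot be controlled by the top-level ratio.

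The paper's argument (given in its last section, via inequality \eqref{jlbmo} and Theorem \ref{lastthm2}) keeps the weight in play to the end: it proves a John--Nirenberg estimate for the Lebesgue measure of the level sets of $f-f_{\mathcal{Q}}$, transfers that estimate to the measure $\nu=\omega^{1-p}\in A_p$ by the relative-measure comparison of Lemma \ref{comparelem}, and then applies the layer-cake formula of Lemma \ref{lem71} \emph{with respect to $\nu$}, not with respect to $dx$. Since $\nu$ is small precisely where $f$ oscillates most, this is what renders the $p$-th power integrable. To repair your argument you would have to either follow that scheme or prove a Muckenhoupt--Wheeden-type John--Nirenberg inequality in which the level itself is weighted pointwise, i.e.\ for the sets $\{x\in\mathcal{B}:|f(x)-f_{\mathcal{B}}|>\lambda\,\omega(x)\}$; the reduction to an unweighted oscillation estimate cannot work.
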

As mentioned before, this estimate was obtained by Garc\'{i}a-Cuerva in \cite{garcia1}. We will give a proof of Garc\'{i}a-Cuerva's result in the last section. When $\omega(x)\equiv1$, see \cite{duoand} and \cite{grafakos}.
\begin{lem}\label{wanglemma1}
Let $f\in \mathrm{BMO}(\omega)$ and $\omega\in A_1$. Then the following properties hold.
\begin{enumerate}
  \item[$(1)$] for any ball $\mathcal{B}$ in $\mathbb R^n$,
  \begin{equation*}
  \frac{1}{m(\mathcal{B})}\int_{\mathcal{B}}\big|f(x)-f_{\mathcal{B}}\big|\,dx
    \leq\Big[[\omega]_{A_1}\underset{x\in \mathcal{B}}{\mathrm{ess\,inf}}\,\omega(x)\Big]\cdot\big\|f\big\|_{\mathrm{BMO}(\omega)}.
  \end{equation*}
  \item[$(2)$] For every $k\in \mathbb{N}$,  we get
\begin{equation*}
\frac{1}{m(2^k\mathcal{B})}\int_{2^k\mathcal{B}}\big|f(x)-f_{\mathcal{B}}\big|\,dx
\leq Ck\Big[[\omega]_{A_1}\underset{x\in \mathcal{B}}{\mathrm{ess\,inf}}\,\omega(x)\Big]\cdot\big\|f\big\|_{\mathrm{BMO}(\omega)}.
\end{equation*}
Here the constant $C$ is independent of $k$ and $f$.
\end{enumerate}
\end{lem}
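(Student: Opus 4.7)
The plan is to handle part $(1)$ by a direct rescaling and then build part $(2)$ on top of it through the standard telescoping argument, paying attention to how the essential infimum behaves under ball dilation.

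For $(1)$, I would start from the very definition of $\|f\|_{\mathrm{BMO}(\omega)}$, which immediately gives
\[
\frac{1}{\omega(\mathcal{B})}\int_{\mathcal{B}}|f(x)-f_{\mathcal{B}}|\,dx\leq \|f\|_{\mathrm{BMO}(\omega)}.
\]
Multiplying and dividing by $m(\mathcal{B})$, the claim reduces to estimating the ratio $\omega(\mathcal{B})/m(\mathcal{B})$. This is exactly what the $A_1$-condition provides via \eqref{A1property}: $\omega(\mathcal{B})/m(\mathcal{B})\leq [\omega]_{A_1}\,\mathrm{ess\,inf}_{x\in\mathcal{B}}\omega(x)$. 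Combining the two inequalities yields $(1)$ with constant exactly $1$.

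For $(2)$, I would use the triangle inequality together with a dyadic telescoping on the average:
\[
\frac{1}{m(2^k\mathcal{B})}\int_{2^k\mathcal{B}}|f(x)-f_{\mathcal{B}}|\,dx
\leq \frac{1}{m(2^k\mathcal{B})}\int_{2^k\mathcal{B}}|f(x)-f_{2^k\mathcal{B}}|\,dx+|f_{2^k\mathcal{B}}-f_{\mathcal{B}}|.
\]
The first summand is controlled directly by part $(1)$ applied to $2^k\mathcal{B}$. For the second, I write
\[
|f_{2^k\mathcal{B}}-f_{\mathcal{B}}|\leq \sum_{j=0}^{k-1}|f_{2^{j+1}\mathcal{B}}-f_{2^{j}\mathcal{B}}|,
\]
and bound each increment by enlarging the domain of integration, namely
\[
|f_{2^{j+1}\mathcal{B}}-f_{2^{j}\mathcal{B}}|\leq \frac{2^n}{m(2^{j+1}\mathcal{B})}\int_{2^{j+1}\mathcal{B}}|f(x)-f_{2^{j+1}\mathcal{B}}|\,dx,
\]
and then applying part $(1)$ to each dilate $2^{j+1}\mathcal{B}$.

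This produces bounds of the form $2^n[\omega]_{A_1}\,\mathrm{ess\,inf}_{x\in 2^{j+1}\mathcal{B}}\,\omega(x)\cdot\|f\|_{\mathrm{BMO}(\omega)}$ for each term, and similarly for the first summand on $2^k\mathcal{B}$. The key — and the only step where one has to be careful — is to replace $\mathrm{ess\,inf}_{x\in 2^{j+1}\mathcal{B}}\,\omega(x)$ by $\mathrm{ess\,inf}_{x\in \mathcal{B}}\,\omega(x)$. This is allowed because $\mathcal{B}\subset 2^{j+1}\mathcal{B}$ forces $\mathrm{ess\,inf}$ over the larger ball to be no greater than that over the smaller one; this monotonicity, which runs in the favorable direction precisely because $\omega\in A_1$ lets us dominate the average by an essential infimum, is really the heart of the argument. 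Summing the $k$ identical upper bounds gives the factor of $k$, and combining with the first summand yields the desired inequality with some dimensional constant $C$. I do not anticipate any serious technical obstacle; the only delicate point is the direction of the essential-infimum monotonicity just described.
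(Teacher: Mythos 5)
Your proposal is correct and follows essentially the same route as the paper: part (1) by rewriting the unweighted average as $\frac{\omega(\mathcal{B})}{m(\mathcal{B})}$ times the weighted oscillation and invoking the $A_1$ inequality \eqref{A1property}, and part (2) by the standard telescoping of $|f_{2^k\mathcal{B}}-f_{\mathcal{B}}|$ through dyadic dilates, using that $\mathrm{ess\,inf}_{2^{j}\mathcal{B}}\,\omega\leq\mathrm{ess\,inf}_{\mathcal{B}}\,\omega$ since $\mathcal{B}\subset 2^{j}\mathcal{B}$. The only quibble is a matter of phrasing: the monotonicity of the essential infimum under enlargement of the ball is a purely set-theoretic fact and does not depend on $\omega\in A_1$; the $A_1$ hypothesis is what produces the essential infimum in the bound in the first place.
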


\begin{proof}
(1). Since $\omega\in A_1$, by \eqref{A1property} and the definition of $\mathrm{BMO}(\omega)$, we have
\begin{equation*}
\begin{split}
\frac{1}{m(\mathcal{B})}\int_{\mathcal{B}}\big|f(x)-f_{\mathcal{B}}\big|\,dx
&=\frac{m(\mathcal{B})}{\omega(\mathcal{B})}\cdot\frac{1}{\omega(\mathcal{B})}\int_{\mathcal{B}}\big|f(x)-f_{\mathcal{B}}\big|\,dx\\
&\leq\Big[[\omega]_{A_1}\underset{x\in \mathcal{B}}{\mathrm{ess\,inf}}\,\omega(x)\Big]\cdot\big\|f\big\|_{\mathrm{BMO}(\omega)},
\end{split}
\end{equation*}
as desired.

(2). Observe that for any ball $\mathcal{B}$ in $\mathbb R^n$,
\begin{equation*}
\begin{split}
\big|f_{2\mathcal{B}}-f_{\mathcal{B}}\big|
&\leq\frac{1}{m(\mathcal{B})}\int_{\mathcal{B}}\big|f(x)-f_{2\mathcal{B}}\big|\,dx\\
&\leq\frac{2^n}{m(2\mathcal{B})}\int_{2\mathcal{B}}\big|f(x)-f_{2\mathcal{B}}\big|\,dx
\leq 2^n\Big[[\omega]_{A_1}\underset{x\in 2\mathcal{B}}{\mathrm{ess\,inf}}\,\omega(x)\Big]\cdot\big\|f\big\|_{\mathrm{BMO}(\omega)},
\end{split}
\end{equation*}
by part (1). Similarly, for each $1\leq i\leq k$,
\begin{equation*}
\big|f_{2^i\mathcal{B}}-f_{2^{i-1}\mathcal{B}}\big|
\leq 2^n\Big[[\omega]_{A_1}\underset{x\in 2^i\mathcal{B}}{\mathrm{ess\,inf}}\,\omega(x)\Big]\cdot\big\|f\big\|_{\mathrm{BMO}(\omega)}.
\end{equation*}
Hence,
\begin{equation*}
\begin{split}
&\frac{1}{m(2^k\mathcal{B})}\int_{2^k\mathcal{B}}\big|f(x)-f_{\mathcal{B}}\big|\,dx
\leq\frac{1}{m(2^k\mathcal{B})}\int_{2^k\mathcal{B}}\big|f(x)-f_{2^k\mathcal{B}}\big|\,dx
+\big|f_{2^k\mathcal{B}}-f_{\mathcal{B}}\big|\\
&\leq\Big[[\omega]_{A_1}\underset{x\in2^k\mathcal{B}}{\mathrm{ess\,inf}}\,\omega(x)\Big]\cdot\big\|f\big\|_{\mathrm{BMO}(\omega)}
+\sum_{i=1}^k\big|f_{2^i\mathcal{B}}-f_{2^{i-1}\mathcal{B}}\big|\\
&\leq\Big[[\omega]_{A_1}\underset{x\in2^k\mathcal{B}}{\mathrm{ess\,inf}}\,\omega(x)\Big]\cdot\big\|f\big\|_{\mathrm{BMO}(\omega)}
+2^n\sum_{i=1}^k\Big[[\omega]_{A_1}\underset{x\in 2^i\mathcal{B}}{\mathrm{ess\,inf}}\,\omega(x)\Big]\cdot\big\|f\big\|_{\mathrm{BMO}(\omega)}\\
&\leq Ck\Big[[\omega]_{A_1}\underset{x\in \mathcal{B}}{\mathrm{ess\,inf}}\,\omega(x)\Big]\cdot\big\|f\big\|_{\mathrm{BMO}(\omega)}.
\end{split}
\end{equation*}
We are done.
\end{proof}

\section{Proofs of Theorems \ref{mainthm1} and \ref{mainthm2}}
Let $\mathcal{F}$ be a real-valued nonnegative function and measurable on $\mathbb R^n$. For each fixed ball $\mathcal{B}\subset\mathbb R^n$, we need the following estimate about the relationship between essential supremum and essential infimum.
\begin{equation}\label{essinf}
\mathcal{F}(x)-\underset{y\in\mathcal{B}}{\mathrm{ess\,inf}}\,\mathcal{F}(y)
\leq \underset{y\in\mathcal{B}}{\mathrm{ess\,sup}}\big|\mathcal{F}(x)-\mathcal{F}(y)\big|.
\end{equation}
We also need the following result. Let $\omega$ be a weight function on $\mathbb R^n$. Since for each ball $\mathcal{B}\subset\mathbb R^n$,
\begin{equation*}
\omega(\mathcal{B})=\int_{\mathcal{B}}\omega(x)\,dx\geq \underset{x\in\mathcal{B}}{\mathrm{ess\,inf}}\,\omega(x)\cdot m(B),
\end{equation*}
then we have
\begin{equation}\label{omegawh}
\frac{m(\mathcal{B})}{\omega(\mathcal{B})}\cdot\underset{x\in\mathcal{B}}{\mathrm{ess\,inf}}\,\omega(x)\leq1.
\end{equation}
Moreover, we will use the following simple fact.
\begin{equation}\label{simfact}
\omega\in A_2\Longleftrightarrow \omega^{-1}\in A_2.
\end{equation}
We are now in a position to prove our main theorems. We first remark that the a.e. existence of the classical and generalized Littlewood--Paley operators was already proved in \cite{sun,zhang1,zhang2} (see also \cite[Chapter 4]{xiao}), under the assumption of one point finiteness. For the multilinear case, one can see \cite{he}. By using similar arguments, we can also obtain the corresponding (a.e. existence) results for the weighted case.

\begin{proof}[Proof of Theorem $\ref{mainthm1}$]
Let ${f}\in \mathrm{BMO}(\omega)$ and $\omega\in A_1$. By the definition of $\mathrm{BLO}(\omega)$, it suffices to show that for any given ball $\mathcal{B}=B(x_0,r)\subset\mathbb R^n$ with center $x_0\in \mathbb R^n$ and radius $r\in(0,+\infty)$, the following inequality holds:
\begin{equation}\label{mainesti1}
\frac{1}{\omega(\mathcal{B})}\int_{\mathcal{B}}
\Big[\big[\mathcal{G}({f})(x)\big]-\underset{y\in\mathcal{B}}{\mathrm{ess\,inf}}\,\big[\mathcal{G}({f})(y)\big]\Big]\,dx
\lesssim\big\|f\big\|_{\mathrm{BMO}(\omega)}.
\end{equation}
First of all, we decompose the integral defining $\mathcal{G}({f})$ into two parts.
\begin{equation*}
\begin{split}
&\big[\mathcal{G}({f})(x)\big]=\bigg(\int_0^{\infty}\big|\psi_t*f(x)\big|^2\frac{dt}{t}\bigg)^{1/2}\\
&=\bigg(\int_0^{r}\big|\psi_t*f(x)\big|^2\frac{dt}{t}+\int_{r}^{\infty}\big|\psi_t*f(x)\big|^2\frac{dt}{t}\bigg)^{1/2}\\
&\leq\bigg(\int_0^{r}\big|\psi_t*f(x)\big|^2\frac{dt}{t}\bigg)^{1/2}
+\bigg(\int_{r}^{\infty}\big|\psi_t*f(x)\big|^2\frac{dt}{t}\bigg)^{1/2}
:=\big[\mathcal{G}_0({f})(x)\big]+\big[\mathcal{G}_{\infty}({f})(x)\big].
\end{split}
\end{equation*}
Obviously,
\begin{equation*}
\big[\mathcal{G}_0({f})(x)\big],\big[\mathcal{G}_{\infty}({f})(x)\big]
\leq \big[\mathcal{G}({f})(x)\big]\leq\big[\mathcal{G}_0({f})(x)\big]+\big[\mathcal{G}_{\infty}({f})(x)\big].
\end{equation*}
Consequently, in view of \eqref{essinf}, we can deduce that
\begin{equation*}
\begin{split}
&\frac{1}{\omega(\mathcal{B})}\int_{\mathcal{B}}
\Big[\big[\mathcal{G}({f})(x)\big]-\underset{y\in\mathcal{B}}{\mathrm{ess\,inf}}\,\big[\mathcal{G}({f})(y)\big]\Big]\,dx\\
&\leq\frac{1}{\omega(\mathcal{B})}\int_{\mathcal{B}}\Big[\big[\mathcal{G}_0({f})(x)\big]+\big[\mathcal{G}_{\infty}({f})(x)\big]
-\underset{y\in\mathcal{B}}{\mathrm{ess\,inf}}\,\big[\mathcal{G}({f})(y)\big]\Big]\,dx\\
&\leq\frac{1}{\omega(\mathcal{B})}\int_{\mathcal{B}}\Big[\big[\mathcal{G}_0({f})(x)\big]+\big[\mathcal{G}_{\infty}({f})(x)\big]
-\underset{y\in\mathcal{B}}{\mathrm{ess\,inf}}\,\big[\mathcal{G}_{\infty}({f})(y)\big]\Big]\,dx\\
&\leq\frac{1}{\omega(\mathcal{B})}\int_{\mathcal{B}}\big[\mathcal{G}_0({f})(x)\big]dx
+\frac{1}{\omega(\mathcal{B})}\int_{\mathcal{B}}\underset{y\in\mathcal{B}}{\mathrm{ess\,sup}}
\Big|\big[\mathcal{G}_{\infty}({f})(x)\big]-\big[\mathcal{G}_{\infty}({f})(y)\big]\Big|\,dx\\
&:=I_0+I_{\infty}.
\end{split}
\end{equation*}
Let us first estimate the term $I_0$. To do this, we decompose the function $f(x)$ as follows.
\begin{equation*}
\begin{split}
f(x)&=f_{2\mathcal{B}}+[f(x)-f_{2\mathcal{B}}]\cdot\chi_{2\mathcal{B}}(x)
+[f(x)-f_{2\mathcal{B}}]\cdot\chi_{(2\mathcal{B})^{\complement}}(x)\\
&:=f^1(x)+f^2(x)+f^3(x),
\end{split}
\end{equation*}
where $2\mathcal{B}=B(x_0,2r)$, $(2\mathcal{B})^{\complement}=\mathbb R^n\setminus(2\mathcal{B})$ and $\chi_{E}$ denotes the
characteristic function of the set $E$. By the vanishing condition of the kernel $\mathcal{\psi}$, we can see that for any $x\in\mathbb R^n$,
\begin{equation*}
\psi_t*f_1(x)\equiv0\Longrightarrow \big[\mathcal{G}_0({f_1})(x)\big]\equiv0.
\end{equation*}
Then we can write
\begin{equation*}
\begin{split}
I_0&=\frac{1}{\omega(\mathcal{B})}\int_{\mathcal{B}}\big[\mathcal{G}_0({f})(x)\big]dx\\
&\leq\frac{1}{\omega(\mathcal{B})}\int_{\mathcal{B}}\big[\mathcal{G}_0({f}^2)(x)\big]dx
+\frac{1}{\omega(\mathcal{B})}\int_{\mathcal{B}}\big[\mathcal{G}_0({f}^3)(x)\big]dx\\
&:=I^2_0+I^3_0.
\end{split}
\end{equation*}
Note that $\omega\in A_1\subset A_2$. Applying Theorem \ref{thmgfunction} with $p=2$ and Cauchy--Schwarz's inequality together with \eqref{simfact}, we obtain
\begin{equation*}
\begin{split}
I^2_0&=\frac{1}{\omega(\mathcal{B})}\int_{\mathcal{B}}\big[\mathcal{G}_0({f}^2)(x)\big]
\omega(x)^{-1/2}\cdot \omega(x)^{1/2}dx\\
&\leq\frac{1}{\omega(\mathcal{B})}\bigg(\int_{\mathcal{B}}\big[\mathcal{G}_0({f}^2)(x)\big]^2\omega(x)^{-1}dx\bigg)^{1/2}
\bigg(\int_{\mathcal{B}}\omega(x)\,dx\bigg)^{1/2}\\
&\leq\frac{1}{\omega(\mathcal{B})^{1/2}}\big\|\mathcal{G}({f}^2)\big\|_{L^2(\omega^{-1})}\\
&\leq C\cdot\frac{\omega(2\mathcal{B})^{1/2}}{\omega(\mathcal{B})^{1/2}}
\bigg(\frac{1}{\omega(2\mathcal{B})}\int_{2\mathcal{B}}\big|f(x)-f_{2\mathcal{B}}\big|^2\omega(x)^{-1}dx\bigg)^{1/2}.
\end{split}
\end{equation*}
Moreover, by Lemma \ref{BMOp} with $p=2$ and part (1) of Lemma \ref{lemmaA1}, we have
\begin{equation*}
\begin{split}
I^2_0&\leq C\cdot\frac{\omega(2\mathcal{B})^{1/2}}{\omega(\mathcal{B})^{1/2}}\big\|f\big\|_{\mathrm{BMO}(\omega)}
\lesssim\big\|f\big\|_{\mathrm{BMO}(\omega)}.
\end{split}
\end{equation*}
Note that for any $x\in\mathcal{B}=B(x_0,r)$ and $z\in\mathbb R^n\setminus(2\mathcal{B})$, one has
\begin{equation*}
|x-z|\leq|x-x_0|+|x_0-z|<r+|x_0-z|\leq\frac{3}{\,2\,}|x_0-z|,
\end{equation*}
and
\begin{equation*}
|x_0-z|\leq|x_0-x|+|x-z|<r+|x-z|\leq\frac{1}{\,2\,}|x_0-z|+|x-z|.
\end{equation*}
That is, $|x-z|\approx|x_0-z|$. Thus, by the size condition of the kernel $\psi$, we can see that for any $x\in\mathcal{B}$,
\begin{equation*}
\begin{split}
&\big|\psi_t*f^3(x)\big|
=\bigg|\int_{\mathbb R^n\setminus 2\mathcal{B}}\psi_t(x-z)\big[f(z)-f_{2\mathcal{B}}\big]\,dz\bigg|\\
&\lesssim\int_{\mathbb R^n\setminus 2\mathcal{B}}\frac{t^{\delta}}{(t+|x-z|)^{n+\delta}}
\big|f(z)-f_{2\mathcal{B}}\big|\,dz\\
&\lesssim t^{\delta}\cdot\sum_{j=1}^{\infty}\int_{2^{j+1}\mathcal{B}\setminus2^j \mathcal{B}}
\frac{1}{|x_0-z|^{n+\delta}}\big|f(z)-f_{2\mathcal{B}}\big|\,dz\\
&\lesssim t^{\delta}\cdot\sum_{j=1}^{\infty}\int_{2^{j+1}\mathcal{B}}
\frac{1}{m(2^{j+1}\mathcal{B})^{1+\delta/n}}\big|f(z)-f_{2\mathcal{B}}\big|\,dz.
\end{split}
\end{equation*}
It then follows from part (2) of Lemma \ref{wanglemma1} that
\begin{equation*}
\begin{split}
\big|\psi_t*f^3(x)\big|&\lesssim t^{\delta}\cdot\sum_{j=1}^{\infty}
\bigg\{\frac{1}{m(2^j\mathcal{B})^{\frac{\delta}{n}}}
\Big[j\cdot\big\|f\big\|_{\mathrm{BMO}(\omega)}\underset{z\in 2\mathcal{B}}{\mathrm{ess\,inf}}\,\omega(z)\Big]\bigg\}\\
&\lesssim t^{\delta}\cdot\bigg\{\sum_{j=1}^{\infty}\frac{j}{m(2^j\mathcal{B})^{\frac{\delta}{n}}}\bigg\}
\times\big\|f\big\|_{\mathrm{BMO}(\omega)}\underset{z\in\mathcal{B}}{\mathrm{ess\,inf}}\,\omega(z).
\end{split}
\end{equation*}
Therefore,
\begin{equation*}
\begin{split}
I^3_0&=\frac{1}{\omega(\mathcal{B})}\int_{\mathcal{B}}\big[\mathcal{G}_0({f}^3)(x)\big]dx\\
&\lesssim\frac{1}{\omega(\mathcal{B})}\int_{\mathcal{B}}\bigg(\int_0^{r}t^{2\delta-1}dt\bigg)^{1/2}
\bigg\{\sum_{j=1}^{\infty}\frac{j}{m(2^j\mathcal{B})^{\frac{\delta}{n}}}\bigg\}
\times\big\|f\big\|_{\mathrm{BMO}(\omega)}\underset{x\in\mathcal{B}}{\mathrm{ess\,inf}}\,\omega(x)\\
&\lesssim r^{\delta}\bigg\{\sum_{j=1}^{\infty}\frac{j}{(2^j)^{\delta}r^{\delta}}\bigg\}
\times\big\|f\big\|_{\mathrm{BMO}(\omega)}
\bigg\{\frac{m(\mathcal{B})}{\omega(\mathcal{B})}\cdot\underset{x\in\mathcal{B}}{\mathrm{ess\,inf}}\,\omega(x)\bigg\}\\
&\lesssim\big\|f\big\|_{\mathrm{BMO}(\omega)},
\end{split}
\end{equation*}
where in the last inequality we have used \eqref{omegawh}. On the other hand, by the smoothness condition of the kernel $\psi$, we can see that when $x,y\in \mathcal{B}=B(x_0,r)$ and $z\in \mathbb R^n\setminus(4\mathcal{B})$,
\begin{equation}\label{regularity}
\begin{split}
\Big|\psi_t(x-z)-\psi_t(y-z)\Big|
&=\frac{1}{t^{n}}\cdot\bigg|\psi\Big(\frac{x-z}{t}\Big)-\psi\Big(\frac{y-z}{t}\Big)\bigg|\\
&\lesssim\frac{t^{\delta}\cdot|x-y|^{\gamma}}{(t+|x-z|)^{n+\delta+\gamma}}.
\end{split}
\end{equation}
Consequently, by \eqref{regularity}, the triangle inequality and the vanishing condition of the kernel $\psi$, we obtain that for any $x,y\in\mathcal{B}=B(x_0,r)$,
\begin{equation}\label{firstsecond2}
\begin{split}
&\Big|\big|\psi_t*f(x)\big|-\big|\psi_t*f(y)\big|\Big|\leq\Big|\psi_t*f(x)-\psi_t*f(y)\Big|\\
&=\bigg|\int_{\mathbb R^n}\big[\psi_t(x-z)-\psi_t(y-z)\big]\cdot\big[f(z)-f_{4\mathcal{B}}\big]\,dz\bigg|\\
&\lesssim\int_{4\mathcal{B}}\big[\big|\psi_t(x-z)\big|+\big|\psi_t(y-z)\big|\big]\big|f(z)-f_{4\mathcal{B}}\big|\,dz\\
&+\int_{\mathbb R^n\setminus4\mathcal{B}}\frac{t^{\delta}\cdot|x-y|^{\gamma}}{(t+|x-z|)^{n+\delta+\gamma}}
\big|f(z)-f_{4\mathcal{B}}\big|\,dz.
\end{split}
\end{equation}
By the size condition of the kernel $\psi$, we know that for any $t>0$, $x,z\in \mathbb R^n$,
\begin{equation}\label{A83}
\big|\psi_t(x-z)\big|\lesssim\frac{1}{t^{n}}.
\end{equation}
In view of \eqref{A83}, the first term in \eqref{firstsecond2} is naturally controlled by
\begin{equation*}
\begin{split}
\frac{1}{t^{n}}\cdot\int_{4\mathcal{B}}\big|f(z)-f_{4\mathcal{B}}\big|\,dz.
\end{split}
\end{equation*}
We now proceed to estimate the second term in \eqref{firstsecond2}. Observe that when $t>r$, $x\in\mathcal{B}=B(x_0,r)$ and $z\in\mathbb R^n\setminus(4\mathcal{B})$,
\begin{equation*}
t+|x-z|\leq t+|x-x_0|+|x_0-z|<2t+|x_0-z|\leq 2(t+|x_0-z|),
\end{equation*}
and
\begin{equation*}
t+|x_0-z|\leq t+|x_0-x|+|x-z|<2t+|x-z|\leq 2(t+|x-z|).
\end{equation*}
That is, $t+|x-z|\approx t+|x_0-z|$. Thus, the second term in \eqref{firstsecond2} is bounded by
\begin{equation*}
t^{\delta}\cdot\int_{\mathbb R^n\setminus4\mathcal{B}}\frac{(2r)^{\gamma}}{(t+|x_0-z|)^{n+\delta+\gamma}}
\big|f(z)-f_{4\mathcal{B}}\big|\,dz.
\end{equation*}
Hence, by Minkowski's inequality, we can deduce that for any $x,y\in \mathcal{B}=B(x_0,r)$,
\begin{equation*}
\begin{split}
&\Big|\big[\mathcal{G}_{\infty}({f})(x)\big]-\big[\mathcal{G}_{\infty}({f})(y)\big]\Big|\\
&\leq\bigg(\int_{r}^{\infty}\Big|\big|\psi_t*f(x)\big|-\big|\psi_t*f(y)\big|\Big|^2\frac{dt}{t}\bigg)^{1/2}\\
&\lesssim\bigg(\int_{r}^{\infty}\bigg\{\frac{1}{t^{n}}\cdot\int_{4\mathcal{B}}\big|f(z)-f_{4\mathcal{B}}\big|\,dz\bigg\}^2\frac{dt}{t}\bigg)^{1/2}\\
&+\bigg(\int_{r}^{\infty}\bigg\{t^{\delta}\cdot\int_{\mathbb R^n\setminus4\mathcal{B}}\frac{(2r)^{\gamma}}{(t+|x_0-z|)^{n+\delta+\gamma}}
\big|f(z)-f_{4\mathcal{B}}\big|\,dz\bigg\}^2\frac{dt}{t}\bigg)^{1/2}\\
&\leq \int_{4\mathcal{B}}\big|f(z)-f_{4\mathcal{B}}\big|\,dz
\bigg\{\int_{r}^{\infty}\frac{1}{t^{2n+1}}\,dt\bigg\}^{1/2}\\
&+\int_{\mathbb R^n\setminus4\mathcal{B}}(2r)^{\gamma}\big|f(z)-f_{4\mathcal{B}}\big|\,dz
\bigg\{\int_{r}^{\infty}\frac{t^{2\delta-1}}{(t+|x_0-z|)^{2(n+\delta+\gamma)}}\,dt\bigg\}^{1/2}.
\end{split}
\end{equation*}
For $\delta,\gamma>0$, a direct computation shows that
\begin{equation*}
\bigg\{\int_{r}^{\infty}\frac{1}{t^{2n+1}}\,dt\bigg\}^{1/2}\lesssim\frac{1}{r^n},
\end{equation*}
and
\begin{equation*}
\begin{split}
&\bigg\{\int_{r}^{\infty}\frac{t^{2\delta-1}}{(t+|x_0-z|)^{2(n+\delta+\gamma)}}\,dt\bigg\}^{1/2}\\
&=\bigg\{\int_{r}^{|x_0-z|}\frac{t^{2\delta-1}}{(t+|x_0-z|)^{2(n+\delta+\gamma)}}\,dt
+\int_{|x_0-z|}^{\infty}\frac{t^{2\delta-1}}{(t+|x_0-z|)^{2(n+\delta+\gamma)}}\,dt\bigg\}^{1/2}\\
&\leq\bigg\{\int_{r}^{|x_0-z|}\frac{t^{2\delta-1}}{(|x_0-z|)^{2(n+\delta+\gamma)}}\,dt\bigg\}^{1/2}+
\bigg\{\int_{|x_0-z|}^{\infty}\frac{t^{2\delta-1}}{t^{2(n+\delta+\gamma)}}\,dt\bigg\}^{1/2}\\
&\leq\bigg\{\int_0^{|x_0-z|}\frac{t^{2\delta-1}}{(|x_0-z|)^{2(n+\delta+\gamma)}}\,dt\bigg\}^{1/2}+
\bigg\{\int_{|x_0-z|}^{\infty}\frac{1}{t^{2(n+\gamma)+1}}\,dt\bigg\}^{1/2}\\
&\lesssim\frac{1}{(|x_0-z|)^{n+\gamma}}.
\end{split}
\end{equation*}
From this, it follows that
\begin{equation*}
\begin{split}
&\Big|\big[\mathcal{G}_{\infty}({f})(x)\big]-\big[\mathcal{G}_{\infty}({f})(y)\big]\Big|
\lesssim\frac{1}{m(4\mathcal{B})}\int_{4\mathcal{B}}\big|f(z)-f_{4\mathcal{B}}\big|\,dz\\
&+\sum_{j=2}^{\infty}\int_{2^{j+1}\mathcal{B}\setminus2^j\mathcal{B}}
\frac{(2r)^{\gamma}}{(|x_0-z|)^{n+\gamma}}\big|f(z)-f_{4\mathcal{B}}\big|\,dz\\
&\lesssim\frac{1}{m(4\mathcal{B})}\int_{4\mathcal{B}}\big|f(z)-f_{4\mathcal{B}}\big|\,dz
+\sum_{j=2}^{\infty}\bigg\{\frac{(2r)^{\gamma}}{(2^jr)^{\gamma}}\cdot\frac{1}{m(2^{j+1}\mathcal{B})}
\int_{2^{j+1} \mathcal{B}}\big|f(z)-f_{4\mathcal{B}}\big|\,dz\bigg\}.
\end{split}
\end{equation*}
Furthermore, from part (1) and part (2) of Lemma \ref{wanglemma1}, it then follows that for any $x,y\in \mathcal{B}=B(x_0,r)$,
\begin{equation*}
\begin{split}
&\Big|\big[\mathcal{G}_{\infty}({f})(x)\big]-\big[\mathcal{G}_{\infty}({f})(y)\big]\Big|\\
&\lesssim\big\|f\big\|_{\mathrm{BMO}(\omega)}\underset{z\in 4\mathcal{B}}{\mathrm{ess\,inf}}\,\omega(z)
+\sum_{j=2}^{\infty}\frac{j}{2^{(j-1)\gamma}}
\times\big\|f\big\|_{\mathrm{BMO}(\omega)}\underset{z\in4\mathcal{B}}{\mathrm{ess\,inf}}\,\omega(z)\\
&\lesssim\big\|f\big\|_{\mathrm{BMO}(\omega)}\underset{z\in4\mathcal{B}}{\mathrm{ess\,inf}}\,\omega(z).
\end{split}
\end{equation*}
Therefore,
\begin{equation*}
\begin{split}
I_{\infty}&=\frac{1}{\omega(\mathcal{B})}\int_{\mathcal{B}}\underset{y\in\mathcal{B}}{\mathrm{ess\,sup}}
\Big|\big[\mathcal{G}_{\infty}({f})(x)\big]-\big[\mathcal{G}_{\infty}({f})(y)\big]\Big|\,dx\\
&\lesssim\big\|f\big\|_{\mathrm{BMO}(\omega)}
\bigg\{\frac{m(\mathcal{B})}{\omega(\mathcal{B})}\cdot\underset{z\in4\mathcal{B}}{\mathrm{ess\,inf}}\,\omega(z)\bigg\}\\
&\leq\big\|f\big\|_{\mathrm{BMO}(\omega)}
\bigg\{\frac{m(\mathcal{B})}{\omega(\mathcal{B})}\cdot\underset{z\in\mathcal{B}}{\mathrm{ess\,inf}}\,\omega(z)\bigg\}
\leq\big\|f\big\|_{\mathrm{BMO}(\omega)},
\end{split}
\end{equation*}
where the last inequality follows from \eqref{omegawh}. Combining the above estimates for both terms $I_0$ and $I_{\infty}$ yields the desired result \eqref{mainesti1}. This completes the proof of Theorem \ref{mainthm1}.
\end{proof}

\begin{proof}[Proof of Theorem $\ref{mainthm2}$]
Let ${f}\in \mathrm{BMO}(\omega)$ and $\omega\in A_1$. By the definition of $\mathrm{BLO}(\omega)$, it suffices to prove that for any given ball $\mathcal{B}=B(x_0,r)\subset\mathbb R^n$ centered at $x_0\in \mathbb R^n$ with radius $r\in(0,+\infty)$, the following inequality holds:
\begin{equation}\label{mainesti2}
\frac{1}{\omega(\mathcal{B})}\int_{\mathcal{B}}
\Big[\big[\mathcal{S}({f})(x)\big]-\underset{y\in\mathcal{B}}{\mathrm{ess\,inf}}\,\big[\mathcal{S}({f})(y)\big]\Big]\,dx
\lesssim\big\|f\big\|_{\mathrm{BMO}(\omega)}.
\end{equation}
To prove \eqref{mainesti2}, we first decompose the integral defining $\mathcal{S}({f})$ into two parts.
\begin{equation*}
\begin{split}
\big[\mathcal{S}({f})(x)\big]
&=\bigg(\int_0^{\infty}\int_{|z-x|<t}\big|\psi_t*f(z)\big|^2\frac{dzdt}{t^{n+1}}\bigg)^{1/2}\\
&=\bigg(\int_0^{r}\int_{|z-x|<t}\big|\psi_t*f(z)\big|^2\frac{dzdt}{t^{n+1}}
+\int_{r}^{\infty}\int_{|z-x|<t}\big|\psi_t*f(z)\big|^2\frac{dzdt}{t^{n+1}}\bigg)^{1/2}\\
\end{split}
\end{equation*}
\begin{equation*}
\begin{split}
&\leq\bigg(\int_0^{r}\int_{|z-x|<t}\big|\psi_t*f(z)\big|^2\frac{dzdt}{t^{n+1}}\bigg)^{1/2}
+\bigg(\int_{r}^{\infty}\int_{|z-x|<t}\big|\psi_t*f(z)\big|^2\frac{dzdt}{t^{n+1}}\bigg)^{1/2}\\
&:=\big[\mathcal{S}_0({f})(x)\big]+\big[\mathcal{S}_{\infty}({f})(x)\big].
\end{split}
\end{equation*}
It is clear that
\begin{equation*}
\big[\mathcal{S}_0({f})(x)\big],\big[\mathcal{S}_{\infty}({f})(x)\big]
\leq \big[\mathcal{S}({f})(x)\big]\leq\big[\mathcal{S}_0({f})(x)\big]+\big[\mathcal{S}_{\infty}({f})(x)\big].
\end{equation*}
Consequently, in view of \eqref{essinf}, we can deduce that
\begin{equation*}
\begin{split}
&\frac{1}{\omega(\mathcal{B})}\int_{\mathcal{B}}
\Big[\big[\mathcal{S}({f})(x)\big]-\underset{y\in\mathcal{B}}{\mathrm{ess\,inf}}\,\big[\mathcal{S}({f})(y)\big]\Big]\,dx\\
&\leq\frac{1}{\omega(\mathcal{B})}\int_{\mathcal{B}}
\Big[\big[\mathcal{S}_0({f})(x)\big]+\big[\mathcal{S}_{\infty}({f})(x)\big]
-\underset{y\in\mathcal{B}}{\mathrm{ess\,inf}}\,\big[\mathcal{S}(\vec{f})(y)\big]\Big]\,dx\\
&\leq\frac{1}{\omega(\mathcal{B})}\int_{\mathcal{B}}
\Big[\big[\mathcal{S}_0({f})(x)\big]+\big[\mathcal{S}_{\infty}({f})(x)\big]
-\underset{y\in\mathcal{B}}{\mathrm{ess\,inf}}\,\big[\mathcal{S}_{\infty}({f})(y)\big]\Big]\,dx\\
&\leq\frac{1}{\omega(\mathcal{B})}\int_{\mathcal{B}}\big[\mathcal{S}_0({f})(x)\big]dx
+\frac{1}{\omega(\mathcal{B})}\int_{\mathcal{B}}
\underset{y\in\mathcal{B}}{\mathrm{ess\,sup}}\Big|\big[\mathcal{S}_{\infty}({f})(x)\big]-\big[\mathcal{S}_{\infty}({f})(y)\big]\Big|\,dx\\
&:=J_0+J_{\infty}.
\end{split}
\end{equation*}
As usual, we decompose the function $f(x)$ in the following way
\begin{equation*}
\begin{split}
f(x)&=f_{4\mathcal{B}}+[f(x)-f_{4\mathcal{B}}]\cdot\chi_{4\mathcal{B}}(x)
+[f(x)-f_{4\mathcal{B}}]\cdot\chi_{(4\mathcal{B})^{\complement}}(x)\\
&:=f^1(x)+f^2(x)+f^3(x),
\end{split}
\end{equation*}
where $4\mathcal{B}=B(x_0,4r)$ and $(4\mathcal{B})^{\complement}=\mathbb R^n\setminus(4\mathcal{B})$. By the vanishing condition of the kernel ${\psi}$, we can see that
\begin{equation*}
\psi_t*f(x)=\psi_t*f^2(x)+\psi_t*f^3(x)\quad \& \quad \big[\mathcal{S}_0({f}^1)(x)\big]\equiv0.
\end{equation*}
Then we can write
\begin{equation*}
\begin{split}
J_0&=\frac{1}{\omega(\mathcal{B})}\int_{\mathcal{B}}\big[\mathcal{S}_0({f})(x)\big]dx\\
&\leq\frac{1}{\omega(\mathcal{B})}\int_{\mathcal{B}}\big[\mathcal{S}_0({f}^2)(x)\big]dx
+\frac{1}{\omega(\mathcal{B})}\int_{\mathcal{B}}\big[\mathcal{S}_0({f}^3)(x)\big]dx\\
&:=J^2_0+J^3_0.
\end{split}
\end{equation*}
Notice that $\omega\in A_1\subset A_2$. Applying Theorem \ref{thmsfunction} with $p=2$, Cauchy--Schwarz's inequality and \eqref{simfact}, we obtain
\begin{equation*}
\begin{split}
J^2_0&=\frac{1}{\omega(\mathcal{B})}\int_{\mathcal{B}}\big[\mathcal{S}_0({f}^2)(x)\big]
\omega(x)^{-1/2}\cdot \omega(x)^{1/2}dx\\
&\leq\frac{1}{\omega(\mathcal{B})}\bigg(\int_{\mathcal{B}}\big[\mathcal{S}_0({f}^2)(x)\big]^2\omega(x)^{-1}dx\bigg)^{1/2}
\bigg(\int_{\mathcal{B}}\omega(x)\,dx\bigg)^{1/2}\\
&\leq\frac{1}{\omega(\mathcal{B})^{1/2}}\big\|\mathcal{S}({f}^2)\big\|_{L^2(\omega^{-1})}\\
&\leq C\cdot\frac{\omega(4\mathcal{B})^{1/2}}{\omega(\mathcal{B})^{1/2}}
\bigg(\frac{1}{\omega(4\mathcal{B})}\int_{4\mathcal{B}}\big|f(x)-f_{4\mathcal{B}}\big|^2\omega(x)^{-1}dx\bigg)^{1/2}.
\end{split}
\end{equation*}
Moreover, by using Lemma \ref{BMOp} with $p=2$ and part (1) of Lemma \ref{lemmaA1}, we have
\begin{equation*}
\begin{split}
J^2_0&\leq C\cdot\frac{\omega(4\mathcal{B})^{1/2}}{\omega(\mathcal{B})^{1/2}}\big\|f\big\|_{\mathrm{BMO}(\omega)}
\lesssim\big\|f\big\|_{\mathrm{BMO}(\omega)}.
\end{split}
\end{equation*}
Notice that for any $x\in\mathcal{B}=B(x_0,r)$, $|z-x|<t$ with $t<r$ and $\xi\in\mathbb R^n\setminus(4\mathcal{B})$, one has
\begin{equation*}
\begin{split}
|z-\xi|&\leq |z-x|+|x-x_0|+|x_0-\xi|\\
&<t+r+|x_0-\xi|<2r+|x_0-\xi|\leq\frac{3}{\,2\,}|x_0-\xi|,
\end{split}
\end{equation*}
and
\begin{equation*}
\begin{split}
|x_0-\xi|&\leq|x_0-x|+|x-z|+|z-\xi|\\
&<r+t+|z-\xi|\leq\frac{1}{\,2\,}|x_0-\xi|+|z-\xi|.
\end{split}
\end{equation*}
That is, $|z-\xi|\approx|x_0-\xi|$. Hence, by the size condition of the kernel $\psi$, we can see that for any $x\in\mathcal{B}$,
\begin{equation*}
\begin{split}
&\big|\psi_t*f^3(z)\big|
=\bigg|\int_{\mathbb R^n\setminus 4\mathcal{B}}\psi_t(z-\xi)\big[f(\xi)-f_{4\mathcal{B}}\big]\,d\xi\bigg|\\
&\lesssim\int_{\mathbb R^n\setminus 4\mathcal{B}}\frac{t^{\delta}}{(t+|z-\xi|)^{n+\delta}}
\big|f(\xi)-f_{4\mathcal{B}}\big|\,d\xi\\
&\lesssim t^{\delta}\cdot\sum_{j=2}^{\infty}\int_{2^{j+1}\mathcal{B}\setminus2^j \mathcal{B}}
\frac{1}{|x_0-\xi|^{n+\delta}}\big|f(\xi)-f_{4\mathcal{B}}\big|\,d\xi\\
&\lesssim t^{\delta}\cdot\sum_{j=2}^{\infty}\int_{2^{j+1}\mathcal{B}}
\frac{1}{m(2^{j+1}\mathcal{B})^{1+\delta/n}}\big|f(\xi)-f_{4\mathcal{B}}\big|\,d\xi.
\end{split}
\end{equation*}
Moreover, by part (2) of Lemma \ref{wanglemma1}, we get
\begin{equation*}
\begin{split}
\big|\psi_t*f^3(z)\big|&\lesssim t^{\delta}\cdot\sum_{j=2}^{\infty}
\bigg\{\frac{1}{m(2^j\mathcal{B})^{\frac{\delta}{n}}}
\Big[j\cdot\big\|f\big\|_{\mathrm{BMO}(\omega)}\underset{\xi\in 4\mathcal{B}}{\mathrm{ess\,inf}}\,\omega(\xi)\Big]\bigg\}\\
&\lesssim t^{\delta}\cdot\bigg\{\sum_{j=2}^{\infty}\frac{j}{m(2^j\mathcal{B})^{\frac{\delta}{n}}}\bigg\}
\times\big\|f\big\|_{\mathrm{BMO}(\omega)}\underset{\xi\in\mathcal{B}}{\mathrm{ess\,inf}}\,\omega(\xi).
\end{split}
\end{equation*}
Therefore,
\begin{equation*}
\begin{split}
J^3_0&=\frac{1}{\omega(\mathcal{B})}\int_{\mathcal{B}}\big[\mathcal{S}_0({f}^3)(x)\big]dx\\
&\lesssim\frac{1}{\omega(\mathcal{B})}\int_{\mathcal{B}}\bigg(\int_0^{r}\int_{|z-x|<t}t^{2\delta}\frac{dzdt}{t^{n+1}}\bigg)^{1/2}
\bigg\{\sum_{j=2}^{\infty}\frac{j}{m(2^j\mathcal{B})^{\frac{\delta}{n}}}\bigg\}\\
&\times\big\|f\big\|_{\mathrm{BMO}(\omega)}\underset{x\in\mathcal{B}}{\mathrm{ess\,inf}}\,\omega(x)\\
\end{split}
\end{equation*}
\begin{equation*}
\begin{split}
&\lesssim\frac{1}{\omega(\mathcal{B})}\int_{\mathcal{B}}\bigg(\int_0^{r}t^{2\delta-1}dt\bigg)^{1/2}
\bigg\{\sum_{j=2}^{\infty}\frac{j}{m(2^j\mathcal{B})^{\frac{\delta}{n}}}\bigg\}
\times\big\|f\big\|_{\mathrm{BMO}(\omega)}\underset{x\in\mathcal{B}}{\mathrm{ess\,inf}}\,\omega(x)\\
&\lesssim r^{\delta}\bigg\{\sum_{j=2}^{\infty}\frac{j}{(2^j)^{\delta}r^{\delta}}\bigg\}
\times\big\|f\big\|_{\mathrm{BMO}(\omega)}
\bigg\{\frac{m(\mathcal{B})}{\omega(\mathcal{B})}\cdot\underset{x\in\mathcal{B}}{\mathrm{ess\,inf}}\,\omega(x)\bigg\}\\
&\lesssim\big\|f\big\|_{\mathrm{BMO}(\omega)},
\end{split}
\end{equation*}
where the last inequality follows from \eqref{omegawh}. On the other hand, by the smoothness condition of the kernel $\psi$, we can see that when $x,y\in \mathcal{B}=B(x_0,r)$, $|z|<t$ with $t>r$ and $\xi\in \mathbb R^n\setminus(8\mathcal{B})$,
\begin{equation}\label{regularity2}
\begin{split}
\Big|\psi_t(x+z-\xi)-\psi_t(y+z-\xi)\Big|
&=\frac{1}{t^{n}}\cdot\bigg|\psi\Big(\frac{x+z-\xi}{t}\Big)-\psi\Big(\frac{y+z-\xi}{t}\Big)\bigg|\\
&\lesssim\frac{t^{\delta}\cdot|x-y|^{\gamma}}{(t+|x+z-\xi|)^{n+\delta+\gamma}}.
\end{split}
\end{equation}
Consequently, by \eqref{regularity2}, the triangle inequality and the vanishing condition of the kernel $\psi$, we obtain that for any $x,y\in\mathcal{B}=B(x_0,r)$,
\begin{equation}\label{firstsecond4}
\begin{split}
&\Big|\big|\psi_t*f(x+z)\big|-\big|\psi_t*f(y+z)\big|\Big|\leq\Big|\psi_t*f(x+z)-\psi_t*f(y+z)\Big|\\
&=\bigg|\int_{\mathbb R^n}\big[\psi_t(x+z-\xi)-\psi_t(y+z-\xi)\big]\cdot\big[f(\xi)-f_{8\mathcal{B}}\big]\,d\xi\bigg|\\
&\lesssim\int_{8\mathcal{B}}\big[\big|\psi_t(x+z-\xi)\big|+\big|\psi_t(y+z-\xi)\big|\big]\big|f(\xi)-f_{8\mathcal{B}}\big|\,d\xi\\
&+\int_{\mathbb R^n\setminus8\mathcal{B}}\frac{t^{\delta}\cdot|x-y|^{\gamma}}{(t+|x+z-\xi|)^{n+\delta+\gamma}}
\big|f(\xi)-f_{8\mathcal{B}}\big|\,d\xi.
\end{split}
\end{equation}
From the previous estimate \eqref{A83}, the first term in \eqref{firstsecond4} is naturally controlled by
\begin{equation*}
\begin{split}
\frac{1}{t^{n}}\cdot\int_{8\mathcal{B}}\big|f(\xi)-f_{8\mathcal{B}}\big|\,d\xi.
\end{split}
\end{equation*}
Finally, we consider the second term in \eqref{firstsecond4}. Observe that for any $|z|<t$ with $t>r$, $x\in\mathcal{B}=B(x_0,r)$ and $\xi\in \mathbb R^n\setminus(8\mathcal{B})$,
\begin{equation*}
\begin{split}
t+|x+z-\xi|&\leq t+|z|+|x-x_0|+|x_0-\xi|\\
&<2t+r+|x_0-\xi|\\
&<3t+|x_0-\xi|\leq3(t+|x_0-\xi|),
\end{split}
\end{equation*}
and
\begin{equation*}
\begin{split}
t+|x_0-\xi|&\leq t+|x_0-x|+|x+z-\xi|+|z|\\
&<r+2t+|x+z-\xi|\\
&<3t+|x+z-\xi|\leq 3(t+|x+z-\xi|).
\end{split}
\end{equation*}
This gives
\begin{equation*}
t+|x+z-\xi|\approx t+|x_0-\xi|.
\end{equation*}
Then the second term in \eqref{firstsecond4} is bounded by
\begin{equation*}
t^{\delta}\cdot\int_{\mathbb R^n\setminus 8\mathcal{B}}
\frac{(2r)^{\gamma}}{(t+|x_0-\xi|)^{n+\delta+\gamma}}\big|f(\xi)-f_{8\mathcal{B}}\big|\,d\xi.
\end{equation*}
Thus, by using Minkowski's inequality, we can deduce that for any $x,y\in \mathcal{B}=B(x_0,r)$,
\begin{equation*}
\begin{split}
&\Big|\big[\mathcal{S}_{\infty}({f})(x)\big]-\big[\mathcal{S}_{\infty}({f})(y)\big]\Big|\\
&=\bigg|\bigg(\int_{r}^{\infty}\int_{|z|<t}\big|\psi_t*f(x+z)\big|^2\frac{dzdt}{t^{n+1}}\bigg)^{1/2}
-\bigg(\int_{r}^{\infty}\int_{|z|<t}\big|\psi_t*f(y+z)\big|^2\frac{dzdt}{t^{n+1}}\bigg)^{1/2}\bigg|\\
&\leq\bigg(\int_{r}^{\infty}\int_{|z|<t}
\Big|\big|\psi_t*f(x+z)\big|-\big|\psi_t*f(y+z)\big|\Big|^2\frac{dzdt}{t^{n+1}}\bigg)^{1/2}\\
&\lesssim\bigg(\int_{r}^{\infty}\int_{|z|<t}
\bigg\{\frac{1}{t^{n}}\cdot\int_{8\mathcal{B}}\big|f(\xi)-f_{8\mathcal{B}}\big|\,d\xi\bigg\}^2
\frac{dzdt}{t^{n+1}}\bigg)^{1/2}\\
&+\bigg(\int_{r}^{\infty}\int_{|z|<t}
\bigg\{t^{\delta}\cdot\int_{\mathbb R^n\setminus8\mathcal{B}}\frac{(2r)^{\gamma}}{(t+|x_0-\xi|)^{n+\delta+\gamma}}
\big|f(\xi)-f_{8\mathcal{B}}\big|\,d\xi\bigg\}^2
\frac{dzdt}{t^{n+1}}\bigg)^{1/2}\\
&\leq \int_{8\mathcal{B}}\big|f(\xi)-f_{8\mathcal{B}}\big|\,d\xi
\bigg\{\int_{r}^{\infty}\int_{|z|<t}\frac{1}{t^{3n+1}}\,dzdt\bigg\}^{1/2}\\
&+\int_{\mathbb R^n\setminus8\mathcal{B}}(2r)^{\gamma}\big|f(\xi)-f_{8\mathcal{B}}\big|\,d\xi
\bigg\{\int_{r}^{\infty}\int_{|z|<t}\frac{t^{2\delta-n-1}}{(t+|x_0-\xi|)^{2(n+\delta+\gamma)}}\,dzdt\bigg\}^{1/2}.
\end{split}
\end{equation*}
We now proceed exactly as in Theorem \ref{mainthm1}, and obtain
\begin{equation*}
\begin{split}
&\bigg\{\int_{r}^{\infty}\int_{|z|<t}\frac{1}{t^{3n+1}}\,dzdt\bigg\}^{1/2}\\
&\lesssim\bigg\{\int_{r}^{\infty}\frac{1}{t^{2n+1}}\,dt\bigg\}^{1/2}\lesssim\frac{1}{r^n},
\end{split}
\end{equation*}
and
\begin{equation*}
\begin{split}
&\bigg\{\int_{r}^{\infty}\int_{|z|<t}\frac{t^{2\delta-n-1}}{(t+|x_0-\xi|)^{2(n+\delta+\gamma)}}\,dzdt\bigg\}^{1/2}\\
&\lesssim\bigg\{\int_{r}^{\infty}\frac{t^{2\delta-1}}{(t+|x_0-\xi|)^{2(n+\delta+\gamma)}}\,dt\bigg\}^{1/2}
\lesssim\frac{1}{(|x_0-\xi|)^{n+\gamma}}.
\end{split}
\end{equation*}
Furthermore, from part (1) and part (2) of Lemma \ref{wanglemma1}, it follows that for any $x,y\in \mathcal{B}=B(x_0,r)$,
\begin{equation*}
\begin{split}
&\Big|\big[\mathcal{S}_{\infty}({f})(x)\big]-\big[\mathcal{S}_{\infty}({f})(y)\big]\Big|
\lesssim\frac{1}{m(8\mathcal{B})}\int_{8\mathcal{B}}\big|f(\xi)-f_{8\mathcal{B}}\big|\,d\xi\\
&+\sum_{j=3}^{\infty}\int_{2^{j+1}\mathcal{B}\setminus2^j\mathcal{B}}
\frac{(2r)^{\gamma}}{(|x_0-\xi|)^{n+\gamma}}\big|f(\xi)-f_{8\mathcal{B}}\big|\,d\xi\\
&\lesssim\frac{1}{m(8\mathcal{B})}\int_{8\mathcal{B}}\big|f(\xi)-f_{8\mathcal{B}}\big|\,d\xi
+\sum_{j=3}^{\infty}\bigg\{\frac{(2r)^{\gamma}}{(2^jr)^{\gamma}}\cdot\frac{1}{m(2^{j+1}\mathcal{B})}
\int_{2^{j+1}\mathcal{B}}\big|f(\xi)-f_{8\mathcal{B}}\big|\,d\xi\bigg\}\\
&\lesssim\big\|f\big\|_{\mathrm{BMO}(\omega)}\underset{\xi\in 8\mathcal{B}}{\mathrm{ess\,inf}}\,\omega(\xi)
+\sum_{j=3}^{\infty}\frac{j}{2^{(j-1)\gamma}}
\times\big\|f\big\|_{\mathrm{BMO}(\omega)}\underset{\xi\in8\mathcal{B}}{\mathrm{ess\,inf}}\,\omega(\xi)\\
&\lesssim\big\|f\big\|_{\mathrm{BMO}(\omega)}\underset{x\in8\mathcal{B}}{\mathrm{ess\,inf}}\,\omega(x).
\end{split}
\end{equation*}
Therefore,
\begin{equation*}
\begin{split}
J_{\infty}&=\frac{1}{\omega(\mathcal{B})}\int_{\mathcal{B}}\underset{y\in\mathcal{B}}{\mathrm{ess\,sup}}
\Big|\big[\mathcal{S}_{\infty}({f})(x)\big]-\big[\mathcal{S}_{\infty}({f})(y)\big]\Big|\,dx\\
&\lesssim\big\|f\big\|_{\mathrm{BMO}(\omega)}
\bigg\{\frac{m(\mathcal{B})}{\omega(\mathcal{B})}\cdot\underset{x\in8\mathcal{B}}{\mathrm{ess\,inf}}\,\omega(x)\bigg\}\\
&\leq\big\|f\big\|_{\mathrm{BMO}(\omega)}
\bigg\{\frac{m(\mathcal{B})}{\omega(\mathcal{B})}\cdot\underset{x\in\mathcal{B}}{\mathrm{ess\,inf}}\,\omega(x)\bigg\}
\leq\big\|f\big\|_{\mathrm{BMO}(\omega)},
\end{split}
\end{equation*}
where in the last step we have used inequality \eqref{omegawh}. Combining the above estimates for both terms $J_0$ and $J_{\infty}$ yields the desired result \eqref{mainesti2}. This concludes the proof of Theorem \ref{mainthm2}.
\end{proof}

\section{Proof of Theorem \ref{mainthm3}}
In this section, we will give the proof of Theorem \ref{mainthm3}. We first establish the following weighted $L^2$-boundedness of $\mathcal{G}^{\ast}_{\lambda}$, when $\lambda>2$ and $\omega\in A_2$.

\begin{thm}\label{thmlafunction}
Let $\omega\in A_2$. If $\lambda>2$, then the operator $\mathcal{G}^{\ast}_{\lambda}$ is bounded on $L^2(\omega)$.
\end{thm}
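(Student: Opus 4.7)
The plan is to dominate $\mathcal{G}^{\ast}_{\lambda}(f)$ pointwise by a weighted sum of area integrals with enlarged apertures, and then reduce everything to the weighted $L^2$-boundedness of the ordinary Lusin integral $\mathcal{S}$ (Theorem \ref{thmsfunction} with $p=2$). To this end, I would split the domain ${\mathbb R}^{n+1}_+$ into the central cone $E_0:=\Gamma(x)$ and the annular slabs $E_k:=\{(y,t)\in{\mathbb R}^{n+1}_+:2^{k-1}t\leq |y-x|<2^{k}t\}$ for $k\geq1$. On $E_k$ the weight $(t/(t+|x-y|))^{\lambda n}$ is bounded by a constant multiple of $2^{-k\lambda n}$, and if I write
\begin{equation*}
\big[\mathcal{S}^{(a)}(f)(x)\big]:=\bigg(\iint_{|y-x|<at}\big|\psi_t*f(y)\big|^2\frac{dy\,dt}{t^{n+1}}\bigg)^{1/2},
\end{equation*}
then the decomposition yields the pointwise control
\begin{equation*}
\big[\mathcal{G}^{\ast}_{\lambda}(f)(x)\big]^2\lesssim\sum_{k=0}^{\infty}2^{-k\lambda n}\big[\mathcal{S}^{(2^k)}(f)(x)\big]^2.
\end{equation*}

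Next I would integrate this against $\omega(x)\,dx$ and swap the order of integration by Fubini, observing that
\begin{equation*}
\int_{\mathbb R^n}\big[\mathcal{S}^{(a)}(f)(x)\big]^2\omega(x)\,dx=\iint_{{\mathbb R}^{n+1}_+}\big|\psi_t*f(y)\big|^2\,\omega\big(B(y,at)\big)\frac{dy\,dt}{t^{n+1}}.
\end{equation*}
The $A_2$-doubling estimate in part~(2) of Lemma \ref{lemmaA1} gives $\omega(B(y,at))\leq a^{2n}[\omega]_{A_2}\omega(B(y,t))$ for $a\geq1$, so rearranging yields the aperture-comparison inequality
\begin{equation*}
\big\|\mathcal{S}^{(a)}(f)\big\|_{L^2(\omega)}\leq [\omega]_{A_2}^{1/2}\,a^{n}\,\big\|\mathcal{S}(f)\big\|_{L^2(\omega)}.
\end{equation*}

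Inserting $a=2^k$ into the pointwise decomposition and summing gives
\begin{equation*}
\big\|\mathcal{G}^{\ast}_{\lambda}(f)\big\|_{L^2(\omega)}^2\lesssim[\omega]_{A_2}\big\|\mathcal{S}(f)\big\|_{L^2(\omega)}^2\sum_{k=0}^{\infty}2^{k(2-\lambda)n},
\end{equation*}
where the geometric series converges precisely because $\lambda>2$ (this is exactly where the hypothesis is used). Finally, Theorem \ref{thmsfunction} with $p=2$ bounds $\|\mathcal{S}(f)\|_{L^2(\omega)}$ by $\|f\|_{L^2(\omega)}$, completing the argument.

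The only subtle point is the aperture-dilation estimate, which must be done with the correct $A_2$-growth exponent; that is where the weighted proof really diverges from the classical one, and it is the reason $\omega\in A_2$ (the hypothesis used for $\mathcal{S}$) suffices here as well. Everything else is a routine geometric decomposition of the $g^{\ast}_{\lambda}$-cone and a geometric-series summation.
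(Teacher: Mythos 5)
Your proposal is correct and follows essentially the same route as the paper: decompose the $g^{\ast}_{\lambda}$-integral over the central cone and dyadic annular slabs, dominate pointwise by $\sum_{k}2^{-k\lambda n}[\mathcal{S}^{(2^k)}(f)]^2$, use Fubini together with the $A_2$ dilation estimate $\omega(B(y,at))\leq a^{2n}[\omega]_{A_2}\,\omega(B(y,t))$ to compare apertures, and sum the geometric series $\sum_k 2^{k(2-\lambda)n}$ using $\lambda>2$ before invoking the weighted $L^2$-bound for $\mathcal{S}$. The only differences are notational (your $\mathcal{S}^{(a)}$ versus the paper's $\mathcal{S}_{2^{\ell}}$, and folding the $k=0$ term into the sum), so there is nothing to add.
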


\begin{proof}
From the definition of $\mathcal{G}^{\ast}_{\lambda}$, we readily see that
\begin{equation*}
\begin{split}
\big[\mathcal{G}^{\ast}_{\lambda}({f})(x)\big]^2&=\int_0^{\infty}\int_{\mathbb R^n}
\Big(\frac{t}{t+|x-z|}\Big)^{\lambda n}\big|\psi_t*f(z)\big|^2\frac{dzdt}{t^{n+1}}\\
&=\int_0^{\infty}\int_{|x-z|<t}\Big(\frac{t}{t+|x-z|}\Big)^{\lambda n}
\big|\psi_t*f(z)\big|^2\frac{dzdt}{t^{n+1}}\\
&+\sum_{\ell=1}^{\infty}\int_0^{\infty}\int_{2^{\ell-1}t\leq|x-z|<2^{\ell}t}\Big(\frac{t}{t+|x-z|}\Big)^{\lambda n}
\big|\psi_t*f(z)\big|^2\frac{dzdt}{t^{n+1}}\\
&\lesssim \big[\mathcal{S}({f})(x)\big]^2+\sum_{\ell=1}^{\infty}2^{-\ell\lambda n}\big[\mathcal{S}_{2^{\ell}}({f})(x)\big]^2,
\end{split}
\end{equation*}
where
\begin{equation*}
\big[\mathcal{S}_{2^{\ell}}({f})(x)\big]
:=\bigg(\int_0^{\infty}\int_{|x-z|<2^{\ell}t}\big|\psi_t*f(z)\big|^2\frac{dzdt}{t^{n+1}}\bigg)^{1/2}.
\end{equation*}
Since $\omega\in A_2$, by part (2) of Lemma \ref{lemmaA1}, we get
\begin{equation*}
\omega\big(B(z,2^{\ell}t)\big)
=\omega\big(2^{\ell}{B}(z,t)\big)\leq 2^{2\ell n}[\omega]_{A_2}\cdot\omega\big({B}(z,t)\big),
\end{equation*}
which means that
\begin{equation*}
\int_{|z-x|<2^{\ell}t}\omega(x)\,dx\leq 2^{2\ell n}[\omega]_{A_2}\cdot\int_{|z-x|<t}\omega(x)\,dx.
\end{equation*}
This estimate, together with Fubini's theorem, implies that for all $f\in L^2(\omega)$,
\begin{equation*}
\begin{split}
\big\|\mathcal{S}_{2^{\ell}}(f)\big\|^2_{L^2(\omega)}
&=\int_{\mathbb R^n}\bigg(\int_0^{\infty}\int_{|z-x|<2^{\ell}t}
\big|\psi_t*f(z)\big|^2\frac{dzdt}{t^{n+1}}\bigg)\omega(x)\,dx\\
&=\iint_{\mathbb{R}^{n+1}_{+}}\bigg(\int_{\mathbb R^n}\chi_{|z-x|<2^{\ell}t}\omega(x)\,dx\bigg)
\big|\psi_t*f(z)\big|^2\frac{dzdt}{t^{n+1}}\\
&\leq2^{2\ell n}[\omega]_{A_2}\iint_{\mathbb{R}^{n+1}_{+}}\bigg(\int_{|z-x|<t}\omega(x)\,dx\bigg)
\big|\psi_t*f(z)\big|^2\frac{dzdt}{t^{n+1}}\\
&=2^{2\ell n}[\omega]_{A_2}\big\|\mathcal{S}(f)\big\|_{L^2(\omega)}^2.
\end{split}
\end{equation*}
Here $\chi_{|z-x|<2^{\ell}t}$ denotes the characteristic function of the set $\{x:|z-x|<2^{\ell}t\}$. Consequently, by Theorem \ref{thmsfunction} with $p=2$ and the fact that $\omega\in A_2$, we get
\begin{equation*}
\big\|\mathcal{G}^{\ast}_{\lambda}({f})\big\|^2_{L^2(\omega)}
\leq C\bigg[\big\|f\big\|_{L^2(\omega)}^2+\sum_{\ell=1}^{\infty}2^{-\ell\lambda n}2^{2\ell n}\big\|f\big\|_{L^2(\omega)}^2\bigg],
\end{equation*}
and hence, when $\lambda>2$, there exists a positive constant $C>0$, independent of $f$, such that
\begin{equation*}
\big\|\mathcal{G}^{\ast}_{\lambda}({f})\big\|_{L^2(\omega)}\leq C\big\|f\big\|_{L^2(\omega)}.
\end{equation*}
We are done.
\end{proof}

\begin{proof}[Proof of Theorem $\ref{mainthm3}$]
Let ${f}\in \mathrm{BMO}(\omega)$, $\omega\in A_1$ and $\lambda>3+{(2\delta+2\gamma)}/n$. Let $B(x_0,r)\subset\mathbb R^n$ be the open ball of radius $r\in(0,\infty)$ centered at $x_0\in \mathbb R^n$. Similar to the proofs of Theorems \ref{mainthm1} and \ref{mainthm2}, by the definition of $\mathrm{BLO}(\omega)$, we are going to prove that for any given ball $\mathcal{B}=B(x_0,r)$, the following inequality holds:
\begin{equation}\label{mainesti3}
\frac{1}{\omega(\mathcal{B})}\int_{\mathcal{B}}
\Big[\big[\mathcal{G}^{\ast}_{\lambda}({f})(x)\big]-
\underset{y\in\mathcal{B}}{\mathrm{ess\,inf}}\,\big[\mathcal{G}^{\ast}_{\lambda}({f})(y)\big]\Big]\,dx
\lesssim\big\|f\big\|_{\mathrm{BMO}(\omega)}.
\end{equation}
In order to prove \eqref{mainesti3}, we decompose the integral defining $\mathcal{G}^{\ast}_{\lambda}({f})$ into two parts.
\begin{equation*}
\begin{split}
\big[\mathcal{G}^{\ast}_{\lambda}({f})(x)\big]&=\bigg(\int_0^{\infty}\int_{\mathbb R^n}
\Big(\frac{t}{t+|x-z|}\Big)^{\lambda n}
\big|\psi_t*f(z)\big|^2\frac{dzdt}{t^{n+1}}\bigg)^{1/2}\\
\end{split}
\end{equation*}
\begin{equation*}
\begin{split}
&\leq\bigg(\int_0^{r}\int_{\mathbb R^n}
\Big(\frac{t}{t+|x-z|}\Big)^{\lambda n}\big|\psi_t*f(z)\big|^2\frac{dzdt}{t^{n+1}}\bigg)^{1/2}\\
&+\bigg(\int_{r}^{\infty}\int_{\mathbb R^n}
\Big(\frac{t}{t+|x-z|}\Big)^{\lambda n}\big|\psi_t*f(z)\big|^2\frac{dzdt}{t^{n+1}}\bigg)^{1/2}\\
&:=\big[\mathcal{G}^{\ast}_{\lambda,0}({f})(x)\big]+\big[\mathcal{G}^{\ast}_{\lambda,\infty}({f})(x)\big].
\end{split}
\end{equation*}
It is easy to see that
\begin{equation*}
\big[\mathcal{G}^{\ast}_{\lambda,0}({f})(x)\big],\big[\mathcal{G}^{\ast}_{\lambda,\infty}({f})(x)\big]
\leq\big[\mathcal{G}^{\ast}_{\lambda}({f})(x)\big]
\leq\big[\mathcal{G}^{\ast}_{\lambda,0}({f})(x)\big]+\big[\mathcal{G}^{\ast}_{\lambda,\infty}({f})(x)\big].
\end{equation*}
Consequently, in view of \eqref{essinf}, we can deduce that
\begin{equation*}
\begin{split}
&\frac{1}{\omega(\mathcal{B})}\int_{\mathcal{B}}
\Big[\big[\mathcal{G}^{\ast}_{\lambda}({f})(x)\big]-
\underset{y\in\mathcal{B}}{\mathrm{ess\,inf}}\,\big[\mathcal{G}^{\ast}_{\lambda}({f})(y)\big]\Big]\,dx\\
&\leq\frac{1}{\omega(\mathcal{B})}\int_{\mathcal{B}}
\Big[\big[\mathcal{G}^{\ast}_{\lambda,0}({f})(x)\big]+\big[\mathcal{G}^{\ast}_{\lambda,\infty}({f})(x)\big]
-\underset{y\in\mathcal{B}}{\mathrm{ess\,inf}}\,\big[\mathcal{G}^{\ast}_{\lambda}({f})(y)\big]\Big]\,dx\\
&\leq\frac{1}{\omega(\mathcal{B})}\int_{\mathcal{B}}
\Big[\big[\mathcal{G}^{\ast}_{\lambda,0}({f})(x)\big]+\big[\mathcal{G}^{\ast}_{\lambda,\infty}({f})(x)\big]
-\underset{y\in\mathcal{B}}{\mathrm{ess\,inf}}\,\big[\mathcal{G}^{\ast}_{\lambda,\infty}({f})(y)\big]\Big]\,dx\\
&\leq\frac{1}{\omega(\mathcal{B})}\int_{\mathcal{B}}\big[\mathcal{G}^{\ast}_{\lambda,0}({f})(x)\big]dx
+\frac{1}{\omega(\mathcal{B})}\int_{\mathcal{B}}
\underset{y\in\mathcal{B}}{\mathrm{ess\,sup}}\Big|
\big[\mathcal{G}^{\ast}_{\lambda,\infty}({f})(x)\big]-\big[\mathcal{G}^{\ast}_{\lambda,\infty}({f})(y)\big]\Big|\,dx\\
&:=\mathcal{K}_0+\mathcal{K}_{\infty}.
\end{split}
\end{equation*}
As before, to control the first term $\mathcal{K}_0$, we use the decomposition
\begin{equation*}
\begin{split}
f(x)&=f_{4\mathcal{B}}+[f(x)-f_{4\mathcal{B}}]\cdot\chi_{4\mathcal{B}}(x)
+[f(x)-f_{4\mathcal{B}}]\cdot\chi_{(4\mathcal{B})^{\complement}}(x)\\
&:=f^1(x)+f^2(x)+f^3(x).
\end{split}
\end{equation*}
By the vanishing condition of the kernel $\mathcal{\psi}$, we have that for any $x\in\mathbb R^n$ and $t>0$,
\begin{equation*}
\psi_t*f(x)=\psi_t*f^2(x)+\psi_t*f^3(x)\quad \& \quad \big[\mathcal{G}^{\ast}_{\lambda,0}({f}^1)(x)\big]\equiv0.
\end{equation*}
Then we can write
\begin{equation*}
\begin{split}
\mathcal{K}_0&=\frac{1}{\omega(\mathcal{B})}\int_{\mathcal{B}}\big[\mathcal{G}^{\ast}_{\lambda,0}({f})(x)\big]dx\\
&\leq\frac{1}{\omega(\mathcal{B})}\int_{\mathcal{B}}\big[\mathcal{G}^{\ast}_{\lambda,0}({f}^2)(x)\big]dx
+\frac{1}{\omega(\mathcal{B})}\int_{\mathcal{B}}\big[\mathcal{G}^{\ast}_{\lambda,0}({f}^3)(x)\big]dx\\
&:=\mathcal{K}^2_0+\mathcal{K}^3_0.
\end{split}
\end{equation*}
Applying Theorem \ref{thmlafunction},Cauchy--Schwarz's inequality and Lemma \ref{BMOp}, we have
\begin{equation*}
\begin{split}
\mathcal{K}^2_0
&\leq\frac{1}{\omega(\mathcal{B})}\bigg(\int_{\mathcal{B}}\big[\mathcal{G}^{\ast}_{\lambda,0}({f}^2)(x)\big]^2\omega(x)^{-1}dx\bigg)^{1/2}
\bigg(\int_{\mathcal{B}}\omega(x)\,dx\bigg)^{1/2}\\
&\leq\frac{1}{\omega(\mathcal{B})^{1/2}}\big\|\mathcal{G}^{\ast}_{\lambda}({f}^2)\big\|_{L^2(\omega^{-1})}\\
&\leq C\cdot\frac{\omega(4\mathcal{B})^{1/2}}{\omega(\mathcal{B})^{1/2}}
\bigg(\frac{1}{\omega(4\mathcal{B})}\int_{4\mathcal{B}}\big|f(x)-f_{4\mathcal{B}}\big|^2\omega(x)^{-1}dx\bigg)^{1/2}\\
&\leq C\big\|f\big\|_{\mathrm{BMO}(\omega)}.
\end{split}
\end{equation*}
Let us now estimate the other term $\mathcal{K}^3_0$. First, by the size condition of the kernel $\psi$, we can see that for any $z\in\mathbb R^n$ and $0<t<r$,
\begin{equation*}
\begin{split}
&\big|\psi_t*f^3(z)\big|
=\bigg|\int_{\mathbb R^n\setminus 4\mathcal{B}}\psi_t(z-\xi)\big[f(\xi)-f_{4\mathcal{B}}\big]\,d\xi\bigg|\\
&\lesssim\int_{\mathbb R^n\setminus 4\mathcal{B}}\frac{t^{\delta}}{(t+|z-\xi|)^{n+\delta}}
\big|f(\xi)-f_{4\mathcal{B}}\big|\,d\xi.\\
\end{split}
\end{equation*}
Next, by Minkowski's inequality for integrals,
\begin{equation*}
\begin{split}
\big[\mathcal{G}^{\ast}_{\lambda,0}({f}^3)(x)\big]
&\lesssim\int_{\mathbb R^n\setminus 4\mathcal{B}}\big|f(\xi)-f_{4\mathcal{B}}\big|\,d\xi\\
&\times\bigg(\int_0^{r}\int_{\mathbb R^n}\Big(\frac{t}{t+|x-z|}\Big)^{\lambda n}
\frac{t^{2\delta}}{(t+|z-\xi|)^{2(n+\delta)}}\frac{dzdt}{t^{n+1}}\bigg)^{1/2}.
\end{split}
\end{equation*}
We now claim that
\begin{equation}\label{hard1}
\mathrm{I}:=\int_{\mathbb R^n}\left(\frac{t}{t+|x-z|}\right)^{\lambda n}\frac{t^{2\delta-n}}{(t+|z-\xi|)^{2(n+\delta)}}dz
\lesssim\frac{t^{2\delta}}{(|x_0-\xi|)^{2(n+\delta)}}.
\end{equation}
In fact, we have
\begin{equation*}
\begin{split}
\mathrm{I}&=\int_{|x-z|<t}\left(\frac{t}{t+|x-z|}\right)^{\lambda n}\frac{t^{2\delta-n}}{(t+|z-\xi|)^{2(n+\delta)}}dz\\
&+\sum_{\ell=1}^{\infty}\int_{2^{\ell-1}t\leq|x-z|<2^{\ell}t}\left(\frac{t}{t+|x-z|}\right)^{\lambda n}\frac{t^{2\delta-n}}{(t+|z-\xi|)^{2(n+\delta)}}dz\\
&\leq\int_{|x-z|<t}\frac{t^{2\delta-n}}{(t+|z-\xi|)^{2(n+\delta)}}dz\\
&+\sum_{\ell=1}^{\infty}\int_{2^{\ell-1}t\leq|x-z|<2^{\ell}t}\Big(\frac{1}{2^{\ell}}\Big)^{\lambda n}\frac{t^{2\delta-n}}{(t+|z-\xi|)^{2(n+\delta)}}dz.
\end{split}
\end{equation*}
We do our estimates according to different cases.
\begin{itemize}
  \item When $|x-z|<t$ with $0<t<r$, $x\in\mathcal{B}=B(x_0,r)$ and $\xi\in\mathbb R^n\setminus (4\mathcal{B})$, one has
  \begin{equation*}
  |z-x_0|\leq |z-x|+|x-x_0|<2r\leq\frac{|x_0-\xi|}{2},
  \end{equation*}
  and
  \begin{equation*}
  |z-\xi|\geq |x_0-\xi|-|z-x_0|>\frac{|x_0-\xi|}{2}.
  \end{equation*}
  Hence,
  \begin{equation*}
  \begin{split}
  \int_{|x-z|<t}\frac{t^{2\delta-n}}{(t+|z-\xi|)^{2(n+\delta)}}dz
  &\leq \int_{|x-z|<t}\frac{t^{2\delta-n}}{(|z-\xi|)^{2(n+\delta)}}dz\\
  &\lesssim\frac{t^{2\delta}}{(|x_0-\xi|)^{2(n+\delta)}},
  \end{split}
  \end{equation*}
  as desired.
  \item When $|x-z|<2^{\ell}t$ with $0<t<r$ and $\ell\geq1$, $x\in\mathcal{B}=B(x_0,r)$ and $\xi\in\mathbb R^n\setminus (2^{\ell+2}\mathcal{B})$, in this case, one has
  \begin{equation*}
  |z-x_0|\leq |z-x|+|x-x_0|<2^{\ell}r+r\leq 2^{\ell+1}r\leq\frac{|x_0-\xi|}{2},
  \end{equation*}
  and
  \begin{equation*}
  |z-\xi|\geq |x_0-\xi|-|z-x_0|>\frac{|x_0-\xi|}{2}.
  \end{equation*}
  Hence, for any fixed $\ell\in \mathbb{N}$,
  \begin{equation*}
  \begin{split}
  &\int_{2^{\ell-1}t\leq|x-z|<2^{\ell}t}\Big(\frac{1}{2^{\ell}}\Big)^{\lambda n}\frac{t^{2\delta-n}}{(t+|z-\xi|)^{2(n+\delta)}}dz\\
  &\leq\int_{2^{\ell-1}t\leq|x-z|<2^{\ell}t}\Big(\frac{1}{2^{\ell}}\Big)^{\lambda n}
  \frac{t^{2\delta-n}}{(|z-\xi|)^{2(n+\delta)}}dz\\
  &\lesssim\Big(\frac{1}{2^{\ell}}\Big)^{(\lambda-1)n}\frac{t^{2\delta}}{(|x_0-\xi|)^{2(n+\delta)}}.
  \end{split}
  \end{equation*}
  \item When $|x-z|<2^{\ell}t$ with $0<t<r$ and $\ell\geq1$, $x\in\mathcal{B}=B(x_0,r)$ and $\xi\in\mathbb (2^{\ell+2}\mathcal{B})\setminus (4\mathcal{B})$, in this case, one has
  \begin{equation*}
  |\xi-x_0|\geq4r\Longrightarrow r\leq\frac{|\xi-x_0|}{4},
  \end{equation*}
  and
  \begin{equation*}
  \begin{split}
  |x_0-\xi|&\leq |x_0-x|+|x-z|+|z-\xi|\\
  &<r+2^{\ell}t+|z-\xi|<\frac{|x_0-\xi|}{4}+2^{\ell}\big(t+|z-\xi|\big),
  \end{split}
  \end{equation*}
  which in turn implies that
  \begin{equation*}
  |x_0-\xi|<\frac{4}{\,3\,}\cdot2^{\ell}\big(t+|z-\xi|\big).
  \end{equation*}
  Thus, for any fixed $\ell\in \mathbb{N}$, it follows that
  \begin{equation*}
  \begin{split}
  &\int_{2^{\ell-1}t\leq|x-z|<2^{\ell}t}\Big(\frac{1}{2^{\ell}}\Big)^{\lambda n}\frac{t^{2\delta-n}}{(t+|z-\xi|)^{2(n+\delta)}}dz\\
  &\lesssim\int_{2^{\ell-1}t\leq|x-z|<2^{\ell}t}\Big(\frac{1}{2^{\ell}}\Big)^{\lambda n}
  \big(2^{\ell}\big)^{2(n+\delta)}\frac{t^{2\delta-n}}{(|x_0-\xi|)^{2(n+\delta)}}dz\\
  &\lesssim\Big(\frac{1}{2^{\ell}}\Big)^{\lambda n-3n-2\delta}\frac{t^{2\delta}}{(|x_0-\xi|)^{2(n+\delta)}}.
  \end{split}
  \end{equation*}
\end{itemize}
Summing up the estimates derived above, we conclude that
\begin{equation*}
\begin{split}
\mathrm{I}&\lesssim\frac{t^{2\delta}}{(|x_0-\xi|)^{2(n+\delta)}}+\sum_{\ell=1}^{\infty}
\Big[\Big(\frac{1}{2^{\ell}}\Big)^{(\lambda-1)n}+
\Big(\frac{1}{2^{\ell}}\Big)^{\lambda n-3n-2\delta}\Big]
\frac{t^{2\delta}}{(|x_0-\xi|)^{2(n+\delta)}}\\
&\lesssim\frac{t^{2\delta}}{(|x_0-\xi|)^{2(n+\delta)}},
\end{split}
\end{equation*}
where in the last step we have used the fact that $\lambda>3+{(2\delta)}/n>1$. This proves \eqref{hard1}. From this estimate and part (2) of Lemma \ref{wanglemma1}, it follows that
\begin{equation*}
\begin{split}
\mathcal{K}^3_0&\lesssim \frac{m(\mathcal{B})}{\omega(\mathcal{B})}\cdot
\int_{\mathbb R^n\setminus 4\mathcal{B}}\frac{1}{|x_0-\xi|^{n+\delta}}\big|f(\xi)-f_{4\mathcal{B}}\big|\,d\xi
\bigg(\int_0^{r}t^{2\delta-1}dt\bigg)^{1/2}\\
&\lesssim\frac{m(\mathcal{B})}{\omega(\mathcal{B})}\cdot
\sum_{j=2}^{\infty}\int_{2^{j+1}\mathcal{B}\setminus2^j \mathcal{B}}
\frac{r^{\delta}}{|x_0-\xi|^{n+\delta}}\big|f(\xi)-f_{4\mathcal{B}}\big|\,d\xi\\
&\lesssim \frac{m(\mathcal{B})}{\omega(\mathcal{B})}\cdot\sum_{j=2}^{\infty}\int_{2^{j+1}\mathcal{B}}
\frac{r^{\delta}}{m(2^{j+1}\mathcal{B})^{1+\delta/n}}\big|f(\xi)-f_{4\mathcal{B}}\big|\,d\xi\\
&\lesssim r^{\delta}\bigg\{\sum_{j=2}^{\infty}\frac{j}{(2^j)^{\delta}r^{\delta}}\bigg\}
\times\big\|f\big\|_{\mathrm{BMO}(\omega)}
\bigg\{\frac{m(\mathcal{B})}{\omega(\mathcal{B})}\cdot\underset{x\in\mathcal{B}}{\mathrm{ess\,inf}}\,\omega(x)\bigg\}\\
&\lesssim\big\|f\big\|_{\mathrm{BMO}(\omega)},
\end{split}
\end{equation*}
where the last inequality follows from \eqref{omegawh}. On the other hand, by \eqref{regularity2}, \eqref{firstsecond4} and the vanishing condition of the kernel $\psi$, we can see that for any $x,y\in \mathcal{B}=B(x_0,r)$ and $t>r$,
\begin{equation}\label{firstsecond6}
\begin{split}
&\Big|\big|\psi_t*f(x+z)\big|-\big|\psi_t*f(y+z)\big|\Big|\leq\Big|\psi_t*f(x+z)-\psi_t*f(y+z)\Big|\\
&=\bigg|\int_{\mathbb R^n}\big[\psi_t(x+z-\xi)-\psi_t(y+z-\xi)\big]\cdot\big[f(\xi)-f_{8\mathcal{B}}\big]\,d\xi\bigg|\\
&\lesssim\int_{8\mathcal{B}}\big[\big|\psi_t(x+z-\xi)\big|+\big|\psi_t(y+z-\xi)\big|\big]\big|f(\xi)-f_{8\mathcal{B}}\big|\,d\xi\\
&+\int_{\mathbb R^n\setminus8\mathcal{B}}\frac{t^{\delta}\cdot|x-y|^{\gamma}}{(t+|x+z-\xi|)^{n+\delta+\gamma}}
\big|f(\xi)-f_{8\mathcal{B}}\big|\,d\xi.
\end{split}
\end{equation}
Consequently, by \eqref{firstsecond6} and the Minkowski inequality for integrals, we obtain that for any $x,y\in\mathcal{B}=B(x_0,r)$,
\begin{equation}\label{lastterm}
\begin{split}
&\Big|\big[\mathcal{G}^{\ast}_{\lambda,\infty}({f})(x)\big]-\big[\mathcal{G}^{\ast}_{\lambda,\infty}({f})(y)\big]\Big|\\
&=\bigg|\bigg(\int_{r}^{\infty}\int_{\mathbb R^n}\Big(\frac{t}{t+|z|}\Big)^{\lambda n}
\big|\psi_t*f(x+z)\big|^2\frac{dzdt}{t^{n+1}}\bigg)^{1/2}\\
&-\bigg(\int_{r}^{\infty}\int_{\mathbb R^n}\Big(\frac{t}{t+|z|}\Big)^{\lambda n}
\big|\psi_t*f(y+z)\big|^2\frac{dzdt}{t^{n+1}}\bigg)^{1/2}\bigg|\\
&\leq\bigg(\int_{r}^{\infty}\int_{\mathbb R^n}\Big(\frac{t}{t+|z|}\Big)^{\lambda n}
\Big|\big|\psi_t*f(x+z)\big|-\big|\psi_t*f(y+z)\big|\Big|^2\frac{dzdt}{t^{n+1}}\bigg)^{1/2}\\
&\lesssim\bigg(\int_{r}^{\infty}\int_{\mathbb R^n}\Big(\frac{t}{t+|z|}\Big)^{\lambda n}
\bigg\{\int_{8\mathcal{B}}\big[\big|\psi_t(x+z-\xi)\big|+\big|\psi_t(y+z-\xi)\big|\big]\big|f(\xi)-f_{8\mathcal{B}}\big|\,d\xi\bigg\}^2
\frac{dzdt}{t^{n+1}}\bigg)^{1/2}\\
&+\bigg(\int_{r}^{\infty}\int_{\mathbb R^n}\Big(\frac{t}{t+|z|}\Big)^{\lambda n}
\bigg\{t^{\delta}\cdot\int_{\mathbb R^n\setminus8\mathcal{B}}\frac{(2r)^{\gamma}}{(t+|x+z-\xi|)^{n+\delta+\gamma}}
\big|f(\xi)-f_{8\mathcal{B}}\big|\,d\xi\bigg\}^2
\frac{dzdt}{t^{n+1}}\bigg)^{1/2}.
\end{split}
\end{equation}
In view of \eqref{A83}, we have that for any $t>0$,
\begin{equation*}
\big|\psi_t(x+z-\xi)\big|+\big|\psi_t(y+z-\xi)\big|\lesssim\frac{1}{t^n}.
\end{equation*}
Moreover,
\begin{equation*}
\begin{split}
&\bigg\{\int_{r}^{\infty}\int_{\mathbb R^n}\left(\frac{t}{t+|z|}\right)^{\lambda n}\frac{1}{t^{3n+1}}\,dzdt\bigg\}^{1/2}\\
&=\bigg\{\int_{r}^{\infty}\int_{|z|<t}\left(\frac{t}{t+|z|}\right)^{\lambda n}\frac{1}{t^{3n+1}}\,dzdt
+\int_{r}^{\infty}\int_{|z|\geq t}\left(\frac{t}{t+|z|}\right)^{\lambda n}\frac{1}{t^{3n+1}}\,dzdt\bigg\}^{1/2}\\
&\leq\bigg\{\int_{r}^{\infty}\int_{|z|<t}\frac{1}{t^{3n+1}}\,dzdt
+\int_{r}^{\infty}\int_{|z|\geq t}\left(\frac{t}{|z|}\right)^{\lambda n}\frac{1}{t^{3n+1}}\,dzdt\bigg\}^{1/2}.
\end{split}
\end{equation*}
Using polar coordinates and the assumption $\lambda>1$, we get
\begin{equation*}
\begin{split}
\int_{r}^{\infty}\int_{|z|\geq t}\left(\frac{t}{|z|}\right)^{\lambda n}\frac{1}{t^{3n+1}}\,dzdt
&\lesssim\int_{r}^{\infty}\bigg(\int_{t}^{\infty}\frac{\varrho^{n-1}}{\varrho^{\lambda n}}\,d\varrho\bigg)\frac{t^{\lambda n}}{t^{3n+1}}\,dt\\
&\lesssim\int_{r}^{\infty}\frac{1}{t^{2n+1}}\,dt\lesssim\frac{1}{r^{2n}}.
\end{split}
\end{equation*}
In addition,
\begin{equation*}
\int_{r}^{\infty}\int_{|z|<t}\frac{1}{t^{3n+1}}\,dzdt
\lesssim\int_{r}^{\infty}\frac{1}{t^{2n+1}}\,dt\lesssim\frac{1}{r^{2n}}.
\end{equation*}
Thus it follows that
\begin{equation*}
\bigg\{\int_{r}^{\infty}\int_{\mathbb R^n}\left(\frac{t}{t+|z|}\right)^{\lambda n}\frac{1}{t^{3n+1}}\,dzdt\bigg\}^{1/2}
\lesssim\frac{1}{r^{n}}.
\end{equation*}
Hence, by the Minkowski inequality for integrals, we can see that the first term in \eqref{lastterm} is bounded by
\begin{equation*}
\begin{split}
&\int_{8\mathcal{B}}\big|f(\xi)-f_{8\mathcal{B}}\big|\,d\xi
\bigg\{\int_{r}^{\infty}\int_{\mathbb R^n}\left(\frac{t}{t+|z|}\right)^{\lambda n}\frac{1}{t^{3n+1}}\,dzdt\bigg\}^{1/2}\\
&\lesssim\frac{1}{m(8\mathcal{B})}\int_{8\mathcal{B}}\big|f(\xi)-f_{8\mathcal{B}}\big|\,d\xi.
\end{split}
\end{equation*}
We now claim that
\begin{equation}\label{hard2}
\mathrm{II}:=\int_{\mathbb R^n}\left(\frac{t}{t+|z|}\right)^{\lambda n}
\frac{t^{2\delta-n}}{(t+|x+z-\xi|)^{2(n+\delta+\gamma)}}dz
\lesssim\frac{t^{2\delta}}{(t+|x_0-\xi|)^{2(n+\delta+\gamma)}}.
\end{equation}
In fact, we have
\begin{equation*}
\begin{split}
\mathrm{II}&=\int_{|z|<t}\left(\frac{t}{t+|z|}\right)^{\lambda n}\frac{t^{2\delta-n}}{(t+|x+z-\xi|)^{2(n+\delta+\gamma)}}dz\\
&+\sum_{\ell=1}^{\infty}\int_{2^{\ell-1}t\leq|z|<2^{\ell}t}
\left(\frac{t}{t+|z|}\right)^{\lambda n}\frac{t^{2\delta-n}}{(t+|x+z-\xi|)^{2(n+\delta+\gamma)}}dz\\
&\leq\int_{|z|<t}\frac{t^{2\delta-n}}{(t+|x+z-\xi|)^{2(n+\delta+\gamma)}}dz\\
&+\sum_{\ell=1}^{\infty}\int_{2^{\ell-1}t\leq|z|<2^{\ell}t}
\Big(\frac{1}{2^{\ell}}\Big)^{\lambda n}\frac{t^{2\delta-n}}{(t+|x+z-\xi|)^{2(n+\delta+\gamma)}}dz.
\end{split}
\end{equation*}
\begin{itemize}
  \item When $|z|<t$ with $t\geq r$, $x\in\mathcal{B}=B(x_0,r)$ and $\xi\in \mathbb R^n\setminus(8\mathcal{B})$, one has
  \begin{equation*}
  |x_0-x-z|\leq |x_0-x|+|z|<r+t\leq 2t,
  \end{equation*}
  and
  \begin{equation*}
  t+|x_0-\xi|\leq t+|x_0-x-z|+|x+z-\xi|\leq3(t+|x+z-\xi|).
  \end{equation*}
  Hence,
  \begin{equation*}
  \begin{split}
  \int_{|z|<t}\frac{t^{2\delta-n}}{(t+|x+z-\xi|)^{2(n+\delta+\gamma)}}dz
  &\lesssim\int_{|z|<t}\frac{t^{2\delta-n}}{(t+|x_0-\xi|)^{2(n+\delta+\gamma)}}dz\\
  &\lesssim\frac{t^{2\delta}}{(t+|x_0-\xi|)^{2(n+\delta+\gamma)}},
  \end{split}
  \end{equation*}
  as desired.
  \item When $|z|<2^{\ell}t$ with $t\geq r$ and $\ell\geq1$, $x\in\mathcal{B}=B(x_0,r)$ and $\xi\in\mathbb R^n\setminus (8\mathcal{B})$, in this case, one has
  \begin{equation*}
  |x_0-x-z|\leq |x_0-x|+|z|<r+2^{\ell}t\leq t+2^{\ell}t,
  \end{equation*}
  and
  \begin{equation*}
  \begin{split}
  t+|x_0-\xi|&\leq t+|x_0-x-z|+|x+z-\xi|\\
  &<2t+2^{\ell}t+|x+z-\xi|\leq 2^{\ell+1}\big(t+|x+z-\xi|\big).
  \end{split}
  \end{equation*}
  Thus, for any fixed $\ell\in \mathbb{N}$, it follows that
  \begin{equation*}
  \begin{split}
&\int_{2^{\ell-1}t\leq|z|<2^{\ell}t}
\Big(\frac{1}{2^{\ell}}\Big)^{\lambda n}\frac{t^{2\delta-n}}{(t+|x+z-\xi|)^{2(n+\delta+\gamma)}}dz\\
&\lesssim\int_{2^{\ell-1}t\leq|x-z|<2^{\ell}t}\Big(\frac{1}{2^{\ell}}\Big)^{\lambda n}
  \big(2^{\ell}\big)^{2(n+\delta+\gamma)}\frac{t^{2\delta-n}}{(t+|x_0-\xi|)^{2(n+\delta+\gamma)}}dz\\
  &\lesssim\Big(\frac{1}{2^{\ell}}\Big)^{\lambda n-3n-2\delta-2\gamma}\frac{t^{2\delta}}{(t+|x_0-\xi|)^{2(n+\delta+\gamma)}}.
  \end{split}
  \end{equation*}
\end{itemize}
Summing up the estimates derived above, we conclude that
\begin{equation*}
\begin{split}
\mathrm{II}&\lesssim\frac{t^{2\delta}}{(t+|x_0-\xi|)^{2(n+\delta+\gamma)}}+\sum_{\ell=1}^{\infty}
\Big[\Big(\frac{1}{2^{\ell}}\Big)^{\lambda n-3n-2\delta-2\gamma}\Big]
\frac{t^{2\delta}}{(t+|x_0-\xi|)^{2(n+\delta+\gamma)}}\\
&\lesssim\frac{t^{2\delta}}{(t+|x_0-\xi|)^{2(n+\delta+\gamma)}},
\end{split}
\end{equation*}
where in the last inequality we have used the assumption $\lambda>3+{(2\delta+2\gamma)}/n$. This proves \eqref{hard2}. Furthermore, by the Minkowski inequality for integrals, we can see that the second term in \eqref{lastterm} is dominated by
\begin{equation*}
\begin{split}
&\int_{\mathbb R^n\setminus8\mathcal{B}}(2r)^{\gamma}\big|f(\xi)-f_{8\mathcal{B}}\big|\,d\xi
\bigg\{\int_{r}^{\infty}\int_{\mathbb R^n}\left(\frac{t}{t+|z|}\right)^{\lambda n}
\frac{t^{2\delta}}{(t+|x+z-\xi|)^{2(n+\delta+\gamma)}}\frac{dzdt}{t^{n+1}}\bigg\}^{1/2}\\
&\lesssim\int_{\mathbb R^n\setminus8\mathcal{B}}(2r)^{\gamma}\big|f(\xi)-f_{8\mathcal{B}}\big|\,d\xi
\bigg\{\int_{r}^{\infty}\frac{t^{2\delta-1}}{(t+|x_0-\xi|)^{2(n+\delta+\gamma)}}\,dt\bigg\}^{1/2}\\
&\lesssim\int_{\mathbb R^n\setminus8\mathcal{B}}\frac{(2r)^{\gamma}}{(|x_0-\xi|)^{n+\gamma}}\big|f(\xi)-f_{8\mathcal{B}}\big|\,d\xi.
\end{split}
\end{equation*}
From part (1) and part (2) of Lemma \ref{wanglemma1}, it then follows that for any $x,y\in \mathcal{B}$,
\begin{equation*}
\begin{split}
&\Big|\big[\mathcal{G}^{\ast}_{\lambda,\infty}({f})(x)\big]-\big[\mathcal{G}^{\ast}_{\lambda,\infty}({f})(y)\big]\Big|
\lesssim\frac{1}{m(8\mathcal{B})}\int_{8\mathcal{B}}\big|f(\xi)-f_{8\mathcal{B}}\big|\,d\xi\\
&+\sum_{j=3}^{\infty}\int_{2^{j+1}\mathcal{B}\setminus2^j\mathcal{B}}
\frac{(2r)^{\gamma}}{(|x_0-\xi|)^{n+\gamma}}\big|f(\xi)-f_{8\mathcal{B}}\big|\,d\xi\\
&\lesssim\frac{1}{m(8\mathcal{B})}\int_{8\mathcal{B}}\big|f(\xi)-f_{8\mathcal{B}}\big|\,d\xi
+\sum_{j=3}^{\infty}\bigg\{\frac{(2r)^{\gamma}}{(2^jr)^{\gamma}}\cdot\frac{1}{m(2^{j+1}\mathcal{B})}
\int_{2^{j+1}\mathcal{B}}\big|f(\xi)-f_{8\mathcal{B}}\big|\,d\xi\bigg\}\\
&\lesssim\big\|f\big\|_{\mathrm{BMO}(\omega)}\underset{\xi\in 8\mathcal{B}}{\mathrm{ess\,inf}}\,\omega(\xi)
+\sum_{j=3}^{\infty}\frac{j}{2^{(j-1)\gamma}}
\times\big\|f\big\|_{\mathrm{BMO}(\omega)}\underset{\xi\in8\mathcal{B}}{\mathrm{ess\,inf}}\,\omega(\xi)\\
&\lesssim\big\|f\big\|_{\mathrm{BMO}(\omega)}\underset{x\in8\mathcal{B}}{\mathrm{ess\,inf}}\,\omega(x).
\end{split}
\end{equation*}
Therefore, by inequality \eqref{omegawh} again, we conclude that
\begin{equation*}
\begin{split}
\mathcal{K}_{\infty}&=\frac{1}{\omega(\mathcal{B})}\int_{\mathcal{B}}\underset{y\in\mathcal{B}}{\mathrm{ess\,sup}}
\Big|\big[\mathcal{G}^{\ast}_{\lambda,\infty}({f})(x)\big]-\big[\mathcal{G}^{\ast}_{\lambda,\infty}({f})(y)\big]\Big|\,dx\\
&\lesssim\big\|f\big\|_{\mathrm{BMO}(\omega)}
\bigg\{\frac{m(\mathcal{B})}{\omega(\mathcal{B})}\cdot\underset{x\in8\mathcal{B}}{\mathrm{ess\,inf}}\,\omega(x)\bigg\}\\
&\leq\big\|f\big\|_{\mathrm{BMO}(\omega)}
\bigg\{\frac{m(\mathcal{B})}{\omega(\mathcal{B})}\cdot\underset{x\in\mathcal{B}}{\mathrm{ess\,inf}}\,\omega(x)\bigg\}
\leq\big\|f\big\|_{\mathrm{BMO}(\omega)}.
\end{split}
\end{equation*}
Combining the above estimates for both terms $\mathcal{K}_0$ and $\mathcal{K}_{\infty}$ leads to the desired inequality \eqref{mainesti3}. This concludes the proof of Theorem \ref{mainthm3}.
\end{proof}

We now introduce the function space $L^{\infty}(\omega)$ (the weighted version of $L^{\infty}(\mathbb R^n)$), which denotes the space of all essentially bounded measurable functions with respect to $\omega$ on $\mathbb R^n$. More precisely, we say that a measurable function $f$ belongs to the weighted space $L^{\infty}(\omega)$, if there is a constant $C>0$ such that for any ball $\mathcal{B}$ in $\mathbb R^n$,
\begin{equation*}
\bigg(\underset{x\in \mathcal{B}}{\mathrm{ess\,inf}}\,\omega(x)\bigg)^{-1}\Big(\big\|f\big\|_{L^{\infty}(\mathcal{B})}\Big)\leq C<\infty,
\end{equation*}
where
\begin{equation*}
\big\|f\big\|_{L^{\infty}(\mathcal{B})}:=\underset{x\in \mathcal{B}}{\mbox{ess\,sup}}\,|f(x)|
=\inf\Big\{M>0:m\big(\big\{x\in \mathcal{B}:|f(x)|>M\big\}\big)=0\Big\}.
\end{equation*}
We define
\begin{equation*}
\big\|f\big\|_{L^{\infty}(\omega)}:=\sup_{\mathcal{B}\subset\mathbb R^n}
\bigg(\underset{x\in \mathcal{B}}{\mathrm{ess\,inf}}\,\omega(x)\bigg)^{-1}\cdot
\Big(\big\|f\big\|_{L^{\infty}(\mathcal{B})}\Big).
\end{equation*}
In particular, when $\omega(x)=1$, it is easy to see that
\begin{equation*}
\big\|f\big\|_{L^{\infty}(\omega)}=\sup_{\mathcal{B}\subset\mathbb R^n}\big\|f\big\|_{L^{\infty}(\mathcal{B})}
=\underset{x\in \mathbb R^n}{\mbox{ess\,sup}}\,|f(x)|=\big\|f\big\|_{L^\infty}.
\end{equation*}
For any ball $\mathcal{B}$ in $\mathbb R^n$, we have
\begin{equation*}
\begin{split}
\frac{1}{\omega(\mathcal{B})}\int_{\mathcal{B}}\big|f(x)-f_{\mathcal{B}}\big|\,dx
&\leq \frac{2}{\omega(\mathcal{B})}\int_{\mathcal{B}}|f(x)|\,dx\\
&\leq 2\frac{m(\mathcal{B})}{\omega(\mathcal{B})}\big\|f\big\|_{L^{\infty}(\mathcal{B})}\\
&\leq 2\big\|f\big\|_{L^{\infty}(\omega)},
\end{split}
\end{equation*}
and hence
\begin{equation*}
L^{\infty}(\omega)\subset \mathrm{BMO}(\omega)\quad \& \quad \big\|f\big\|_{\mathrm{BMO}(\omega)}\leq2\big\|f\big\|_{L^{\infty}(\omega)}.
\end{equation*}

By using similar arguments, the following results can be also derived from Theorems \ref{thmgfunction}, \ref{thmsfunction} and \ref{thmlafunction}, which are the weighted version of Leckband's result in \cite{leck}.
\begin{thm}
For any ${f}\in L^{\infty}(\omega)$ and $\omega\in A_1$, then $\mathcal{G}({f})$(or $\mathcal{S}({f})$) is finite everywhere, and there exists a positive constant $C>0$, independent of $f$, such that
\begin{equation*}
\big\|\mathcal{G}({f})\big\|_{\mathrm{BLO}(\omega)}\leq C\big\|f\big\|_{L^{\infty}(\omega)}
\quad \mbox{and} \quad \big\|\mathcal{S}({f})\big\|_{\mathrm{BLO}(\omega)}\leq C\big\|f\big\|_{L^{\infty}(\omega)}.
\end{equation*}
\end{thm}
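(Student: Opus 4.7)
The plan is to exploit the embedding $L^{\infty}(\omega)\subset \mathrm{BMO}(\omega)$ with $\|f\|_{\mathrm{BMO}(\omega)}\leq 2\|f\|_{L^{\infty}(\omega)}$ established immediately above the theorem. Once it is verified that $\mathcal{G}(f)(x_0)$ and $\mathcal{S}(f)(x_0)$ are finite at a single point (equivalently, almost everywhere, by the dichotomy in the preceding theorems), Theorems \ref{mainthm1} and \ref{mainthm2} immediately yield $\|\mathcal{G}(f)\|_{\mathrm{BLO}(\omega)}\leq C\|f\|_{\mathrm{BMO}(\omega)}\leq 2C\|f\|_{L^{\infty}(\omega)}$ and the analogous bound for $\mathcal{S}$. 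Thus the only substantive task is to check pointwise finiteness; the stronger conclusion that this holds \emph{everywhere} (not merely a.e.) is then obtained by running the same estimate at every $x_0\in\mathbb{R}^n$.

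For a fixed $x_0\in\mathbb{R}^n$, I would follow the template of the proof of Theorem \ref{mainthm1}: fix $\mathcal{B}=B(x_0,1)$, write $\mathcal{G}(f)(x_0)\leq \mathcal{G}_0(f)(x_0)+\mathcal{G}_\infty(f)(x_0)$ with threshold $r=1$, and decompose $f=f^1+f^2+f^3$ relative to $2\mathcal{B}$. The vanishing condition kills $f^1$. For $f^2$, the bound
\begin{equation*}
\|f^2\|_{L^{\infty}(2\mathcal{B})}\leq 2\|f\|_{L^{\infty}(\omega)}\cdot\underset{z\in 2\mathcal{B}}{\mathrm{ess\,inf}}\,\omega(z)
\end{equation*}
places $f^2$ in $L^{2}(\omega^{-1})$, so Theorem \ref{thmgfunction} (together with $\omega^{-1}\in A_2$) controls the local contribution. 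For $f^3$ and for $\mathcal{G}_\infty(f)(x_0)$, I would replace the BMO-based annular bound of Lemma \ref{wanglemma1}(2) by the $L^{\infty}(\omega)$ estimate
\begin{equation*}
\int_{A_k}|f(z)|\,dz\leq m(A_k)\,\|f\|_{L^{\infty}(A_k)}\leq C\,2^{kn}\|f\|_{L^{\infty}(\omega)}\cdot\underset{z\in B(x_0,2^k)}{\mathrm{ess\,inf}}\,\omega(z)
\end{equation*}
on the annuli $A_k=B(x_0,2^{k+1})\setminus B(x_0,2^k)$. Since $\mathrm{ess\,inf}$ over larger balls is non-increasing, this infimum is dominated uniformly in $k$ by the finite quantity $\mathrm{ess\,inf}_{z\in\mathcal{B}}\omega(z)$. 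Substituting into the size-condition estimate $|\psi_t(x_0-z)|\lesssim t^{\delta}/(t+|x_0-z|)^{n+\delta}$ and summing the geometric series against $2^{-k(n+\delta)}$, together with the $\int_0^1 t^{2\delta-1}\,dt$ and $\int_1^\infty t^{-2n-1}\,dt$ integrals, produces a finite value $\mathcal{G}(f)(x_0)<+\infty$. The case of $\mathcal{S}(f)(x_0)$ is handled identically by inserting the extra cone integration exactly as in the proof of Theorem \ref{mainthm2}.

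The main technical obstacle is the bookkeeping that separates the use of cancellation from the use of size: for small $t$ one must invoke the vanishing moment (subtracting $f_{2\mathcal{B}}$) to keep $\int_0^1 |\psi_t\ast f(x_0)|^2\,dt/t$ convergent, whereas for large $t$ the raw size condition combined with the annular $L^{\infty}(\omega)$ bound supplies the required decay. No additional ingredients beyond those already assembled in Sections 4 and 5 are needed; once pointwise finiteness is in hand, Theorems \ref{mainthm1} and \ref{mainthm2} together with the embedding $L^{\infty}(\omega)\subset\mathrm{BMO}(\omega)$ close the proof.
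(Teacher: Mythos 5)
Your reduction of the two displayed inequalities to Theorems \ref{mainthm1} and \ref{mainthm2} via the embedding $L^{\infty}(\omega)\subset\mathrm{BMO}(\omega)$, $\|f\|_{\mathrm{BMO}(\omega)}\leq 2\|f\|_{L^{\infty}(\omega)}$, is correct and is essentially the route the paper intends (the paper offers no detailed proof, only the remark that the result follows ``by similar arguments'' from the weighted $L^2$ theorems). The whole weight of the proof therefore falls on ruling out the ``infinite everywhere'' alternative in those theorems, i.e.\ on the finiteness claim, and that is where your argument breaks down.

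There are two problems. First, the local piece: knowing $\mathcal{G}(f^2)\in L^2(\omega^{-1})$ gives $\mathcal{G}_0(f^2)(x)<\infty$ only for almost every $x$, not at the prescribed centre $x_0$, so ``running the same estimate at every $x_0$'' cannot upgrade a.e.\ finiteness to finiteness everywhere. Second, and more seriously, the large-$t$ piece: your claim that ``the raw size condition combined with the annular $L^{\infty}(\omega)$ bound supplies the required decay'' is false. Since $f\in L^{\infty}(\omega)$ is globally essentially bounded by $M:=\|f\|_{L^{\infty}(\omega)}\,\mathrm{ess\,inf}_{\mathcal{B}}\,\omega$, summing the annular estimates only yields $\sum_{k}t^{\delta}2^{kn}/(t+2^{k})^{n+\delta}\approx 1$, hence $|\psi_t*f(x_0)|\lesssim M$ uniformly in $t$ with no decay, and $\int_{1}^{\infty}dt/t=\infty$. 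The vanishing moment is indispensable for $t>r$ as well, and even with it one only controls the \emph{difference} $\big|\mathcal{G}_{\infty}(f)(x)-\mathcal{G}_{\infty}(f)(y)\big|$, never the value $\mathcal{G}_{\infty}(f)(x_0)$ itself --- this is exactly why Theorems \ref{mainthm1}--\ref{mainthm3} are stated with the dichotomy. Indeed, for $n=1$, $\omega\equiv 1$, $f=\chi_{[0,\infty)}\in L^{\infty}(\omega)$ and any $\psi\in LP$ with $c_0:=\int_{-\infty}^{0}\psi(v)\,dv\neq 0$, one has $\psi_t*f(x)=\int_{-\infty}^{x/t}\psi(v)\,dv\to c_0$ as $t\to\infty$, so $\int_{1}^{\infty}|\psi_t*f(x)|^2\,dt/t=\infty$ at every $x$. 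Thus the step genuinely fails rather than being merely under-justified, and the finiteness assertion (a fortiori the ``everywhere'' version) cannot be obtained along the lines you propose without an additional hypothesis on $\psi$ or on $f$.
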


\begin{thm}
Suppose that $(\lambda-3)n>2\delta+2\gamma$. For any ${f}\in L^{\infty}(\omega)$ and $\omega\in A_1$, then $\mathcal{G}^{\ast}_{\lambda}({f})$ is finite everywhere, and there exists a positive constant $C>0$, independent of $f$, such that
\begin{equation*}
\big\|\mathcal{G}^{\ast}_{\lambda}({f})\big\|_{\mathrm{BLO}(\omega)}\leq C\big\|f\big\|_{L^{\infty}(\omega)}.
\end{equation*}
\end{thm}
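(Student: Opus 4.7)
The plan is to mirror the proof of Theorem \ref{mainthm3}, replacing the BMO mean-oscillation estimates by the direct pointwise bound $\|f\|_{L^\infty(B)} \le \|f\|_{L^\infty(\omega)}\cdot\underset{x\in B}{\mathrm{ess\,inf}}\,\omega(x)$ valid on every ball $B$. No mean subtraction is needed: we put $|f(\xi)|$ directly wherever $|f(\xi)-f_{c\mathcal{B}}|$ appeared in the BMO argument, so the vanishing condition \eqref{psi1} is not used for the local $L^2$ step.

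\textbf{Main steps.} Fix $\mathcal{B}=B(x_0,r)$ and split $\mathcal{G}^{\ast}_{\lambda}(f)\le\mathcal{G}^{\ast}_{\lambda,0}(f)+\mathcal{G}^{\ast}_{\lambda,\infty}(f)$ by cutting the $t$-integral at $r$. Applying \eqref{essinf} reduces the BLO integrand to $\mathcal{K}_0+\mathcal{K}_\infty$ exactly as in Theorem \ref{mainthm3}. To estimate $\mathcal{K}_0$, decompose $f=f\chi_{4\mathcal{B}}+f\chi_{(4\mathcal{B})^\complement}$. The local piece is bounded by Theorem \ref{thmlafunction} on $L^2(\omega^{-1})$ (via \eqref{simfact}) and Cauchy--Schwarz, starting from $\int_{4\mathcal{B}}|f|^2\omega^{-1}\le\|f\|_{L^\infty(\omega)}^2\bigl(\underset{x\in4\mathcal{B}}{\mathrm{ess\,inf}}\,\omega\bigr)^2\int_{4\mathcal{B}}\omega^{-1}$ and then invoking the $A_2$-condition on $\omega^{-1}$, the doubling property of Lemma \ref{lemmaA1}, and \eqref{omegawh}. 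The tail piece reuses the key integral estimate \eqref{hard1} and a dyadic annular decomposition, where the bound $\int_{2^{j+1}\mathcal{B}}|f(\xi)|\,d\xi\lesssim\|f\|_{L^\infty(\omega)}\bigl(\underset{\xi\in2^{j+1}\mathcal{B}}{\mathrm{ess\,inf}}\,\omega\bigr)m(2^{j+1}\mathcal{B})$ plays the role of Lemma \ref{wanglemma1}(2); the $A_1$-monotonicity of essential infima, the geometric decay $2^{-j\delta}$, and \eqref{omegawh} then close the estimate. The term $\mathcal{K}_\infty$ is handled by reusing verbatim the oscillation computation of Theorem \ref{mainthm3}, including the crucial bound \eqref{hard2} that consumes the hypothesis $\lambda>3+(2\delta+2\gamma)/n$, once again with $|f(\xi)|$ in place of $|f(\xi)-f_{8\mathcal{B}}|$.

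\textbf{Finiteness everywhere and the main obstacle.} For pointwise finiteness at an arbitrary $x_0\in\mathbb R^n$, take $\mathcal{B}=B(x_0,1)$: the local part $\mathcal{G}^{\ast}_{\lambda}(f\chi_{4\mathcal{B}})$ belongs to $L^2(\omega^{-1})$ by Theorem \ref{thmlafunction}, hence is finite on a set of positive measure inside $\mathcal{B}$, while the tail part $\mathcal{G}^{\ast}_{\lambda}(f\chi_{(4\mathcal{B})^\complement})(x_0)$ can be bounded at the single point $x_0$ by the same annular computation as above (which never requires excluding a null set). Combining these with the uniform oscillation bound for $\mathcal{G}^{\ast}_{\lambda,\infty}$ obtained in the $\mathcal{K}_\infty$ step forces $\mathcal{G}^{\ast}_{\lambda}(f)(x_0)<\infty$ for every $x_0$. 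The main technical obstacle is checking that all dyadic sums in $\mathcal{K}_0$, $\mathcal{K}_\infty$, and the pointwise finiteness argument converge simultaneously under the single hypothesis $\lambda>3+(2\delta+2\gamma)/n$; this hinges on the fact that the $A_1$-monotonicity $\underset{x\in2^{j+1}\mathcal{B}}{\mathrm{ess\,inf}}\,\omega\le\underset{x\in\mathcal{B}}{\mathrm{ess\,inf}}\,\omega$ provides the uniform-in-$j$ control that the $k$-factor of Lemma \ref{wanglemma1}(2) provided in the BMO setting. Once that is verified, the remainder of the proof is a direct transcription of Theorem \ref{mainthm3}.
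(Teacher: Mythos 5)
Your treatment of the $\mathrm{BLO}(\omega)$ bound itself is sound and is essentially the route the paper intends: the paper gives no separate proof of this theorem, asserting only that it can be ``derived by similar arguments'' from Theorems \ref{thmgfunction}, \ref{thmsfunction} and \ref{thmlafunction}, and your substitution of the pointwise bound $\|f\|_{L^{\infty}(\mathcal{B})}\leq\|f\|_{L^{\infty}(\omega)}\cdot\underset{x\in\mathcal{B}}{\mathrm{ess\,inf}}\,\omega(x)$ for Lemma \ref{wanglemma1} reproduces the estimates of $\mathcal{K}_0$ and $\mathcal{K}_{\infty}$ with room to spare (the factor $j$ from Lemma \ref{wanglemma1}(2) disappears, and the local $L^2(\omega^{-1})$ step closes via the $A_2$ condition exactly as you indicate). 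Note, however, that the norm inequality also follows in one line from the embedding $L^{\infty}(\omega)\subset\mathrm{BMO}(\omega)$ with $\|f\|_{\mathrm{BMO}(\omega)}\leq 2\|f\|_{L^{\infty}(\omega)}$ established in Section 6, combined with Theorem \ref{mainthm3}; the only genuinely new content of the present statement is the assertion that $\mathcal{G}^{\ast}_{\lambda}(f)$ is finite \emph{everywhere} rather than ``infinite everywhere or finite a.e.''

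That finiteness assertion is where your argument has a real gap. Your scheme controls three things: (i) $\mathcal{G}^{\ast}_{\lambda}(f\chi_{4\mathcal{B}})$ in $L^2(\omega^{-1})$, hence finite only \emph{almost everywhere}; (ii) the small-$t$ far part $\mathcal{G}^{\ast}_{\lambda,0}(f\chi_{(4\mathcal{B})^{\complement}})(x_0)$ pointwise, via \eqref{hard1}; (iii) the \emph{oscillation} of $\mathcal{G}^{\ast}_{\lambda,\infty}(f)$ over $\mathcal{B}$, via \eqref{hard2}. None of these yields finiteness of the small-$t$ local piece $\mathcal{G}^{\ast}_{\lambda,0}(f\chi_{4\mathcal{B}})$ \emph{at the prescribed point} $x_0$: the oscillation bound concerns only the large-$t$ part, so it cannot transfer the a.e.\ finiteness of (i) to $x_0$, and a crude size-condition estimate of $\mathcal{G}^{\ast}_{\lambda,0}(f\chi_{4\mathcal{B}})(x_0)$ produces the divergent integral $\int_0^r dt/t$. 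Moreover, even the transfer step for the large-$t$ part is incomplete: the bound on $\big|\mathcal{G}^{\ast}_{\lambda,\infty}(f)(x_0)-\mathcal{G}^{\ast}_{\lambda,\infty}(f)(y)\big|$ gives $\mathcal{G}^{\ast}_{\lambda,\infty}(f)(x_0)<\infty$ only if one first exhibits a point $y$ with $\mathcal{G}^{\ast}_{\lambda,\infty}(f)(y)<\infty$, and your proposal never produces such a point --- the far large-$t$ integral $\int_r^{\infty}|\psi_t*f|^2\,dt/t$ admits no pointwise bound from the size condition alone (it gives $\int_r^{\infty}dt/t=\infty$), since $\psi_t*f$ need not decay as $t\to\infty$ for a general bounded $f$ and a general $\psi$ satisfying only \eqref{psi1}--\eqref{psi3}. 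What your argument actually proves is the same dichotomy as in Theorem \ref{mainthm3} together with the $\mathrm{BLO}(\omega)$ bound in the finite case; upgrading to ``finite everywhere'' requires an additional idea (or an additional hypothesis on $\psi$) that is not present in the proposal.
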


\section{John--Nirenberg-type inequalities for weighted BLO and BMO spaces}

In this section, we are concerned with the John--Nirenberg-type inequality with precise constants suitable for the $\mathrm{BLO}(\omega)$ spaces and relevant properties. Before we proceed to prove our next theorems, we need the following two auxiliary lemmas, which can be found in \cite{duoand,grafakos} and \cite{li}. Here we give the proofs for the sake of completeness.

\begin{lem}\label{lem71}
Let $n\in\mathbb N$, $1\leq p<\infty$ and $\omega$ be a weight function on $\mathbb R^n$. Then for any measurable subset $E$ of $\mathbb R^n$(with finite or infinite Lebesgue measure), we have the following integration formula for $f\in L^p(\omega)$(the layer cake representation).
\begin{equation}\label{layer}
\int_{E}|f(x)|^p\omega(x)\,dx=\int_0^{\infty}p\lambda^{p-1}\omega\big(\big\{x\in E:|f(x)|>\lambda\big\}\big)\,d\lambda.
\end{equation}
\end{lem}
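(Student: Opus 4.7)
The plan is to prove the layer cake formula by a standard application of Tonelli's theorem, exploiting the fact that the integrand $|f(x)|^p \omega(x)$ is nonnegative so that no integrability hypothesis is needed to justify interchanging the order of integration.

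The starting point is the elementary identity
\begin{equation*}
|f(x)|^p = \int_0^{|f(x)|} p\lambda^{p-1}\,d\lambda = \int_0^{\infty} p\lambda^{p-1}\chi_{\{\lambda < |f(x)|\}}(\lambda)\,d\lambda,
\end{equation*}
valid for every $x \in \mathbb R^n$, which one obtains from the fundamental theorem of calculus applied to $\lambda \mapsto \lambda^p$. Multiplying both sides by $\omega(x)$ and integrating over $E$ yields
\begin{equation*}
\int_E |f(x)|^p\omega(x)\,dx = \int_E \bigg(\int_0^{\infty} p\lambda^{p-1}\chi_{\{\lambda < |f(x)|\}}(\lambda)\,d\lambda\bigg)\omega(x)\,dx.
\end{equation*}

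Next, I would apply Tonelli's theorem on the product space $E \times (0,\infty)$ equipped with the product measure $\omega(x)\,dx \otimes d\lambda$. Since the integrand $p\lambda^{p-1}\chi_{\{\lambda < |f(x)|\}}(\lambda)\omega(x)$ is measurable and nonnegative, the order of integration may be swapped without any further hypothesis, giving
\begin{equation*}
\int_E |f(x)|^p\omega(x)\,dx = \int_0^{\infty} p\lambda^{p-1}\bigg(\int_E \chi_{\{\lambda < |f(x)|\}}(\lambda)\omega(x)\,dx\bigg)d\lambda.
\end{equation*}
The inner integral is precisely $\omega(\{x \in E : |f(x)| > \lambda\})$, by the definition of the $\omega$-measure recalled in \eqref{A1property}, and substituting this expression yields \eqref{layer}.

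There is essentially no obstacle here; the only technical point worth mentioning is that the formula holds for any measurable $f$ (finite or infinite valued) without assuming $f \in L^p(\omega)$ a priori, since both sides belong to $[0,+\infty]$ and the computation above is valid in the extended real sense. The joint measurability of $(x,\lambda) \mapsto \chi_{\{\lambda < |f(x)|\}}$ follows from the measurability of $f$, which legitimizes the use of Tonelli's theorem.
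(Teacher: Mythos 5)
Your proof is correct and follows essentially the same route as the paper: write $|f(x)|^p=\int_0^{|f(x)|}p\lambda^{p-1}\,d\lambda$ and interchange the order of integration. The only (harmless) difference is that you invoke Tonelli explicitly and note that nonnegativity makes the interchange valid without any integrability hypothesis, whereas the paper simply cites ``the Fubini theorem''; your remark that the identity holds for any measurable $f$ in the extended real sense is a slight sharpening but does not change the argument.
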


\begin{proof}
Let $\omega$ be a weight function on $\mathbb R^n$. For $1\leq p<\infty$, we can deduce that
\begin{equation*}
\begin{split}
\int_{E}|f(x)|^p\omega(x)\,dx&=\int_{E}\bigg(\int_0^{|f(x)|}p\lambda^{p-1}\,d\lambda\bigg)\omega(x)\,dx\\
&=\int_{E}\bigg(\int_0^{\infty}p\lambda^{p-1}\chi_{[0,|f(x)|)}(\lambda)\,d\lambda\bigg)\omega(x)\,dx.
\end{split}
\end{equation*}
Changing the order of integration (the Fubini theorem) gives us that
\begin{equation*}
\begin{split}
\int_{E}|f(x)|^p\omega(x)\,dx&=\int_0^{\infty}p\lambda^{p-1}\bigg(\int_{E}\chi_{[0,|f(x)|)}(\lambda)\omega(x)\,dx\bigg)d\lambda\\
&=\int_0^{\infty}p\lambda^{p-1}\bigg(\int_{\{x\in E:|f(x)|>\lambda\}}\omega(x)\,dx\bigg)d\lambda\\
&=\int_0^{\infty}p\lambda^{p-1}\omega\big(\big\{x\in E:|f(x)|>\lambda\big\}\big)\,d\lambda.
\end{split}
\end{equation*}
This proves \eqref{layer}. Recall that for a measurable function $f$ defined on $E\subseteq\mathbb R^n$, the function $d_{f}(\cdot;\omega)$ defined on $[0,\infty)$ is called the $\omega$-distribution function of $f$, where
\begin{equation*}
d_{f}(\lambda;\omega):=\omega\big(\big\{x\in E:|f(x)|>\lambda\big\}\big).
\end{equation*}
By the formula \eqref{layer}, we can evaluate the $L^p$-norm (with respect to $\omega$) of a function $f$ in terms of the $\omega$-distribution function.
\end{proof}

\begin{lem}\label{comparelem}
Let $n\in\mathbb N$, $1\leq p<\infty$ and $\nu\in A_p$. Then for any given cube $\mathcal{Q}\subset\mathbb R^n$, there exists a positive number $\delta>0$ such that
\begin{equation}\label{comp}
\frac{\nu(E)}{\nu(\mathcal{Q})}\leq C^{*}\left(\frac{m(E)}{m(\mathcal{Q})}\right)^{\delta}
\end{equation}
holds for any measurable subset $E$ of $\mathcal{Q}$, where $C^{*}>0$ is a (universal) constant which does not depend on $E$ and $\mathcal{Q}$.
\end{lem}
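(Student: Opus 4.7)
My plan is to deduce Lemma \ref{comparelem} from the reverse H\"older inequality for $A_p$ weights, a classical companion to the $A_p$ condition which (at this point in the paper) can be cited from the same references \cite{duoand,grafakos} already invoked just before the statement. The key fact is: if $\nu\in A_p$ with $1\leq p<\infty$, then there exist constants $r>1$ and $C_0>0$, depending only on $n$, $p$ and $[\nu]_{A_p}$, such that for every cube $\mathcal{Q}\subset\mathbb R^n$,
\begin{equation*}
\Bigl(\frac{1}{m(\mathcal{Q})}\int_{\mathcal{Q}}\nu(x)^r\,dx\Bigr)^{1/r}
\leq C_0\cdot\frac{1}{m(\mathcal{Q})}\int_{\mathcal{Q}}\nu(x)\,dx.
\end{equation*}

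With this in hand, the proof is essentially a single application of H\"older's inequality. Given $E\subset\mathcal{Q}$, I write $\nu(E)=\int_{\mathcal{Q}}\nu(x)\chi_{E}(x)\,dx$ and apply H\"older with exponents $r$ and $r'=r/(r-1)$ to obtain
\begin{equation*}
\nu(E)\leq \Bigl(\int_{\mathcal{Q}}\nu(x)^r\,dx\Bigr)^{1/r}\cdot m(E)^{1/r'}.
\end{equation*}
Substituting the reverse H\"older bound and rearranging $m(\mathcal{Q})$ factors gives
\begin{equation*}
\nu(E)\leq C_0\cdot\frac{\nu(\mathcal{Q})}{m(\mathcal{Q})}\cdot m(\mathcal{Q})^{1/r}\cdot m(E)^{1-1/r}
=C_0\cdot\nu(\mathcal{Q})\cdot\Bigl(\frac{m(E)}{m(\mathcal{Q})}\Bigr)^{1-1/r}.
\end{equation*}
Dividing through by $\nu(\mathcal{Q})$ yields \eqref{comp} with $C^{*}=C_0$ and $\delta=1-1/r\in(0,1)$, which depends only on $n$, $p$ and $[\nu]_{A_p}$ (hence is independent of $E$ and $\mathcal{Q}$).

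The only nontrivial ingredient is the reverse H\"older inequality itself; the rest is a two-line H\"older computation. Since the paper is already citing \cite{duoand,grafakos} for exactly this circle of results, I would simply state the reverse H\"older inequality as a known fact from those references (rather than reproving it via the Calder\'on--Zygmund decomposition and good-$\lambda$ style iteration) and proceed with the H\"older step. One small bookkeeping point to double-check is that the exponent $r>1$ and the constant $C_0$ come out uniform over cubes (they do, since they depend only on $[\nu]_{A_p}$), so that the resulting $\delta$ and $C^{*}$ are indeed independent of $\mathcal{Q}$, as the statement requires.
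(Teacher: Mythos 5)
Your proof is correct and is essentially the same as the paper's: the paper also writes $\nu(E)=\int_{\mathcal{Q}}\chi_E\nu$, applies H\"older with the reverse H\"older exponent, and obtains $\delta=\varepsilon/(1+\varepsilon)$, which matches your $\delta=1-1/r$ under $r=1+\varepsilon$. The only cosmetic difference is that the paper invokes a sharp quantitative form of the reverse H\"older inequality (with $C^{*}=2$ and $\varepsilon=2^{-(2p+1+n)}[\nu]_{A_p}^{-1}$) so as to record explicit constants, whereas you cite the classical qualitative version.
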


\begin{proof}
Since $\nu\in A_p$ with $1\leq p<\infty$, we know that there exist constants $C$ and $\varepsilon>0$, depending only on $p$ and the $A_p$ constant of $\nu$, such that for any cube $\mathcal{Q}\subset\mathbb R^n$, the following reverse H\"older inequality holds (see \cite{duoand} and \cite{grafakos2}).
\begin{equation}\label{reverseh}
\bigg(\frac{1}{m(\mathcal{Q})}\int_{\mathcal{Q}}\nu(x)^{1+\varepsilon}\,dx\bigg)^{\frac{1}{1+\varepsilon}}
\leq C^{*}\bigg(\frac{1}{m(\mathcal{Q})}\int_{\mathcal{Q}}\nu(x)\,dx\bigg).
\end{equation}
Actually, there is a sharp version of \eqref{reverseh}. More precisely, by using the sharp reverse H\"older inequality for $A_p$ weights obtained recently in \cite[Lemma 2.3]{chung}, we have that
\begin{equation*}
C^{*}:=2\quad \& \quad \varepsilon:=\frac{1}{2^{2p+1+n}[\nu]_{A_p}}.
\end{equation*}
This result is due to P\'{e}rez (see also \cite[Lemma 3.26]{benyi}). Suppose that $E\subseteq Q$. By H\"older's inequality with exponent $1+\varepsilon$ and \eqref{reverseh}, we can deduce that
\begin{equation*}
\begin{split}
\nu(E)&=\int_{\mathcal{Q}}\chi_E(x)\cdot\nu(x)\,dx\\
&\leq\bigg(\int_{\mathcal{Q}}\chi_E(x)^{\frac{1+\varepsilon}{\varepsilon}}\,dx\bigg)^{\frac{\varepsilon}{1+\varepsilon}}
\bigg(\int_{\mathcal{Q}}\nu(x)^{1+\varepsilon}\,dx\bigg)^{\frac{1}{1+\varepsilon}}\\
&\leq 2m(E)^{\frac{\varepsilon}{1+\varepsilon}}
m(\mathcal{Q})^{\frac{1}{1+\varepsilon}}\bigg(\frac{1}{m(\mathcal{Q})}\int_{\mathcal{Q}}\nu(x)\,dx\bigg)\\
&=2\left(\frac{m(E)}{m(\mathcal{Q})}\right)^{\frac{\varepsilon}{1+\varepsilon}}\nu(\mathcal{Q}).
\end{split}
\end{equation*}
This gives \eqref{comp} with $\delta=\varepsilon/{(1+\varepsilon)}$ and $\varepsilon=\frac{1}{2^{2p+1+n}[\nu]_{A_p}}$.
\end{proof}

\begin{rem}
It was shown in \cite{hy1} and \cite{hy2} that one can get a sharper reverse H\"{o}lder exponent by using the Fujii--Wilson $A_{\infty}$-constant $[\omega]_{A_{\infty}}$, where $[\omega]_{A_{\infty}}$ is defined as ($M$ stands for the standard Hardy--Littlewood maximal operator)
\begin{equation*}
[\omega]_{A_{\infty}}:=\sup_{Q\subset\mathbb R^n}\frac{1}{\omega(Q)}\int_{Q}M(\omega\chi_{Q})(x)\,dx.
\end{equation*}
By using the results from \cite{hy1} and \cite{hy2}, the estimate \eqref{comp} of Lemma \ref{comparelem} can be further improved. Here we omit the details.
\end{rem}

\begin{lem}\label{expblo}
If $f\in \mathrm{BLO}(\omega)$ with $\omega\in A_1$, then there exist two positive constants $C_1$ and $C_2$ such that for every cube $\mathcal{Q}=Q(x_0,r)$ and every $\lambda>0$,
\begin{equation}\label{maincw}
\begin{split}
&m\Big(\Big\{x\in \mathcal{Q}:\Big[f(x)-\underset{y\in\mathcal{Q}}{\mathrm{ess\,inf}}\,f(y)\Big]>\lambda\Big\}\Big)\\
&\leq {C}_1\cdot m(\mathcal{Q})\exp\bigg\{-\Big[[\omega]_{A_1}\underset{x\in\mathcal{Q}}{\mathrm{ess\,inf}}\,\omega(x)\Big]^{-1}
\frac{{C}_2\lambda}{\|f\|_{\mathrm{BLO}(\omega)}}\bigg\}.
\end{split}
\end{equation}
More specifically, we may choose
\begin{equation*}
{C}_1=e\quad and \quad {C}_2=\frac{1}{2^ne}.
\end{equation*}
\end{lem}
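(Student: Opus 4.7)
The plan is to follow the classical Calder\'on--Zygmund-plus-stopping-time proof of the John--Nirenberg inequality, adapted to the weighted BLO setting. Normalize by homogeneity so that $\|f\|_{\mathrm{BLO}(\omega)}=1$, fix a cube $\mathcal{Q}$, and write $m_{\mathcal{Q}}:=\underset{\mathcal{Q}}{\mathrm{ess\,inf}}\,f$ and $A:=[\omega]_{A_1}\underset{\mathcal{Q}}{\mathrm{ess\,inf}}\,\omega$. Exactly as in the derivation of Lemma~\ref{wanglemma1}(1), the BLO condition combined with the $A_1$ bound \eqref{A1property} yields the crucial starting estimate $\frac{1}{m(\mathcal{Q})}\int_{\mathcal{Q}}(f(x)-m_{\mathcal{Q}})\,dx\leq A$.

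For a free parameter $b>1$ (to be chosen at the end), I would perform a dyadic Calder\'on--Zygmund decomposition of the nonnegative function $f-m_{\mathcal{Q}}$ on $\mathcal{Q}$ at height $bA$. This produces maximal pairwise disjoint dyadic sub-cubes $\{Q_{1,j}\}_j$ satisfying the standard CZ properties: $bA<\frac{1}{m(Q_{1,j})}\int_{Q_{1,j}}(f-m_{\mathcal{Q}})\,dx\leq 2^{n}bA$, the bound $f-m_{\mathcal{Q}}\leq bA$ almost everywhere off $\bigcup_j Q_{1,j}$, and $\sum_j m(Q_{1,j})\leq m(\mathcal{Q})/b$. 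The BLO analogue of the BMO telescoping step $|f_{Q_{1,j}}-f_{\mathcal{Q}}|\leq 2^{n}b\|f\|_{*}$ is the observation that $m_{Q_{1,j}}\leq \frac{1}{m(Q_{1,j})}\int_{Q_{1,j}}f\,dx$, which together with the upper CZ bound gives the key increment inequality $0\leq m_{Q_{1,j}}-m_{\mathcal{Q}}\leq 2^{n}bA$.

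I would then iterate: on each $Q_{1,j}$ repeat the CZ construction with $f-m_{Q_{1,j}}$ in the role of the base nonnegative function. The average of $f-m_{Q_{1,j}}$ on $Q_{1,j}$ is controlled both by the BLO condition on $Q_{1,j}$ and, more crucially, by the inherited parent CZ bound together with the monotonicity $m_{Q_{1,j}}\geq m_{\mathcal{Q}}$. After $k$ iterations this produces a $k$-th generation of sub-cubes with total Lebesgue measure at most $b^{-k}m(\mathcal{Q})$, outside which a telescoping of the per-step increments gives $f-m_{\mathcal{Q}}\leq k\cdot 2^{n}bA$. Given $\lambda>0$, choosing $k=\lfloor \lambda/(2^{n}bA)\rfloor$ places $\{x\in\mathcal{Q}:f(x)-m_{\mathcal{Q}}>\lambda\}$ inside the $k$-th generation and yields
$$m\bigl(\{x\in\mathcal{Q}:f(x)-m_{\mathcal{Q}}>\lambda\}\bigr)\leq b\cdot \exp\bigl(-(\log b)\lambda/(2^{n}bA)\bigr)\,m(\mathcal{Q}).$$
Finally, maximize the rate $(\log b)/b$ over $b>1$; the maximum is attained at $b=e$ with value $1/e$, producing exactly the sharp constants $C_1=e$ and $C_2=1/(2^{n}e)$ stated in the lemma.

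The main technical obstacle is the iteration: one has to verify that at each generation the Calder\'on--Zygmund procedure can be calibrated so that the ess-inf increments stay uniformly bounded by $2^{n}bA$, rather than being inflated by an extra $[\omega]_{A_1}$-factor each time one passes to a smaller cube. The remedy is to work with the Lebesgue-average of $f-m_{Q_{k-1}}$ inherited from the parent cube's CZ bound, rather than invoking the local BLO bound on $Q_k$ directly (which would involve $\underset{Q_k}{\mathrm{ess\,inf}}\,\omega$, potentially larger than $\underset{\mathcal{Q}}{\mathrm{ess\,inf}}\,\omega$ and hence spoiling the clean exponent claimed in the lemma).
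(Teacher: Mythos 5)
Your overall strategy --- normalize $\|f\|_{\mathrm{BLO}(\omega)}=1$, run a Calder\'on--Zygmund stopping-time decomposition of $f-\mathrm{ess\,inf}_{\mathcal{Q}}f$ at height $b\,\mathcal{A}_{\omega}$ with $\mathcal{A}_{\omega}=[\omega]_{A_1}\,\mathrm{ess\,inf}_{\mathcal{Q}}\,\omega$, iterate, telescope the ess-inf increments, and optimize $b=e$ --- is exactly the paper's proof (the paper's $\sigma$ is your $b$), and your first generation and final optimization are correct. The gap is in the remedy you propose for the iteration. From the parent CZ bound together with $m_{Q_{1,j}}\ge m_{\mathcal{Q}}$ you only obtain
\begin{equation*}
\frac{1}{m(Q_{1,j})}\int_{Q_{1,j}}\bigl(f-m_{Q_{1,j}}\bigr)\,dx
\le\frac{1}{m(Q_{1,j})}\int_{Q_{1,j}}\bigl(f-m_{\mathcal{Q}}\bigr)\,dx\le 2^{n}bA,
\end{equation*}
which sits \emph{above} the selection height $bA$, not below it. Restarting the stopping time at height $bA$ inside a cube whose own average already exceeds $bA$ destroys the two properties the argument lives on: the selected second-generation cubes need no longer have averages $\le 2^{n}bA$ (their dyadic parent may be $Q_{1,j}$ itself, whose average is only known to be $\le 2^{n}bA$, so you get $4^{n}bA$), and, fatally, the measure-decay estimate degenerates to $\sum_{i}m(Q_{2,i})\le (bA)^{-1}\int_{Q_{1,j}}(f-m_{Q_{1,j}})\,dx\le 2^{n}m(Q_{1,j})$, which is vacuous. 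Without the geometric decay $\sum_{\ell}m(Q^{(k)}_{\ell})\le b^{-k}m(\mathcal{Q})$ there is no exponential bound at all, so this is not a loss of constants but a collapse of the mechanism.

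The paper does the opposite of what you propose: at every generation it re-invokes the BLO hypothesis on the new cube, namely $\frac{1}{\omega(Q^{(1)}_{j'})}\int_{Q^{(1)}_{j'}}\bigl(f-\mathrm{ess\,inf}_{Q^{(1)}_{j'}}f\bigr)\,dx\le\|f\|_{\mathrm{BLO}(\omega)}=1<\sigma$, precisely so that the starting average is reset below the threshold and the factor $1/\sigma$ in the measure decay survives at each step. You are right to worry that converting this weighted average to a Lebesgue average brings in $[\omega]_{A_1}\,\mathrm{ess\,inf}_{Q^{(1)}_{j'}}\,\omega$, which can exceed $\mathcal{A}_{\omega}$; this is a genuine delicacy, and the paper's properties (B)-$k$ and (D)-$k$ keep the constant pinned at $\mathcal{A}_{\omega}$ without spelling out why that is legitimate. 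But your substitute does not resolve the difficulty --- it trades a constant that needs justification for an iteration that provably fails to decay. Any repair must retain the fresh BLO bound at each generation and then track how the local quantities $[\omega]_{A_1}\,\mathrm{ess\,inf}_{Q^{(k)}_{j}}\,\omega$ interact with the fixed height.
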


\begin{proof}[Proof of Lemma $\ref{expblo}$]
Some ideas of the proof of this lemma come from \cite{duoand} and \cite{grafakos2}. The proof has five main steps.

\textbf{Step 1}. Without loss of generality, we may assume that $\|f\|_{\mathrm{BLO}(\omega)}=1$ with $\omega\in A_1$. Note that if $\lambda\leq[\omega]_{A_1}\underset{x\in\mathcal{Q}}{\mathrm{ess\,inf}}\,\omega(x)$, then the inequality \eqref{maincw} holds true by choosing ${C}_1=e$ and ${C}_2=1$. Now we suppose that $\lambda>[\omega]_{A_1}\underset{x\in\mathcal{Q}}{\mathrm{ess\,inf}}\,\omega(x)$. Then for each fixed cube $\mathcal{Q}=Q(x_0,r)$, we can apply the Calder\'{o}n--Zygmund decomposition to the function $f(x)-\underset{y\in\mathcal{Q}}{\mathrm{ess\,inf}}f(y)$ inside the cube $\mathcal{Q}$. Let $\sigma>1$ be a positive constant to be fixed below.
Since
\begin{equation*}
\bigg(\frac{1}{\omega(\mathcal{Q})}\int_{\mathcal{Q}}\Big[f(x)-\underset{y\in\mathcal{Q}}{\mathrm{ess\,inf}}\,f(y)\Big]\,dx\bigg)
\leq\|f\|_{\mathrm{BLO}(\omega)}=1<\sigma,
\end{equation*}
from the condition $\omega\in A_1$, it then follows that
\begin{equation*}
\begin{split}
&\bigg(\frac{1}{m(\mathcal{Q})}\int_{\mathcal{Q}}\Big[f(x)-\underset{y\in\mathcal{Q}}{\mathrm{ess\,inf}}\,f(y)\Big]\,dx\bigg)\\
&=\frac{\omega(\mathcal{Q})}{m(\mathcal{Q})}\cdot
\bigg(\frac{1}{\omega(\mathcal{Q})}\int_{\mathcal{Q}}\Big[f(x)-\underset{y\in\mathcal{Q}}{\mathrm{ess\,inf}}\,f(y)\Big]\,dx\bigg)
<[\omega]_{A_1}\underset{x\in\mathcal{Q}}{\mathrm{ess\,inf}}\,\omega(x)\cdot\sigma.
\end{split}
\end{equation*}
We set $\mathcal{A}_{\omega}=[\omega]_{A_1}\underset{x\in\mathcal{Q}}{\mathrm{ess\,inf}}\,\omega(x)$ and then follow the same argument (the so-called stopping time argument) as in the proof of \cite[Theorem 7.1.6]{grafakos2} to obtain a collection of (pairwise disjoint) cubes $\big\{Q^{(1)}_j\big\}_j$ satisfying the following properties:
\begin{equation*}
\begin{split}
&(A)\mbox{-1}. ~~\mbox{The interior of every cube}~ Q^{(1)}_j ~\mbox{is contained in}~ \mathcal{Q};\\
&(B)\mbox{-1}. ~~\mathcal{A}_{\omega}\cdot\sigma
<\frac{1}{m(Q^{(1)}_j)}\int_{Q^{(1)}_j}\Big[f(x)-\underset{y\in\mathcal{Q}}{\mathrm{ess\,inf}}\,f(y)\Big]\,dx
\leq \mathcal{A}_{\omega}\cdot2^n\sigma;\\
&(C)\mbox{-1}. ~~0\leq\underset{y\in{Q^{(1)}_j}}{\mathrm{ess\,inf}}\,f(y)-\underset{y\in\mathcal{Q}}{\mathrm{ess\,inf}}\,f(y)
\leq \mathcal{A}_{\omega}\cdot2^n\sigma;\\
&(D)\mbox{-1}. ~~\sum_{j}m\big(Q^{(1)}_j\big)\leq\frac{m(\mathcal{Q})}{\sigma};\\
&(E)\mbox{-1}. ~~f(x)-\underset{y\in\mathcal{Q}}{\mathrm{ess\,inf}}\,f(y)
\leq \mathcal{A}_{\omega}\cdot\sigma,~~a.e.~x\in \mathcal{Q}\setminus\bigcup_jQ^{(1)}_j.
\end{split}
\end{equation*}
We prove these properties $(A)$-1 through $(E)$-1. Obviously, properties $(A)$-1 and $(B)$-1 hold by the selection criterion of the cubes $Q^{(1)}_j$(can be viewed as the first generation of $\mathcal{Q}$). Since $Q^{(1)}_j\subset \mathcal{Q}$ and
\begin{equation*}
\begin{split}
\underset{y\in{Q^{(1)}_j}}{\mathrm{ess\,inf}}\,f(y)
&=\frac{1}{m(Q^{(1)}_j)}\int_{Q^{(1)}_j}\underset{y\in{Q^{(1)}_j}}{\mathrm{ess\,inf}}\,f(y)\,dx\\
&\leq\frac{1}{m(Q^{(1)}_j)}\int_{Q^{(1)}_j}f(x)\,dx=f_{Q^{(1)}_j},
\end{split}
\end{equation*}
we get
\begin{equation*}
\begin{split}
0&\leq\underset{y\in{Q^{(1)}_j}}{\mathrm{ess\,inf}}\,f(y)-\underset{y\in\mathcal{Q}}{\mathrm{ess\,inf}}\,f(y)\\
&\leq\frac{1}{m(Q^{(1)}_j)}\int_{Q^{(1)}_j}f(x)\,dx-\underset{y\in\mathcal{Q}}{\mathrm{ess\,inf}}\,f(y)\\
&=\frac{1}{m(Q^{(1)}_j)}\int_{Q^{(1)}_j}\Big[f(x)-\underset{y\in\mathcal{Q}}{\mathrm{ess\,inf}}\,f(y)\Big]\,dx\\
&\leq \mathcal{A}_{\omega}\cdot 2^n\sigma,
\end{split}
\end{equation*}
where in the last inequality we have used $(B)$-1. Because the cubes $Q^{(1)}_j$ are pairwise disjoint, then it follows from $(B)$-1 that
\begin{equation*}
\begin{split}
\mathcal{A}_{\omega}\cdot\sum_{j}m\big(Q^{(1)}_j\big)&<\frac{1}{\sigma}
\sum_{j}\int_{Q^{(1)}_j}\Big[f(x)-\underset{y\in\mathcal{Q}}{\mathrm{ess\,inf}}\,f(y)\Big]\,dx\\
&=\frac{1}{\sigma}\int_{\bigcup_jQ^{(1)}_j}\Big[f(x)-\underset{y\in\mathcal{Q}}{\mathrm{ess\,inf}}\,f(y)\Big]\,dx\\
&\leq\frac{1}{\sigma}\int_{\mathcal{Q}}\Big[f(x)-\underset{y\in\mathcal{Q}}{\mathrm{ess\,inf}}\,f(y)\Big]\,dx\\
&\leq\frac{m(\mathcal{Q})}{\sigma}\cdot\mathcal{A}_{\omega}.
\end{split}
\end{equation*}
This is equivalent to $(D)$-1. $(E)$-1 is a consequence of the Lebesgue differentiation theorem.

\textbf{Step 2}. We now fix a selected cube $Q^{(1)}_{j'}$(first generation) and apply the same Calder\'{o}n--Zygmund decomposition to the function $f(x)-\underset{y\in{Q}^{(1)}_{j'}}{\mathrm{ess\,inf}}f(y)$ inside the cube $Q^{(1)}_{j'}$. Also repeat this process for any other cube of the first generation. Let $Q^{(1)}_{j'}=Q(x_1,r_1)$ be the cube centered at $x_1$ and with side length $r_1$. Observe that
\begin{equation*}
\bigg(\frac{1}{\omega(Q^{(1)}_{j'})}\int_{Q^{(1)}_{j'}}\Big[f(x)-\underset{y\in Q^{(1)}_{j'}}{\mathrm{ess\,inf}}\,f(y)\Big]\,dx\bigg)
\leq\|f\|_{\mathrm{BLO}(\omega)}=1<\sigma.
\end{equation*}
Arguing as in Step 1, we obtain a collection of (pairwise disjoint) cubes $\big\{Q^{(2)}_j\big\}_j$ satisfying the following properties:
\begin{equation*}
\begin{split}
&(A)\mbox{-2}. ~~\mbox{The interior of every cube}~ Q^{(2)}_j ~\mbox{is contained in a unique cube}~ {Q}^{(1)}_{j'};\\
&(B)\mbox{-2}. ~~\mathcal{A}_{\omega}\cdot\sigma
<\frac{1}{m(Q^{(2)}_j)}\int_{Q^{(2)}_j}\Big[f(x)-\underset{y\in{Q}^{(1)}_{j'}}{\mathrm{ess\,inf}}\,f(y)\Big]\,dx
\leq \mathcal{A}_{\omega}\cdot2^n\sigma;\\
&(C)\mbox{-2}. ~~0\leq\underset{y\in{Q}^{(2)}_{j}}{\mathrm{ess\,inf}}\,f(y)-\underset{y\in{Q^{(1)}_{j'}}}{\mathrm{ess\,inf}}\,f(y)
\leq \mathcal{A}_{\omega}\cdot2^n\sigma;\\
&(D)\mbox{-2}. ~~\sum_{j}m\big(Q^{(2)}_j\big)\leq\frac{1}{\sigma}\sum_{j'} m\big(Q^{(1)}_{j'}\big);\\
&(E)\mbox{-2}. ~~f(x)-\underset{y\in{Q}^{(1)}_{j'}}{\mathrm{ess\,inf}}\,f(y)
\leq\mathcal{A}_{\omega}\cdot\sigma,~~a.e.~x\in {Q}^{(1)}_{j'}\setminus\bigcup_jQ^{(2)}_j.
\end{split}
\end{equation*}
In fact, it is clear that properties $(A)$-2 and $(B)$-2 hold by the selection criterion of the cubes $Q^{(2)}_j$(can be viewed as the second generation of $\mathcal{Q}$). Since $Q^{(2)}_j\subset Q^{(1)}_{j'}$ and
\begin{equation*}
\begin{split}
\underset{y\in{Q^{(2)}_j}}{\mathrm{ess\,inf}}\,f(y)
&=\frac{1}{m(Q^{(2)}_j)}\int_{Q^{(2)}_j}\underset{y\in{Q^{(2)}_j}}{\mathrm{ess\,inf}}\,f(y)\,dx
\leq\frac{1}{m(Q^{(2)}_j)}\int_{Q^{(2)}_j}f(x)\,dx,
\end{split}
\end{equation*}
so we have
\begin{equation*}
\begin{split}
0\leq\underset{y\in{Q}^{(2)}_{j}}{\mathrm{ess\,inf}}\,f(y)-\underset{y\in{Q^{(1)}_{j'}}}{\mathrm{ess\,inf}}\,f(y)
&\leq\frac{1}{m(Q^{(2)}_j)}\int_{Q^{(2)}_j}\Big[f(x)-\underset{y\in{Q}^{(1)}_{j'}}{\mathrm{ess\,inf}}\,f(y)\Big]\,dx\\
&\leq \mathcal{A}_{\omega}\cdot2^n\sigma,
\end{split}
\end{equation*}
due to property $(B)$-2. By the Lebesgue differentiation theorem, $(E)$-2 holds. It remains only to study the last property $(D)$-2. Notice that the cubes $Q^{(2)}_j$ are also pairwise disjoint and each selected cube $Q^{(2)}_j$ is contained in a unique cube $Q^{(1)}_{j'}$, we can deduce that
\begin{equation*}
\begin{split}
\mathcal{A}_{\omega}\cdot\sum_{j}\big|Q^{(2)}_j\big|
&<\frac{1}{\sigma}\sum_{j}\int_{Q^{(2)}_j}\Big[f(x)-\underset{y\in{Q}^{(1)}_{j'}}{\mathrm{ess\,inf}}\,f(y)\Big]\,dx\\
&\leq\frac{1}{\sigma}\sum_{j'}\int_{Q^{(1)}_{j'}}\Big[f(x)-\underset{y\in{Q}^{(1)}_{j'}}{\mathrm{ess\,inf}}\,f(y)\Big]\,dx\\
&\leq\frac{1}{\sigma}\sum_{j'}m\big(Q^{(1)}_{j'}\big)
\cdot\frac{\omega(Q^{(1)}_{j'})}{m(Q^{(1)}_{j'})}\big\|f\big\|_{\mathrm{BLO}(\omega)}\\
&\leq\frac{1}{\sigma}\sum_{j'}m\big(Q^{(1)}_{j'}\big)\cdot\mathcal{A}_{\omega}.
\end{split}
\end{equation*}
This is just the desired estimate. Summarizing the estimates derived above($(E)$-2 and $(C)$-1), we can deduce that for almost every $x\in {Q}^{(1)}_{j'}\setminus\bigcup_jQ^{(2)}_j$,
\begin{equation*}
\begin{split}
f(x)-\underset{y\in\mathcal{Q}}{\mathrm{ess\,inf}}\,f(y)
&=f(x)-\underset{y\in{Q}^{(1)}_{j'}}{\mathrm{ess\,inf}}\,f(y)+\underset{y\in{Q}^{(1)}_{j'}}{\mathrm{ess\,inf}}\,f(y)
-\underset{y\in\mathcal{Q}}{\mathrm{ess\,inf}}\,f(y)\\
&\leq\sigma\cdot\mathcal{A}_{\omega}+2^n\sigma\cdot\mathcal{A}_{\omega},~~a.e.~x\in {Q}^{(1)}_{j'}\setminus\bigcup_jQ^{(2)}_j.
\end{split}
\end{equation*}
This estimate, together with $(E)$-1, implies that for almost every $x\in {\mathcal{Q}}\setminus\bigcup_jQ^{(2)}_j$,
\begin{equation*}
\begin{split}
f(x)-\underset{y\in\mathcal{Q}}{\mathrm{ess\,inf}}\,f(y)
&\leq \sigma\cdot\mathcal{A}_{\omega}+2^n\sigma\cdot\mathcal{A}_{\omega}\\
&\leq2\sigma\cdot 2^{n}\mathcal{A}_{\omega}.
\end{split}
\end{equation*}
Moreover, from $(D)$-1 and $(D)$-2, we conclude that
\begin{equation*}
\sum_{j}m\big(Q^{(2)}_j\big)\leq\frac{1}{\sigma}\sum_{j'}m\big(Q^{(1)}_{j'}\big)\leq\frac{m(\mathcal{Q})}{\sigma^2}.
\end{equation*}

\textbf{Step 3}. We repeat this process indefinitely to obtain a collection of cubes $\big\{Q^{(k)}_j\big\}_j$ satisfying the following properties:
\begin{equation*}
\begin{split}
&(A)\mbox{-k}. ~~\mbox{The interior of every cube}~ Q^{(k)}_j ~\mbox{is contained in a unique cube}~ {Q}^{(k-1)}_{j'};\\
&(B)\mbox{-k}. ~~\mathcal{A}_{\omega}\cdot\sigma
<\frac{1}{m(Q^{(k)}_j)}\int_{Q^{(k)}_j}\Big[f(x)-\underset{y\in{Q}^{(k-1)}_{j'}}{\mathrm{ess\,inf}}\,f(y)\Big]\,dx
\leq \mathcal{A}_{\omega}\cdot2^n\sigma;\\
&(C)\mbox{-k}. ~~0\leq\underset{y\in{Q}^{(k)}_{j}}{\mathrm{ess\,inf}}\,f(y)-\underset{y\in{Q^{(k-1)}_{j'}}}{\mathrm{ess\,inf}}\,f(y)
\leq \mathcal{A}_{\omega}\cdot2^n\sigma;\\
&(D)\mbox{-k}. ~~\sum_{j}m\big(Q^{(k)}_j\big)\leq\frac{1}{\sigma}\sum_{j'}m\big(Q^{(k-1)}_{j'}\big);\\
&(E)\mbox{-k}. ~~f(x)-\underset{y\in{Q}^{(k-1)}_{j'}}{\mathrm{ess\,inf}}\,f(y)
\leq\mathcal{A}_{\omega}\cdot\sigma,~~a.e.~x\in {Q}^{(k-1)}_{j'}\setminus\bigcup_jQ^{(k)}_j.
\end{split}
\end{equation*}
Here ${Q}^{(k-1)}_{j'}$ denotes the cube centered at $x_{k-1}$ with side length $r_{k-1}$. By induction, from the previous proof, it actually follows that
\begin{equation*}
f(x)-\underset{y\in\mathcal{Q}}{\mathrm{ess\,inf}}\,f(y)
\leq k\sigma\cdot 2^n\mathcal{A}_{\omega},~~a.e.~x\in \mathcal{Q}\setminus\bigcup_{\ell}Q^{(k)}_{\ell},
\end{equation*}
and
\begin{equation}\label{www}
\sum_{\ell}m\big(Q^{(k)}_{\ell}\big)\leq\frac{m(\mathcal{Q})}{\sigma^{k}},\quad k=1,2,3,\dots.
\end{equation}
Therefore
\begin{equation}\label{hhh}
\Big\{x\in \mathcal{Q}:\Big[f(x)-\underset{y\in\mathcal{Q}}{\mathrm{ess\,inf}}\,f(y)\Big]>k\sigma \cdot2^n\mathcal{A}_{\omega}\Big\}
\subseteq\bigcup_{\ell}Q^{(k)}_{\ell},k=1,2,3,\dots.
\end{equation}

\textbf{Step 4}. Since
\begin{equation*}
(0,+\infty)=\bigcup_{k=0}^{\infty}
\bigg(k\sigma2^n\mathcal{A}_{\omega},(k+1)\sigma2^n\mathcal{A}_{\omega}\bigg],
\end{equation*}
then for each fixed $\lambda\in(0,+\infty)$, we can write
\begin{equation*}
k\sigma2^n\mathcal{A}_{\omega}<\lambda\leq (k+1)\sigma2^n\mathcal{A}_{\omega}
\end{equation*}
for some $k\in \mathbb{N}\cup\{0\}$, and hence
\begin{equation*}
\begin{split}
&m\Big(\Big\{x\in \mathcal{Q}:\Big[f(x)-\underset{y\in\mathcal{Q}}{\mathrm{ess\,inf}}\,f(y)\Big]>\lambda\Big\}\Big)\\
&\leq m\Big(\Big\{x\in \mathcal{Q}:\Big[f(x)-\underset{y\in\mathcal{Q}}{\mathrm{ess\,inf}}\,f(y)\Big]>k\sigma 2^n\mathcal{A}_{\omega}\Big\}\Big)\\
&\leq\sum_{\ell}m\big(Q_{\ell}^{(k)}\big)\leq\frac{m(\mathcal{Q})}{\sigma^k}\\
&=\sigma\cdot m(\mathcal{Q})\cdot\frac{\exp\{-k\log\sigma\}}{\sigma},
\end{split}
\end{equation*}
where in the last two inequalities we have used \eqref{www} and \eqref{hhh}, respectively. Now choose $\sigma=e>1$, we then have
\begin{equation*}
\begin{split}
&m\Big(\Big\{x\in \mathcal{Q}:\Big[f(x)-\underset{y\in\mathcal{Q}}{\mathrm{ess\,inf}}\,f(y)\Big]>\lambda\Big\}\Big)\\
&\leq e\cdot m(\mathcal{Q})\exp\big\{-(k+1)\big\}\\
&\leq e\cdot m(\mathcal{Q})\exp\Big\{-\frac{\lambda}{\mathcal{A}_{\omega}\cdot2^ne}\Big\}.
\end{split}
\end{equation*}
This concludes the proof of Lemma \ref{expblo} for the special case that $f\in\mathrm{BLO}(\omega)$ with $\|f\|_{\mathrm{BLO}(\omega)}=1$.

\textbf{Step 5}. We now proceed to the general case. In order to do so, we set
\begin{equation*}
\widetilde{f}(x):=\frac{f(x)}{\|f\|_{\mathrm{BLO}(\omega)}}.
\end{equation*}
By the definition of $\|\cdot\|_{\mathrm{BLO}(\omega)}$, we have
\begin{equation*}
\|\widetilde{f}\|_{\mathrm{BLO}(\omega)}=1\quad \&\quad
\underset{y\in\mathcal{Q}}{\mathrm{ess\,inf}}\,f(y)=\|f\|_{\mathrm{BLO}(\omega)}
\cdot\underset{y\in\mathcal{Q}}{\mathrm{ess\,inf}}\,\widetilde{f}(y).
\end{equation*}
Hence,
\begin{equation*}
\begin{split}
&m\Big(\Big\{x\in \mathcal{Q}:\Big[f(x)-\underset{y\in\mathcal{Q}}{\mathrm{ess\,inf}}\,f(y)\Big]>\lambda\Big\}\Big)\\
&=m\Big(\Big\{x\in \mathcal{Q}:\Big[\widetilde{f}(x)-\underset{y\in\mathcal{Q}}{\mathrm{ess\,inf}}\,\widetilde{f}(y)\Big]>
\frac{\lambda}{\|f\|_{\mathrm{BLO}(\omega)}}\Big\}\Big)\\
&\leq {C}_1 m(\mathcal{Q})\exp\bigg\{-\Big[[\omega]_{A_1}\underset{x\in\mathcal{Q}}{\mathrm{ess\,inf}}\,\omega(x)\Big]^{-1}
\frac{{C}_2\lambda}{\|f\|_{\mathrm{BLO}(\omega)}}\bigg\},
\end{split}
\end{equation*}
with precise constants
\begin{equation*}
{C}_1=e\quad \& \quad {C}_2=\frac{1}{2^ne}.
\end{equation*}
We are done.
\end{proof}

Based on Lemma \ref{expblo}, we can prove the following result.

\begin{thm}\label{weightedblomain}
For any $1\leq p<\infty$ and $\omega\in A_1$, define
\begin{equation*}
\big\|f\big\|_{\mathrm{BLO}^p(\omega)}:=\sup_{\mathcal{Q}\subset\mathbb R^n}
\bigg(\frac{1}{\omega(\mathcal{Q})}\int_{\mathcal{Q}}
\Big[f(x)-\underset{y\in\mathcal{Q}}{\mathrm{ess\,inf}}\,f(y)\Big]^p\omega(x)^{1-p}dx\bigg)^{1/p}.
\end{equation*}
When $p=1$, we simply write $\|\cdot\|_{\mathrm{BLO}^p(\omega)}=\|\cdot\|_{\mathrm{BLO}(\omega)}$. Then we have
\begin{equation*}
\big\|f\big\|_{\mathrm{BLO}^p(\omega)}\approx\big\|f\big\|_{\mathrm{BLO}(\omega)},
\end{equation*}
for each $1<p<\infty$. To be more precise,
\begin{equation*}
\big\|f\big\|_{\mathrm{BLO}(\omega)}\leq\big\|f\big\|_{\mathrm{BLO}^p(\omega)}
\leq[\omega]_{A_1}\Big(C^{*}\cdot p\Gamma(p)\Big)^{1/p}
\cdot\frac{{C}_1^{\delta/p}}{{C}_2\delta}\big\|f\big\|_{\mathrm{BLO}(\omega)},
\end{equation*}
where $C^{*}$ and $\delta$ are the constants appearing in Lemma $\ref{comparelem}$, and $\Gamma(\cdot)$ denotes the usual gamma function, which is given by
\begin{equation*}
\Gamma(x)=\int_0^{\infty}\mu^{x-1}e^{-\mu}\,d\mu.
\end{equation*}
\end{thm}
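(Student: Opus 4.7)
The plan is to prove the two inequalities separately. The lower bound $\|f\|_{\mathrm{BLO}(\omega)}\leq\|f\|_{\mathrm{BLO}^{p}(\omega)}$ follows from a direct application of H\"older's inequality: for any cube $\mathcal{Q}$ I would split the integrand $f(x)-\underset{y\in\mathcal{Q}}{\mathrm{ess\,inf}}\,f(y)$ as the product of itself with $\omega(x)^{(1-p)/p}$ and $\omega(x)^{(p-1)/p}$, apply H\"older with exponents $p$ and $p'=p/(p-1)$, and use $\int_{\mathcal{Q}}\omega\,dx=\omega(\mathcal{Q})$ to obtain
\begin{equation*}
\frac{1}{\omega(\mathcal{Q})}\int_{\mathcal{Q}}\Big[f(x)-\underset{y\in\mathcal{Q}}{\mathrm{ess\,inf}}\,f(y)\Big]dx\leq\bigg(\frac{1}{\omega(\mathcal{Q})}\int_{\mathcal{Q}}\Big[f(x)-\underset{y\in\mathcal{Q}}{\mathrm{ess\,inf}}\,f(y)\Big]^{p}\omega(x)^{1-p}dx\bigg)^{1/p}.
\end{equation*}
Taking the supremum over cubes then yields $\|f\|_{\mathrm{BLO}(\omega)}\leq\|f\|_{\mathrm{BLO}^{p}(\omega)}$.

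For the upper bound, my strategy is to combine the John--Nirenberg estimate of Lemma \ref{expblo} with the layer cake formula of Lemma \ref{lem71} measured against $d\nu(x):=\omega(x)^{1-p}dx$. A preliminary step is to observe that $\nu\in A_{p}$ whenever $\omega\in A_{1}$: since $(\omega^{1-p})^{1-p'}=\omega$, the $A_{p}$ test for $\nu$ reduces to controlling $\bigl(\tfrac{1}{m(\mathcal{Q})}\int_{\mathcal{Q}}\omega^{1-p}dx\bigr)\bigl(\tfrac{1}{m(\mathcal{Q})}\int_{\mathcal{Q}}\omega\,dx\bigr)^{p-1}$, which is bounded by $[\omega]_{A_{1}}^{p-1}$ using the pointwise inequality $\omega(x)\geq\underset{y\in\mathcal{Q}}{\mathrm{ess\,inf}}\,\omega(y)$ almost everywhere on $\mathcal{Q}$. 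Consequently Lemma \ref{comparelem} applies to $\nu$ with some exponent $\delta>0$; applied to the level set $E_{\lambda}:=\{x\in\mathcal{Q}:f(x)-\underset{y\in\mathcal{Q}}{\mathrm{ess\,inf}}\,f(y)>\lambda\}$ together with Lemma \ref{expblo}, this produces the key estimate
\begin{equation*}
\nu(E_{\lambda})\leq C^{*}\bigg(\frac{m(E_{\lambda})}{m(\mathcal{Q})}\bigg)^{\delta}\nu(\mathcal{Q})\leq C^{*}C_{1}^{\delta}\,\nu(\mathcal{Q})\exp\bigg\{-\frac{C_{2}\delta\,\lambda}{[\omega]_{A_{1}}\underset{x\in\mathcal{Q}}{\mathrm{ess\,inf}}\,\omega(x)\cdot\|f\|_{\mathrm{BLO}(\omega)}}\bigg\}.
\end{equation*}

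Next I would insert this bound into the layer cake identity $\int_{\mathcal{Q}}[f-\underset{\mathcal{Q}}{\mathrm{ess\,inf}}\,f]^{p}\,d\nu=\int_{0}^{\infty}p\lambda^{p-1}\nu(E_{\lambda})\,d\lambda$, perform the substitution $u=C_{2}\delta\lambda/([\omega]_{A_{1}}\underset{x\in\mathcal{Q}}{\mathrm{ess\,inf}}\,\omega(x)\cdot\|f\|_{\mathrm{BLO}(\omega)})$ to reduce the resulting exponential integral to $\Gamma(p)$, and finally divide by $\omega(\mathcal{Q})$. The crucial cancellation at the end is that the pointwise bounds $\nu(\mathcal{Q})\leq(\underset{\mathcal{Q}}{\mathrm{ess\,inf}}\,\omega)^{1-p}m(\mathcal{Q})$ and $\omega(\mathcal{Q})\geq\underset{\mathcal{Q}}{\mathrm{ess\,inf}}\,\omega\cdot m(\mathcal{Q})$ give $\nu(\mathcal{Q})/\omega(\mathcal{Q})\leq(\underset{\mathcal{Q}}{\mathrm{ess\,inf}}\,\omega)^{-p}$, and this combines with the factor $([\omega]_{A_{1}}\underset{\mathcal{Q}}{\mathrm{ess\,inf}}\,\omega)^{p}$ produced by the substitution to collapse cleanly into $[\omega]_{A_{1}}^{p}$. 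Extracting the $p$-th root then recovers exactly the stated constant $[\omega]_{A_{1}}(C^{*}p\,\Gamma(p))^{1/p}C_{1}^{\delta/p}/(C_{2}\delta)$.

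The main obstacle I anticipate is bookkeeping rather than conceptual: one must track each appearance of $[\omega]_{A_{1}}$ and of the local infimum $\underset{\mathcal{Q}}{\mathrm{ess\,inf}}\,\omega$ as they propagate through the $A_{p}$-constant of $\nu$, the comparison lemma, the gamma-function change of variables, and the ratio $\nu(\mathcal{Q})/\omega(\mathcal{Q})$, so that the dependence on the local infimum cancels exactly and leaves the single power of $[\omega]_{A_{1}}$ recorded in the statement. The only genuinely nontrivial analytic ingredient is the weighted John--Nirenberg inequality (Lemma \ref{expblo}), which has already been established in the excerpt.
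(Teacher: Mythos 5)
Your proposal is correct and follows essentially the same route as the paper's own proof: Hölder for the lower bound, and for the upper bound the combination of Lemma \ref{expblo}, Lemma \ref{comparelem} applied to $\nu=\omega^{1-p}\in A_p$, the layer cake formula of Lemma \ref{lem71}, the bound $\nu(\mathcal{Q})/\omega(\mathcal{Q})\leq(\mathrm{ess\,inf}_{\mathcal{Q}}\,\omega)^{-p}$, and the gamma-function substitution, with the same final cancellation of the local infimum. The only cosmetic difference is that you verify $\nu\in A_p$ directly from the $A_1$ condition rather than invoking the duality $\omega\in A_{p'}\Leftrightarrow\omega^{1-p}\in A_p$ as the paper does.
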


Now we define
\begin{equation*}
\mathrm{BLO}^p(\omega):=\Big\{f\in L^1_{\mathrm{loc}}(\mathbb R^n):
\big\|f\big\|_{\mathrm{BLO}^p(\omega)}<+\infty\Big\},\quad 1\leq p<\infty.
\end{equation*}
This result tells us that for all $1\leq p<\infty$, the spaces $\mathrm{BLO}^p(\omega)$ coincide, and the ``norms" $\|\cdot\|_{\mathrm{BLO}^p(\omega)}$ are mutually equivalent with respect to different values of $p$, provided that $\omega\in A_1$.

\begin{proof}[Proof of Theorem $\ref{weightedblomain}$]
(1) Suppose that $f\in \mathrm{BLO}^p(\omega)$ with $1<p<\infty$. By using H\"older's inequality and the fact that $1-p=-{p}/{p'}$, we obtain that for any cube $\mathcal{Q}\subset\mathbb R^n$,
\begin{align*}
&\frac{1}{\omega(\mathcal{Q})}\int_{\mathcal{Q}}\Big[f(x)-\underset{y\in\mathcal{Q}}{\mathrm{ess\,inf}}\,f(y)\Big]\,dx\nonumber\\
&=\frac{1}{\omega(\mathcal{Q})}\int_{\mathcal{Q}}\Big[f(x)-\underset{y\in\mathcal{Q}}{\mathrm{ess\,inf}}\,f(y)\Big]
\omega(x)^{-1/{p'}}\cdot\omega(x)^{1/{p'}}\,dx\nonumber\\
&\leq\frac{1}{\omega(\mathcal{Q})}\bigg(\int_{\mathcal{Q}}
\Big[f(x)-\underset{y\in\mathcal{Q}}{\mathrm{ess\,inf}}\,f(y)\Big]^p\omega(x)^{-{p}/{p'}}dx\bigg)^{1/p}
\bigg(\int_{\mathcal{Q}}\omega(x)\,dx\bigg)^{1/{p'}}\nonumber\\
&=\bigg(\frac{1}{\omega(\mathcal{Q})}\int_{\mathcal{Q}}
\Big[f(x)-\underset{y\in\mathcal{Q}}{\mathrm{ess\,inf}}\,f(y)\Big]^p\omega(x)^{1-p}\,dx\bigg)^{1/{p}}
\leq\big\|f\big\|_{\mathrm{BLO}^p(\omega)}.
\end{align*}
(2) Let $f\in \mathrm{BLO}(\omega)$ with $\omega\in A_1$. According to Lemma \ref{expblo}, there are two constants ${C}_1,{C}_2>0$ such that for any $\lambda>0$ and for any cube $\mathcal{Q}\subset\mathbb R^n$,
\begin{equation*}
\begin{split}
&m\Big(\Big\{x\in \mathcal{Q}:\Big[f(x)-\underset{y\in\mathcal{Q}}{\mathrm{ess\,inf}}\,f(y)\Big]>\lambda\Big\}\Big)\\
&\leq{C}_1 m(\mathcal{Q})\exp\bigg\{-\Big[[\omega]_{A_1}\underset{x\in\mathcal{Q}}{\mathrm{ess\,inf}}\,\omega(x)\Big]^{-1}
\frac{{C}_2\lambda}{\|f\|_{\mathrm{BLO}(\omega)}}\bigg\}.
\end{split}
\end{equation*}
By the property of $A_p$ weights, we know that for $1<p<\infty$,
\begin{equation*}
\omega\in A_{p'}\Longleftrightarrow \omega^{1-p}\in A_p.
\end{equation*}
If we set $\nu(x):=\omega(x)^{1-p}$ with $1<p<\infty$, then $\nu\in A_p$ since $\omega\in A_1\subset A_{p'}$ for $1<p<\infty$. By using Lemma \ref{comparelem}, we get
\begin{equation*}
\begin{split}
&\nu\Big(\Big\{x\in \mathcal{Q}:\Big[f(x)-\underset{y\in\mathcal{Q}}{\mathrm{ess\,inf}}\,f(y)\Big]>\lambda\Big\}\Big)\\
&\leq C^{*}\cdot{C}_1^{\delta}\nu(\mathcal{Q})
\exp\bigg\{-\Big[[\omega]_{A_1}\underset{x\in\mathcal{Q}}{\mathrm{ess\,inf}}\,\omega(x)\Big]^{-1}
\frac{{C}_2\delta\lambda}{\|f\|_{\mathrm{BLO}(\omega)}}\bigg\}.
\end{split}
\end{equation*}
This estimate, together with Lemma \ref{lem71}, gives us that for any $1<p<\infty$
\begin{align*}
&\bigg(\frac{1}{\omega(\mathcal{Q})}\int_{\mathcal{Q}}\Big[f(x)-\underset{y\in\mathcal{Q}}{\mathrm{ess\,inf}}\,f(y)\Big]^p
\nu(x)\,dx\bigg)^{1/p}\nonumber\\
&=\bigg(\frac{1}{\omega(\mathcal{Q})}\int_0^{\infty}p\lambda^{p-1}\nu\Big(\Big\{x\in \mathcal{Q}:\Big[f(x)-\underset{y\in\mathcal{Q}}{\mathrm{ess\,inf}}\,f(y)\Big]>\lambda\Big\}\Big)d\lambda\bigg)^{1/p}\nonumber\\
&\leq\bigg(C^{*}\cdot{C}_1^{\delta}\int_0^{\infty}p\lambda^{p-1}
\exp\bigg\{-\Big[[\omega]_{A_1}\underset{x\in\mathcal{Q}}{\mathrm{ess\,inf}}\,\omega(x)\Big]^{-1}
\frac{{C}_2\delta\lambda}{\|f\|_{\mathrm{BLO}(\omega)}}\bigg\}d\lambda\bigg)^{1/p}\\
&\times\left(\frac{\nu(\mathcal{Q})}{\omega(\mathcal{Q})}\right)^{1/p}.
\end{align*}
Observe that
\begin{equation*}
\nu(\mathcal{Q})^{1/p}=\bigg\{\int_{\mathcal{Q}}\frac{\omega(x)}{\omega(x)^p}dx\bigg\}^{1/p}\leq
\omega(\mathcal{Q})^{1/p}\cdot\Big[\underset{x\in\mathcal{Q}}{\mathrm{ess\,inf}}\,\omega(x)\Big]^{-1}.
\end{equation*}
This implies that
\begin{equation*}
\left(\frac{\nu(\mathcal{Q})}{\omega(\mathcal{Q})}\right)^{1/p}\leq
\Big[\underset{x\in\mathcal{Q}}{\mathrm{ess\,inf}}\,\omega(x)\Big]^{-1}.
\end{equation*}
Moreover, by making the substitution
\begin{equation*}
\mu:=\Big[[\omega]_{A_1}\underset{x\in\mathcal{Q}}{\mathrm{ess\,inf}}\,\omega(x)\Big]^{-1}
\frac{{C}_2\delta\lambda}{\|f\|_{\mathrm{BLO}(\omega)}},
\end{equation*}
we can deduce that for any cube $\mathcal{Q}$ in $\mathbb R^n$,
\begin{align*}
&\bigg(\frac{1}{\omega(\mathcal{Q})}\int_{\mathcal{Q}}\Big[f(x)-\underset{y\in\mathcal{Q}}{\mathrm{ess\,inf}}\,f(y)\Big]^p
\nu(x)\,dx\bigg)^{1/p}\nonumber\\
&\leq\Big(C^{*}\cdot{C}_1^{\delta}p\Big)^{1/p}
\Big[[\omega]_{A_1}\underset{x\in\mathcal{Q}}{\mathrm{ess\,inf}}\,\omega(x)\Big]
\frac{\|f\|_{\mathrm{BLO}(\omega)}}{{C}_2\delta}
\times\bigg(\int_0^{\infty}\mu^{p-1}e^{-\mu}\,d\mu\bigg)^{1/p}
\left(\frac{\nu(\mathcal{Q})}{\omega(\mathcal{Q})}\right)^{1/p}\nonumber\\
&\leq[\omega]_{A_1}\Big(C^{*}\cdot p\Gamma(p)\Big)^{1/p}
\cdot\frac{{C}_1^{\delta/p}}{{C}_2\delta}\big\|f\big\|_{\mathrm{BLO}(\omega)}.
\end{align*}
This gives the desired inequality. Collecting the above estimates, we conclude the proof of Theorem \ref{weightedblomain}.
\end{proof}

Let $f\in \mathrm{BMO}(\omega)$ and $\omega\in A_1$. Arguing as in the proof of Lemma \ref{expblo}, we can also prove that for every cube $\mathcal{Q}=Q(x_0,r)$ and every $\lambda>0$,
\begin{equation}\label{jlbmo}
\begin{split}
&m\Big(\Big\{x\in \mathcal{Q}:\big|f(x)-f_{\mathcal{Q}}\big|>\lambda\Big\}\Big)\\
&\leq e\cdot m(\mathcal{Q})\exp\bigg\{-\mathcal{A}_{\omega}^{-1}\cdot\frac{\lambda}{2^ne\|f\|_{\mathrm{BMO}(\omega)}}\bigg\},
\end{split}
\end{equation}
where
\begin{equation*}
\mathcal{A}_{\omega}:=[\omega]_{A_1}\underset{x\in\mathcal{Q}}{\mathrm{ess\,inf}}\,\omega(x).
\end{equation*}

Based on this, then we get
\begin{thm}\label{lastthm2}
For any $1\leq p<\infty$ and $\omega\in A_1$, define
\begin{equation*}
\big\|f\big\|_{\mathrm{BMO}^p(\omega)}:=\sup_{\mathcal{Q}\subset\mathbb R^n}
\bigg(\frac{1}{\omega(\mathcal{Q})}\int_{\mathcal{Q}}\big|f(x)-f_{\mathcal{Q}}\big|^p\omega(x)^{1-p}\,dx\bigg)^{1/p}.
\end{equation*}
When $p=1$, we simply write $\|\cdot\|_{\mathrm{BMO}^p(\omega)}=\|\cdot\|_{\mathrm{BMO}(\omega)}$. Then we have
\begin{equation*}
\big\|f\big\|_{\mathrm{BMO}^p(\omega)}\approx\big\|f\big\|_{\mathrm{BMO}(\omega)},
\end{equation*}
for each $1<p<\infty$.
\end{thm}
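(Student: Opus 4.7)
The plan is to adapt the proof of Theorem \ref{weightedblomain} essentially verbatim, with the John--Nirenberg inequality \eqref{jlbmo} playing the role that Lemma \ref{expblo} played in the BLO setting, and with the mean value $f_{\mathcal{Q}}$ replacing $\underset{y\in\mathcal{Q}}{\mathrm{ess\,inf}}\,f(y)$. The equivalence will be established by proving the two one-sided inequalities separately.

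For the easy direction $\|f\|_{\mathrm{BMO}(\omega)}\leq \|f\|_{\mathrm{BMO}^p(\omega)}$, I would fix a cube $\mathcal{Q}$ and factor $1=\omega(x)^{-1/p'}\cdot\omega(x)^{1/p'}$ inside the integral defining the left-hand side, then apply H\"older's inequality with exponents $p$ and $p'$. Using the identity $1-p=-p/p'$, the weighted $L^{p}$-piece reproduces $\|f\|_{\mathrm{BMO}^p(\omega)}$ exactly, while the factor $\big(\int_{\mathcal{Q}}\omega\big)^{1/p'}$ combines with $1/\omega(\mathcal{Q})$ to give the correct normalization. This is word-for-word the H\"older step used at the beginning of the proof of Theorem \ref{weightedblomain}.

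For the reverse inequality, the starting point is \eqref{jlbmo}, which says
\begin{equation*}
m\Big(\Big\{x\in\mathcal{Q}:|f(x)-f_{\mathcal{Q}}|>\lambda\Big\}\Big)
\leq e\cdot m(\mathcal{Q})\exp\Big\{-\mathcal{A}_{\omega}^{-1}\cdot\frac{\lambda}{2^n e\|f\|_{\mathrm{BMO}(\omega)}}\Big\}.
\end{equation*}
Since $\omega\in A_1\subset A_{p'}$, we have $\nu(x):=\omega(x)^{1-p}\in A_p$, so Lemma \ref{comparelem} upgrades this Lebesgue level-set estimate to a $\nu$-measure estimate with the same exponential decay rate (multiplied by the power $\delta$ from Lemma \ref{comparelem}). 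Then the layer-cake formula (Lemma \ref{lem71}) rewrites
\begin{equation*}
\int_{\mathcal{Q}}|f(x)-f_{\mathcal{Q}}|^p\nu(x)\,dx
=\int_0^{\infty}p\lambda^{p-1}\nu\big(\{x\in\mathcal{Q}:|f(x)-f_{\mathcal{Q}}|>\lambda\}\big)\,d\lambda,
\end{equation*}
and substituting the exponential bound reduces the problem to a gamma integral.

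Finally, the change of variables $\mu:=\mathcal{A}_{\omega}^{-1}\cdot\delta\lambda/(2^n e\|f\|_{\mathrm{BMO}(\omega)})$ introduces $\Gamma(p)=\int_0^{\infty}\mu^{p-1}e^{-\mu}\,d\mu$ and produces a factor proportional to $\mathcal{A}_{\omega}=[\omega]_{A_1}\underset{x\in\mathcal{Q}}{\mathrm{ess\,inf}}\,\omega(x)$; combining this with the elementary bound $\nu(\mathcal{Q})^{1/p}\leq \omega(\mathcal{Q})^{1/p}\big[\underset{x\in\mathcal{Q}}{\mathrm{ess\,inf}}\,\omega(x)\big]^{-1}$ used in Theorem \ref{weightedblomain} cancels the essential infimum and leaves only the $A_1$-constant and universal constants. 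The main obstacle is purely bookkeeping: there are no new analytic ingredients beyond \eqref{jlbmo}, Lemma \ref{lem71}, and Lemma \ref{comparelem}, but one must carefully verify that the $A_1$-property is what allows the exponent $\mathcal{A}_{\omega}$ inside the exponential to interact correctly with the ratio $\nu(\mathcal{Q})/\omega(\mathcal{Q})$ so that the final constant depends only on $p$, $n$, and $[\omega]_{A_1}$ (through $\delta$ in Lemma \ref{comparelem}), and not on the particular cube $\mathcal{Q}$.
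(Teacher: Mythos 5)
Your proposal matches the paper's own argument essentially verbatim: the easy direction via H\"older's inequality with the factorization $\omega(x)^{-1/p'}\cdot\omega(x)^{1/p'}$, and the reverse direction by combining the John--Nirenberg estimate \eqref{jlbmo} with Lemma \ref{comparelem} (applied to $\nu=\omega^{1-p}\in A_p$), the layer-cake formula of Lemma \ref{lem71}, and the gamma-integral substitution, exactly as in Theorem \ref{weightedblomain} with $f_{\mathcal{Q}}$ replacing the essential infimum. This is correct and is the same route the paper takes.
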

As in the proof of Theorem \ref{weightedblomain}, we can also show that
\begin{equation*}
\big\|f\big\|_{\mathrm{BMO}(\omega)}\leq\big\|f\big\|_{\mathrm{BMO}^p(\omega)}
\leq[\omega]_{A_1}\Big(C^{*}\cdot p\Gamma(p)\Big)^{1/p}
\cdot\frac{{C}_1^{\delta/p}}{{C}_2\delta}\big\|f\big\|_{\mathrm{BMO}(\omega)},
\end{equation*}
where $C^{*}$ and $\delta$ are the same as in Lemma $\ref{comparelem}$. Indeed, we obtain
\begin{equation*}
\frac{1}{\omega(\mathcal{Q})}\int_{\mathcal{Q}}\big|f(x)-f_{\mathcal{Q}}\big|\,dx
\leq\bigg(\frac{1}{\omega(\mathcal{Q})}\int_{\mathcal{Q}}\big|f(x)-f_{\mathcal{Q}}\big|^p\omega(x)^{1-p}dx\bigg)^{1/p}
\end{equation*}
by H\"{o}lder's inequality, and
\begin{equation*}
\bigg(\frac{1}{\omega(\mathcal{Q})}\int_{\mathcal{Q}}\big|f(x)-f_{\mathcal{Q}}\big|^p\omega(x)^{1-p}dx\bigg)^{1/p}
\leq[\omega]_{A_1}\Big(C^{*}\cdot p\Gamma(p)\Big)^{1/p}
\cdot\frac{{C}_1^{\delta/p}}{{C}_2\delta}\big\|f\big\|_{\mathrm{BMO}(\omega)}
\end{equation*}
by the inequality \eqref{jlbmo}, Lemmas \ref{lem71} and \ref{comparelem}. Hence, the desired result follows.

Similarly, we define
\begin{equation*}
\mathrm{BMO}^p(\omega):=\Big\{f\in L^1_{\mathrm{loc}}(\mathbb R^n):
\big\|f\big\|_{\mathrm{BMO}^p(\omega)}<+\infty\Big\},\quad 1\leq p<\infty.
\end{equation*}
Theorem 7.6 tells us that for all $1\leq p<\infty$, the spaces $\mathrm{BMO}^p(\omega)$ coincide, and the norms $\|\cdot\|_{\mathrm{BMO}^p(\omega)}$ are mutually equivalent with respect to different values of $p$, provided that $\omega\in A_1$.

As an immediate consequence of Theorem \ref{mainthm1} through Theorem \ref{mainthm3}, we have the following results.

\begin{cor}
For any $1<p<\infty$, ${f}\in \mathrm{BMO}^p(\omega)$ and $\omega\in A_1$, then $\mathcal{G}({f})$(or $\mathcal{S}({f})$) is either infinite everywhere or finite almost everywhere, and in the latter case, we have
\begin{equation*}
\big\|\mathcal{G}({f})\big\|_{\mathrm{BLO}^p(\omega)}\lesssim\big\|f\big\|_{\mathrm{BMO}^p(\omega)}
\quad \mbox{and} \quad \big\|\mathcal{S}({f})\big\|_{\mathrm{BLO}^p(\omega)}\lesssim\big\|f\big\|_{\mathrm{BMO}^p(\omega)}.
\end{equation*}
\end{cor}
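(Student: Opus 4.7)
The plan is to obtain this corollary by chaining together the two equivalence results (Theorems \ref{weightedblomain} and \ref{lastthm2}) with the previously established main theorems (Theorems \ref{mainthm1} and \ref{mainthm2}). Since the corollary merely passes from the $p=1$ norms to the $L^p$-type norms on both the domain and target side, no new harmonic analysis is required; the argument is essentially a two-sided sandwich.

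More precisely, I would proceed as follows. First, let $f \in \mathrm{BMO}^p(\omega)$ with $1<p<\infty$ and $\omega \in A_1$. By Theorem \ref{lastthm2}, the spaces $\mathrm{BMO}^p(\omega)$ and $\mathrm{BMO}(\omega) = \mathrm{BMO}^1(\omega)$ coincide and the norms are equivalent; in particular $f \in \mathrm{BMO}(\omega)$ with
\begin{equation*}
\|f\|_{\mathrm{BMO}(\omega)} \leq \|f\|_{\mathrm{BMO}^p(\omega)}
\end{equation*}
(the H\"older-inequality direction of that equivalence). Theorem \ref{mainthm1} then provides the dichotomy: $\mathcal{G}(f)$ is either infinite everywhere or finite almost everywhere on $\mathbb R^n$; moreover, in the finite case,
\begin{equation*}
\|\mathcal{G}(f)\|_{\mathrm{BLO}(\omega)} \lesssim \|f\|_{\mathrm{BMO}(\omega)}.
\end{equation*}

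Next, I would push this BLO estimate up to the $\mathrm{BLO}^p(\omega)$ scale via Theorem \ref{weightedblomain}, which furnishes
\begin{equation*}
\|\mathcal{G}(f)\|_{\mathrm{BLO}^p(\omega)} \lesssim \|\mathcal{G}(f)\|_{\mathrm{BLO}(\omega)}.
\end{equation*}
Concatenating these three inequalities yields
\begin{equation*}
\|\mathcal{G}(f)\|_{\mathrm{BLO}^p(\omega)} \lesssim \|f\|_{\mathrm{BMO}^p(\omega)},
\end{equation*}
which is the claim for $\mathcal{G}$. The argument for $\mathcal{S}(f)$ is word-for-word identical, invoking Theorem \ref{mainthm2} in place of Theorem \ref{mainthm1}; the dichotomy part transfers verbatim because it is a pointwise property of the operator, not of the ambient norm.

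There is no genuine obstacle here, since the proof is a bookkeeping exercise that relies only on previously proved results. The only point deserving a brief verification is that the implicit constants in the chain $\mathrm{BMO}^p(\omega) \hookrightarrow \mathrm{BMO}(\omega)$ and $\mathrm{BLO}(\omega) \hookrightarrow \mathrm{BLO}^p(\omega)$ depend only on $p$, $n$, and $[\omega]_{A_1}$ (and on $\psi$ through the main theorems), which is exactly what Theorems \ref{lastthm2} and \ref{weightedblomain} deliver. Consequently the final constant in the corollary is of the form $C(n,p,[\omega]_{A_1},\psi)$, and the proof is complete.
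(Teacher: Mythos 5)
Your argument is correct and is exactly what the paper intends: it states the corollary as an immediate consequence of Theorems \ref{mainthm1}--\ref{mainthm2} combined with the norm equivalences of Theorems \ref{weightedblomain} and \ref{lastthm2}, which is precisely the two-sided sandwich you carry out (using the cheap H\"older direction $\|f\|_{\mathrm{BMO}(\omega)}\leq\|f\|_{\mathrm{BMO}^p(\omega)}$ on the domain side and $\|\cdot\|_{\mathrm{BLO}^p(\omega)}\lesssim\|\cdot\|_{\mathrm{BLO}(\omega)}$ on the target side). No further comment is needed.
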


\begin{cor}
Suppose that $(\lambda-3)n>2\delta+2\gamma$. For any $1<p<\infty$, ${f}\in \mathrm{BMO}^p(\omega)$ and $\omega\in A_1$, then $\mathcal{G}^{\ast}_{\lambda}({f})$ is either infinite everywhere or finite almost everywhere, and in the latter case, we have
\begin{equation*}
\big\|\mathcal{G}^{\ast}_{\lambda}({f})\big\|_{\mathrm{BLO}^p(\omega)}\lesssim\big\|f\big\|_{\mathrm{BMO}^p(\omega)}.
\end{equation*}
\end{cor}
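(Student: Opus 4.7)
The plan is to derive this corollary directly by chaining together three results already established in the paper, namely Theorem \ref{mainthm3}, Theorem \ref{weightedblomain}, and Theorem \ref{lastthm2}. Since the hypothesis on $\lambda$ matches the hypothesis of Theorem \ref{mainthm3} verbatim, no new kernel estimates are required; the only content is translating between the $\mathrm{BMO}$/$\mathrm{BLO}$ and $\mathrm{BMO}^p$/$\mathrm{BLO}^p$ scales.

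First I would use Theorem \ref{lastthm2}. Given $f \in \mathrm{BMO}^p(\omega)$ with $\omega\in A_1$ and $1<p<\infty$, that theorem asserts the $\mathrm{BMO}^p(\omega)$ spaces all coincide and their norms are mutually equivalent with
\begin{equation*}
\|f\|_{\mathrm{BMO}(\omega)} \approx \|f\|_{\mathrm{BMO}^p(\omega)}.
\end{equation*}
In particular $f \in \mathrm{BMO}(\omega)$, so the hypothesis of Theorem \ref{mainthm3} is satisfied. Applying Theorem \ref{mainthm3} under the assumption $(\lambda-3)n > 2\delta+2\gamma$ gives the dichotomy for $\mathcal{G}^{\ast}_{\lambda}(f)$ (either infinite everywhere or finite almost everywhere), and in the latter case provides the unweighted-$p$ bound
\begin{equation*}
\|\mathcal{G}^{\ast}_{\lambda}(f)\|_{\mathrm{BLO}(\omega)} \lesssim \|f\|_{\mathrm{BMO}(\omega)}.
\end{equation*}

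Finally I would invoke Theorem \ref{weightedblomain}, which establishes the analogous equivalence on the $\mathrm{BLO}$ side: for $\omega\in A_1$ and $1<p<\infty$,
\begin{equation*}
\|\mathcal{G}^{\ast}_{\lambda}(f)\|_{\mathrm{BLO}^p(\omega)} \approx \|\mathcal{G}^{\ast}_{\lambda}(f)\|_{\mathrm{BLO}(\omega)}.
\end{equation*}
Concatenating the three equivalences/inequalities yields
\begin{equation*}
\|\mathcal{G}^{\ast}_{\lambda}(f)\|_{\mathrm{BLO}^p(\omega)} \lesssim \|\mathcal{G}^{\ast}_{\lambda}(f)\|_{\mathrm{BLO}(\omega)} \lesssim \|f\|_{\mathrm{BMO}(\omega)} \lesssim \|f\|_{\mathrm{BMO}^p(\omega)},
\end{equation*}
which is the desired estimate.

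There is essentially no main obstacle, since all the hard analytic work (the cancellation/size/smoothness estimates on $\psi$, the weighted $L^2$ boundedness of $\mathcal{G}^{\ast}_{\lambda}$, and the John--Nirenberg-type inequalities underlying the $p$-independence of the $\mathrm{BMO}^p(\omega)$ and $\mathrm{BLO}^p(\omega)$ scales) has already been carried out. The only point that deserves a brief mention is that the dichotomy ``infinite everywhere or finite a.e.'' transfers without change because it depends only on $f$ through $\mathrm{BMO}(\omega)$, and $\mathrm{BMO}^p(\omega)=\mathrm{BMO}(\omega)$ as sets by Theorem \ref{lastthm2}.
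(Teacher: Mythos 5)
Your proposal is correct and is exactly the argument the paper intends: it presents the corollary as an immediate consequence of Theorem \ref{mainthm3} combined with the $p$-independence of the $\mathrm{BMO}^p(\omega)$ and $\mathrm{BLO}^p(\omega)$ scales from Theorems \ref{lastthm2} and \ref{weightedblomain}. The chain of inequalities you write down, including the observation that the dichotomy transfers because $\mathrm{BMO}^p(\omega)=\mathrm{BMO}(\omega)$ as sets, is precisely the intended (unwritten) proof.
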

Here the implicit constant is independent of ${f}$.

\section*{Acknowledgment}
The authors were supported by a grant from Xinjiang University under the project``Real-Variable Theory of Function Spaces and Its Applications". This work was also supported by the Natural Science Foundation of China  (No.XJEDU2020Y002 and 2022D01C407).

\begin{center}
References
\end{center}

\end{document}